\newcommand\ds{\displaystyle}
\newcommand{\ihalf}[0]{{i+1/2}}
\newcommand{\imhalf}[0]{{i-1/2}}
\newcommand{\D}[0]{{\rm{d}\!}}
\newcommand{\R}{\mathbb{R}}
\newcommand{\N}[0]{\mathbb{N}}
\newcommand{\qqand}[0]{ \qquad{\text{and}} \qquad}
\renewcommand{\d}[0]{{\rm{d}}}
\newtheorem{theorem}{Theorem}[section]
\newtheorem{corollary}[theorem]{Corollary}
\newtheorem{lemma}[theorem]{Lemma}
\newtheorem{proposition}{Proposition}
\newtheorem{definition}[theorem]{Definition}
\newtheorem{rem}{Remark}
\newcommand{\mA}{\mathcal{A}}
\newcommand{\mC}{\mathcal{C}}
\newcommand{\mD}{\mathcal{D}}
\newcommand{\mE}{\mathcal{E}}
\newcommand{\mF}{\mathcal{F}}
\newcommand{\mG}{\mathcal{G}}
\newcommand{\mR}{\mathcal{R}}
\newcommand{\curlyF}{\mathcal{F}}
\newcommand{\curlyG}{\mathcal{G}}
\newcommand{\curlyX}{\mathcal{X}}
\newcommand{\frakS}{\mathfrak{S}}
\title[Convergence of a Finite Volume Scheme]{Convergence of a Finite Volume Scheme for a System of Interacting Species with Cross-Diffusion}
\author[Jos\'e A. Carrillo, Francis Filbet, and Markus Schmidtchen]{}
\subjclass[2010]{Primary: 74S10; 65M12; 92C15; Secondary: 45K05; 92D25; 47N60}
\keywords{Finite volume methods; Integro-partial differential
  equations; Population dynamics (general); Developmental biology, pattern formation;}
\email{carrillo@maths.ox.ac.uk}
\email{francis.filbet@math.univ-toulouse.fr}
\email{schmidtchen@ljll.math.upmc.fr}
\begin{document}
\maketitle

\centerline{\scshape Jos\'e A. Carrillo}
\medskip
{\footnotesize
    \centerline{Mathematical Institute, University of Oxford}
    \centerline{Oxford OX2 6GG, United Kingdom}
} 

\medskip

\centerline{\scshape Francis Filbet}
\medskip
{\footnotesize
    \centerline{Institut de Math\'ematiques de Toulouse, Universit\'e Paul Sabatier}
    \centerline{Toulouse, France}
}

\medskip

\centerline{\scshape Markus Schmidtchen}
\medskip
{\footnotesize
    \centerline{Laboratoire Jacques-Louis Lions, Sorbonne Universit\'e}
    \centerline{ 4 place Jussieu, 75005 Paris, France}
}
%%%%%%%%%%%%%%%%%%%%%%%%%%%%%%%%%%%%%%%%%%%%%%%%%%%%%%%%%%%%%%%%%%%%%%%%%%%%%%%%%%%%%%%%%%%%
%%%%%%%%%%%%%%%%%%%%%%%%%%%%%%%%%%%%%%%%%%%%%%%%%%%%%%%%%%%%%%%%%%%%%%%%%%%%%%%%%%%%%%%%%%%%

\begin{abstract}
	In this work we present the convergence of a positivity preserving semi-discrete finite volume scheme for a coupled system of two non-local partial differential equations with cross-diffusion. The key to proving the convergence result is to establish positivity in order to obtain a discrete energy estimate to obtain  compactness. We numerically observe the convergence to reference solutions with a first order accuracy in space. Moreover we recover segregated stationary states in spite of the regularising effect of the self-diffusion. However, if the self-diffusion or the cross-diffusion is strong enough, mixing occurs while both densities remain continuous.
\end{abstract}

%%%%%%%%%%%%%%%%%%%%%%%%%%%%%%%%%%%%%%%%%%%%%%%%%%%%%%%%%%%%%%%%%%%%%%%%%%%%%%%
%%%%%%%%%%%%%%%%%%%%%%%%%%%%%%%%%%%%%%%%%%%%%%%%%%%%%%%%%%%%%%%%%%%%%%%%%%%%%%%

\section{Introduction}
\label{sec:1}

In this paper we develop and analyse a numerical scheme for  the following non-local interaction system with cross-diffusion and self-diffusion
\begin{align}
\label{eq:crossdiffsystem}
	\left\{
	\begin{array}{r}
	\ds\frac{\partial \rho}{\partial t} = \ds\frac{\partial}{\partial x}\left(\rho \frac{\partial}{\partial x}
        \left(W_{11}\star\rho + W_{12}\star \eta + \nu(\rho+ \eta)\right)\,+\,
        \dfrac\epsilon2 \frac{\partial \rho^2}{\partial x} \right),
\\[1.75em]
	\ds\frac{\partial \eta}{\partial t}= \ds\frac{\partial}{\partial x}\left(\eta \frac{\partial}{\partial x}
       \left( W_{22}\star\eta + W_{21}\star \rho + \nu(\rho+ \eta)
                                            \right)  \,+\,
        \dfrac\epsilon2 \frac{\partial\eta^2}{\partial x} \right),
\end{array}\right.
\end{align}	
governing the evolution of two species $\rho$ and $\eta$ on an
interval $(a,b) \subset \R$ for $t\in[0,T)$. The system is equipped
with nonnegative initial data $\rho^0,\eta^0 \in L_+^1(a,b) \cap
L_+^\infty(a,b)$. We denote by $m_1$ the mass of $\rho_0$ and by $m_2$ the mass of $\eta_0$, respectively,
$$
m_1\,=\, \int_a^b \rho_0(x)\,\d x, \quad \text{and} \quad
m_2 \,=\, \int_a^b \eta_0(x) \,\d x.
$$

On the boundary $x=a$ and $b$, we prescribe no-flux boundary conditions
\begin{align*}
\left\{
\begin{array}{r}
\ds\rho\,\frac{\partial}{\partial x} \left(W_{11}\star\rho + W_{12}\star \eta +
  \nu(\rho+ \eta)+ \epsilon \rho\right) = 0,
\\
\,\\
\ds\eta\, \frac{\partial}{\partial x} \left(W_{22}\star\eta + W_{21}\star \rho + \nu(\rho+ \eta) + \epsilon \eta\right) = 0,
\end{array}\right.
\end{align*}
such that the total mass of each species is conserved with respect to
time $t\geq 0$. While the self-interaction potentials $W_{11}, W_{22}\in C_b^2(\R)$ 
model the interactions among individuals of the same species (also
referred to as \emph{intraspecific} interactions), the
cross-interaction potentials $W_{12},W_{21}\in C_b^2(\R)$ encode the
interactions between individuals belonging to  different species,
\emph{i.e.} \emph{interspecific} interactions. Here $C_b^2(\R)$ denotes the set of twice continuously differentiable functions on $\R$ with bounded derivatives. Notice that the convolutions $W_{ij}\star \psi$, with $\psi$ a density function defined on $[a,b]$, are defined by extending the density $\psi$ by zero outside the interval $[a,b]$. The two positive parameters $\epsilon, \nu > 0$ determine the strengths of the
self-diffusion and the cross-diffusion of both species, respectively. Nonlinear diffusion, be it self-diffusion or cross-diffusion, is biologically relevant. As a matter of fact, around the second half of the $20^{\mathrm{th}}$ century biologists found that the dispersal rate of certain insects depends on the density itself, leading to the nonlinear diffusion terms we incorporated in the model, cf. \cite{Mor50, Mor54, Ito52, Car71, GN75}. At the same time we would like to stress that the self-diffusion terms are relevant for the convergence analysis below.

It is the
interplay between the non-local interactions of both species and their
individual and joint size-exclusion, modelled by the non-linear
diffusion \cite{BGHP85, BGH87, BGH87a, HPVolume, CCVolume, BBRW17},
that leads to a large variety of behaviours including complete phase
separation or mixing of both densities in both stationary
configurations and travelling pulses \cite{BDFS, CHS17}.

While their single species counterparts have been studied quite
intensively \cite{MEK99, CMcCV03, TBL06, CCH14} and references therein,  related two-species models like the system of our interest, Eq. \eqref{eq:crossdiffsystem}, have only recently gained considerable attention \cite{DFF13, BDFS, CHS17, DFEF17, CFSS17}. One of the most striking phenomena of these interaction models with cross-diffusion is the possibility of phase separation. Since the seminal papers \cite{GP84, BGHP85} established segregation effects for the first time for the purely diffusive system corresponding to \eqref{eq:crossdiffsystem} for $W_{ij}\equiv 0$, $i,j\in\{1,2\}$ and $\epsilon=0$, many generalisations were presented. This includes reaction-(cross-)diffusion systems \cite{BDPM10, BHIM12, CFSS17} and references therein, and by adding non-local interactions \cite{BDFS, CHS17, DFEF17, BBP17} and references therein. Ref. \cite{DFEF17} have established the existence of weak solutions to a class of non-local systems under a strong coercivity assumption on the cross-diffusion also satisfied by system \eqref{eq:crossdiffsystem}.

Typical applications of these non-local models comprise many
biological contexts such cell-cell adhesion  \cite{PBSG, MT15,CCS17}, for
instance, as well as tumour models \cite{GerCha, DTGC},
but also the  formation of the characteristic stripe patterns of
zebrafish can be modelled by these non-local models \cite{VS}. Systems of this kind are truly ubiquitous in nature and we remark that `species' may not only refer to biological species but also to a much wider class of (possibly inanimate) agents such as planets, physical or chemical particles, just to name a few.  

Since system \eqref{eq:crossdiffsystem} is in conservative form a finite volume scheme is a natural choice as a numerical method. This is owing to the fact that, by construction, finite volume schemes are locally conservative: due  to the divergence theorem, the change in density on a test cell has to equal the sum of the in-flux and the out-flux of the same cell. There is a huge literature on finite volume schemes, first and foremost  \cite{EGH00}. Therein, the authors give a detailed description of the construction of such methods and address convergence issues. Schemes similar to the one proposed in Section \ref{sec:numerical_scheme} have been studied in \cite{BCF12} in the case of nonlinear degenerate diffusion equations in any dimension. A similar scheme for a  system of two coupled PDEs was proposed in \cite{CHF07}. Later, the authors in \cite{CCH15} generalised the scheme proposed in \cite{BCF12} including both local and non-local drifts. The scheme was then extended to two species in \cite{CHS17}. All the aforementioned schemes have in common that they preserve nonnegativity -- a property that is also crucial for our analysis.

Before we define the finite volume scheme we shall present a formal energy estimate for the continuous system. The main difficulty in this paper is to establish positivity  and reproducing the continuous energy estimate at the discrete level.
The remainder of the introduction is dedicated to presenting the aforementioned energy estimate. Let us consider
\begin{align*}
\ds	\frac{\d}{\d t} \int_a^b \rho \log \rho \,\d x &= \ds\int_a^b
                                                         \log \rho
                                                         \,\frac{\partial
                                                         \rho}{\partial t}  \,\d x\\
	&=\ds\int_a^b \log \rho \,\frac{\partial}{\partial x}
            \left(\rho \frac{\partial}{\partial x}\left( W_{11}\star\rho + W_{12}\star
            \eta + \nu(\rho+ \eta)+ \epsilon \rho\right)\right) \d x\\
	&= \ds- \int_a^b \rho\, \frac{\partial}{\partial x}\left(W_{11}\star\rho + W_{12}\star \eta + \nu(\rho+ \eta)+ \epsilon \rho\right)\, \frac{\partial}{\partial x}(\log \rho) \,\d x,
\end{align*}
 where the second equality holds due to the no-flux boundary conditions.
Upon rearranging we get
\begin{align*}
	\frac{\d }{\d t} \int_a^b \rho\log\rho \,\d x \,+\, \nu \, \int_a^b \frac{\partial}{\partial x} (\rho +\eta)\frac{\partial\rho}{\partial x}\,\d x  \,+\, \epsilon \int_a^b \left|\frac{\partial\rho}{\partial x}\right|^2\,\d x \,=\, -\int_a^b (W_{11}'\star\rho + W_{12}'\star \eta)  \, \frac{\partial\rho}{\partial x}\,\d x.
\end{align*}
A similar computation for $\eta$ yields
\begin{align*}
	\frac{\d }{\d t} \int_a^b \eta\log\eta \,\d x \,+\, \nu \, \int_a^b \frac{\partial}{\partial x} (\rho +\eta)\, \frac{\partial\eta}{\partial x}\,\d x  \,+\, \epsilon \, \int_a^b \left|\frac{\partial\eta}{\partial x} \right|^2\,\d x \,=\, -\int_a^b (W_{22}'\star\eta + W_{21}'\star \rho) \, \frac{\partial\eta}{\partial x}  \,\d x,
\end{align*}
whence, upon adding both, we obtain
$$
\begin{array}{rcl}
\ds	\frac{\d }{\d t} \int_a^b  \left[\rho\log\rho+\eta\log\eta\right] \,\d x
  \,+\,\ds \nu\int_a^b\left|\frac{\partial \sigma}{\partial x} \right|^2 \d
  x \,+\,
  \epsilon\,\int_a^b \left( \left|\frac{\partial\rho}{\partial x} \right|^2 \,+\, \left|\frac{\partial\eta}{\partial x} \right|^2 \right)\,\d
  x \,=\,\ds \mathcal{D}_\rho + \mathcal{D}_\eta,
\end{array}
$$
where $\sigma = \rho + \eta$ and
$$
\left\{
	\begin{array}{l}
		\mathcal{D}_\rho := -\ds\int_a^b (W_{11}'\star\rho + W_{12}'\star
  \eta) \,\frac{\partial\rho}{\partial x}  \, \d x, \\[1em]
  		\mathcal{D}_{\eta} := -\ds \int_a^b (W_{22}'\star\eta + W_{21}'\star \rho) \, \frac{\partial\eta}{\partial x} \,  \d x,
	\end{array}
\right.
$$
denote the advective parts associated to $\rho$ and $\eta$, respectively. The advective parts can be controlled by using the weighted Young inequality to get
\begin{align*}
\ds	|\mathcal{D}_\rho|&= \ds\left|\int_a^b (W_{11}'\star\rho + W_{12}'\star \eta) \frac{\partial\rho}{\partial x}  \, \d x\right|\\[1em]
&\leq \ds\frac{1}{2\alpha}\int_a^b {|W_{11}'\star\rho + W_{12}'\star \eta|^2} \, \d x\,+\,\frac{\alpha}{2}\int_a^b\left| \frac{\partial\rho}{\partial x}  \right|^2 \, \d x,		
\end{align*}
for some $\alpha>0$. In choosing $0<\alpha<\epsilon$ we obtain
\begin{align}
	\label{eq:cts_inequality}	
	\frac{\d }{\d t} \int_a^b  \left[\rho\log\rho+\eta\log\eta\right] \,\d x
  \,+\, \nu \int_a^b \left|\frac{\partial \sigma}{\partial x} \right|^2\,\d x \,+\, \left(\epsilon-\frac\alpha2\right)\,\int_a^b \left(\left|\frac{\partial\rho}{\partial x}\right|^2\,+\,\left|\frac{\partial\eta}{\partial x}\right|^2\right)\,\d x \,\leq\, \frac{C_\rho + C_\eta}{2\alpha},
\end{align}
where $C_\rho = \|W_{11}'\star\rho + W_{12}'\star \eta\|_{L^2}^2$ and $C_\eta = \|W_{22}'\star\eta + W_{21}'\star \rho\|_{L^2}^2$.
From the last line, Eq. \eqref{eq:cts_inequality}, we may deduce bounds on the gradient of each species as well as on their sum. As mentioned above the crucial ingredient for this estimate is the positivity of solutions.\\

The rest of this paper is organised as follows. In the subsequent section we present a semi-discrete finite volume approximation of system \eqref{eq:crossdiffsystem} and we present the main result, Theorem \ref{thm:convergence}. Section \ref{sec:apriori_estimates} is dedicated to establishing positivity and to the derivation of a priori estimates. In Section \ref{sec:convergence} we obtain compactness, pass to the limit, and identify the limiting functions as weak solutions to system \eqref{eq:crossdiffsystem}. We conclude the paper with a numerical exploration in Section \ref{sec:numerical_results}. We study the numerical order of accuracy and discuss stationary states and phase segregation phenomena.

%%%%%%%%%%%%%%%%%%%%%%%%%%%%%%%%%%%%%%%%%%%%%%%%%%%%%%%%%%%%%%%%%%%%%%%%%%%%%%%%%%%%%%%%%%%%
%%%%%%%%%%%%%%%%%%%%%%%%%%%%%%%%%%%%%%%%%%%%%%%%%%%%%%%%%%%%%%%%%%%%%%%%%%%%%%%%%%%%%%%%%%%%
%%%%%%%%%%%%%%%%%%%%%%%%%%%%%%%%%%%%%%%%%%%%%%%%%%%%%%%%%%%%%%%%%%%%%%%%%%%%%%%%%%%%%%%%%%%%
\section{Numerical scheme and main result}
\label{sec:numerical_scheme}
In this section we introduce the semi-discrete finite volume scheme for system \eqref{eq:crossdiffsystem}. To begin with, let us introduce our notion of weak solutions.
\begin{definition}[Weak solutions.] 
\label{def:WeakSolution}
A couple of functions $(\rho,\eta)$ $\in $ $L^2(0,T;H^1(a,b))^2$ is a
weak solution to system \eqref{eq:crossdiffsystem} if it satisfies
\begin{subequations}
	\begin{align}
	\begin{split}
		-\int_a^b \rho_0 \,\varphi(0,\cdot) \,\d x \,=\! \int_0^T
                \int_a^b \left[\rho\, \left(\frac{\partial \varphi}{\partial t}  \,+\, \left(-\nu \frac{\partial \sigma}{\partial x}
		\,+\,\frac{\partial V_1}{\partial
                  x} \right)\,\frac{\partial \varphi}{\partial x} \right) \,+\,\frac{\epsilon}{2}\, \rho^2\, \frac{\partial^2 \varphi}{\partial x^2} \right]\,\d x \,\d t,
	\end{split}
	\end{align}
	and
	\begin{align}
	\begin{split}
		-\int_a^b \eta_0 \,\varphi(0,\cdot) \, \d x \,=\,\! \int_0^T
                \int_a^b \left[\eta \left(\frac{\partial \varphi}{\partial t}
                  + \left(-\nu \frac{\partial \sigma}{\partial x}  \,+\,
                    \frac{\partial V_2}{\partial x} \right)\,
                  \frac{\partial \varphi}{\partial x} \right) \,+\, \frac{\epsilon}{2}\,\eta^2 \, \frac{\partial^2 \varphi}{\partial x^2} \right]\,\d x \d t,
	\end{split}
	\end{align}
\end{subequations}
respectively, for any $\varphi\in C_c^\infty([0,T)\times(a,b); \R)$. Here we have set $V_k = - W_{k\, 1} \star \rho - W_{k \, 2} \star \eta$, for $k\in\{1,2\}$, and $\sigma = \rho + \eta$, as above.
\end{definition}

Notice that the existence of weak solutions to system
\eqref{eq:crossdiffsystem} will follow directly from the convergence
of the numerical solution. Indeed, our analysis relies on a compactness
argument which does not suppose a priori existence of solution to system \eqref{eq:crossdiffsystem}.

To this end we first define the following space discretisation of the domain.
\begin{definition}[\label{def:spacediscretisation}Space discretisation]
	To discretise space, we introduce the mesh 
	\begin{align*}
		\mathcal{T}:	= \bigcup_{i\in I} C_i,
	\end{align*}
	where the control volumes are given by $C_i=[x_\imhalf, x_\ihalf)$ for all $i \in I:=\{1,\ldots, N\}$. We assume that the measure of the control volumes are given by $|C_i| = \Delta x_i = x_{i+1/2} - x_{i-1/2}>0$, for all $i\in I$. Note that $x_{1/2}=a$, and $x_{N+1/2}=b$.
\end{definition}
\begin{figure}[ht!]
	\centering
	\begin{tikzpicture}
		% x-axis
		\draw[-] (1,0) -- (9,0);
		% major ticks for cell interfaces
		\foreach \x in {1,...,4}
     		\draw[-, thick] (2*\x,-0.2) -- (2*\x,0.2);

		% cell interfaces
     	\draw (2,-0.1) node[below] {$x_{i-3/2}$};
     	\draw (4,-0.1) node[below] {$x_{i-1/2}$};
     	\draw (6,-0.1) node[below] {$x_{i+1/2}$};
     	\draw (8,-0.1) node[below] {$x_{i+3/2}$};

		% minor ticks for cell centres
		\foreach \x in {1,...,3}
     		\draw[-] (2*\x+1,-0.1) -- (2*\x+1,0.1);

		% cell interfaces
     	\draw (3,-0.05) node[below] {$x_{i-1}$};
     	\draw (5,-0.05) node[below] {$x_{i}$};
     	\draw (7,-0.05) node[below] {$x_{i+1}$};

		\draw[decorate,decoration={brace}] ($(4+0.05,0.4)$) -- node[above]{$C_i$} ++(1.9,0);
		
		\draw[decorate,decoration={brace}] ($(2+0.05,0.4)$) -- node[above]{$C_{i-1}$} ++(1.9,0);
		
		\draw[decorate,decoration={brace}] ($(6.05,0.4)$) -- node[above]{$C_{i+1}$} ++(1.9,0);
		
		% axis extension and boundary conditions
		\draw[-] (-0.5,0) -- (0.5,0);
		\draw[-, dotted, thick] (0.5,0) -- (1,0);
		\draw [fill=black] (0,0) circle [radius=0.05];
		\draw (0,-0.05) node[below] {$x_{1/2} = a$};

		\draw[-, dotted, thick] (9,0) -- (9.5,0);
		\draw[->] (9.5,0) -- (10.5,0);
		\draw [fill=black] (10,0) circle [radius=0.05];
		\draw (10,-0.05) node[below] {$x_{N+1/2} = b$};
	\end{tikzpicture}
	\caption{Space discretisation according to Definition \ref{def:spacediscretisation}.}
\end{figure}
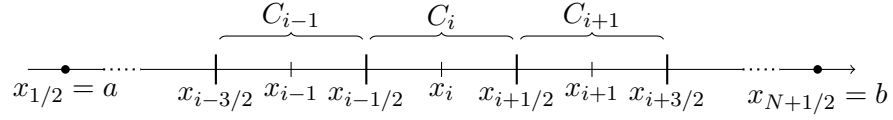

\noindent
We also define $x_i = (x_{i+1/2} + x_{i-1/2})/2$ the centre of cell $C_i$ and set $\Delta
x_{i+1/2}=x_{i+1}-x_i$ for  $i=1,\ldots,N-1$. We assume that the mesh is
regular in the sense that there exists $\xi\in (0,1)$
such that for $h := \max_{1\leq i \leq N} \{\Delta x_i\}$
\begin{equation}
\label{admissible}
	\xi\,h \,\leq \,\Delta x_{i} \,\leq\, h,
\end{equation}
and, as a consequence, $\xi\,h \,\leq \,\Delta x_{\ihalf} \,\leq\, h$, as well.

On this mesh we shall now define the semi-discrete finite volume approximation of system \eqref{eq:crossdiffsystem}.  The discretised initial data are given by the cell averages of the continuous initial data, \emph{i.e.}
\begin{align}
\label{eq:initial_data}
	\rho_i^0 := \frac{1}{\Delta x_i}\int_{C_i} \rho_0(x)\,\d x, \qquad \text{and}\qquad 	\eta_i^0 := \frac{1}{\Delta x_i} \int_{C_i} \eta_0(x)\,\d x,
\end{align}
for all $i\in I$.  Throughout, we write $\rho_i$ (resp. $\eta_i$) to denote the approximations of the two densities on the $i$-th finite volume cell, $C_i$. Next, we introduce the discrete versions of the cross-diffusion and the interaction terms. We set
\begin{align}
	\label{eq:nonlocal_part}
	\left\{
	\begin{array}{ll}
	\displaystyle
	(V_1)_i:=-\sum_{j=1}^N \Delta x_j \,\big( W_{11}^{i-j}\rho_j +  W_{12}^{i-j} \eta_j \big),\\[1.5em]
	\displaystyle	
	(V_2)_i:=-\sum_{j=1}^N  \Delta x_j \,\big( W_{22}^{i-j}\eta_j +  W_{21}^{i-j} \rho_j \big),
	\end{array}
	\right.
\end{align}
where 
\begin{align}
	\label{eq:discrete_interaction_potential}
	W_{kl}^{i-j} \,=\,\frac{1}{\Delta x_j}\, \int_{C_j} W_{kl}(|x_i-s|)\d s,
\end{align}
for $k,l=1,2$, and 
\begin{align}
	\label{eq:defU}
	U_i := -\big(\rho_i + \eta_i),
\end{align}
for the cross-diffusion term, respectively. Then the scheme reads
\begin{subequations}
\label{eq:scheme}
\begin{align}
	\label{eq:scheme_evol}
	\left\{
	\begin{array}{l}
	\displaystyle
	\frac{\d \rho_i}{\d t}(t)=\displaystyle- \frac{\mF_{\ihalf}(t) - \mF_{i-1/2}(t)}{\Delta x_i},\\[1em]
	\displaystyle
	\frac{\d \eta_i}{\d t}(t)=\displaystyle - \frac{\mG_{\ihalf}(t) - \mG_{i-1/2}(t)}{\Delta x_i},
	\end{array}
	\right.
\end{align}
for $i\in I$. Here the numerical fluxes are given by
\begin{align}
	\label{eq:numerical_fluxes}
	\left\{
	\begin{array}{l}
	\displaystyle
	\mF_{\ihalf}(t)=	\displaystyle \left[\nu\,(\d U)_{\ihalf}^+ +
                       (\d V_1)_{\ihalf}^+\right] \,\rho_i \,+\,
                       \left[\nu\,(\d U)_{\ihalf}^- + (\d
                       V_1)_{\ihalf}^-\right]\,\rho_{i+1}\\ [1.5em]
	\phantom{\mF_{\ihalf}=	}- \ds\frac{\epsilon}{2} \,\frac{\rho_{i+1}^2 - \rho_i^2}{\Delta x_{i+1/2}},\\[1.5em]
	\displaystyle
	\mG_{\ihalf}(t) =	\displaystyle \left[ \nu\,(\d U)_{\ihalf}^+
                       +(\d V_2)_{\ihalf}^+\right]\,   \eta_i \,+\,
                       \left[\nu\, (\d U)_{\ihalf}^- + (\d V_2)_{\ihalf}^-\right]\,\eta_{i+1}\\[1.5em]
	\phantom{\mG_{\ihalf} =	}- \ds\,\frac{\epsilon}{2}\, \frac{\eta_{i+1}^2 - \eta_i^2}{\Delta x_{i+1/2}},
	\end{array}
	\right.
\end{align}
for $i = 1,\ldots, N-1$, with the numerical no-flux boundary condition
\begin{align}
	\label{eq:num_noflux}
	\mF_{1/2}(t) = \mF_{N+1/2}(t) &= 0, \quad \text{and} \quad \mG_{1/2}(t) = \mG_{N+1/2}(t) = 0,	
\end{align}
\end{subequations}
where we introduced the  discrete gradient $\d u_\ihalf$ as
$$
\d u_\ihalf\,:=\,\frac{u_{i+1}-u_i}{\Delta x_{i+1/2}}.
$$ 
As usual, we use $(z)^\pm$ to denote the positive (resp. negative) part of $z$, \textit{i.e.}
\begin{align*}
	(z)^+ := \mathrm{max}(z,0), \qquad \text{and} \qquad (z)^-:=\mathrm{min}(z,0).
\end{align*}

At this stage, the numerical flux \eqref{eq:numerical_fluxes} may look
strange since 
\begin{itemize}
\item the cross-diffusion term is approximated as a convective term
  using that 
$$
\frac{\partial}{\partial x}\left( \rho \, \frac{\partial}{\partial
    x}\left( \rho + \eta \right) \right) \,=\,
\frac{\partial}{\partial x}\left( \rho \, \frac{\partial \sigma}{\partial
    x} \right)
$$
with $\sigma=\rho+\eta$ and $\frac{\partial \sigma}{\partial x}$ is considered
as a velocity field. This treatment has already been used in
\cite{BCF12} and allows to preserve the positivity of both discrete
densities $(\rho,\eta)$ (see Lemma \ref{lem:nonnegative}), which is crucial for the convergence
analysis. 
\item In this new formulation, the velocity field is split in two
  parts both treated by an upwind scheme. One part comes from the
  cross-diffusion part, and the second one comes from the non-local
  interaction fields. This splitting is crucial to recovering a consistent
  dissipative term for the discrete energy estimate corresponding to Eq.  \eqref{eq:cts_inequality}.
\end{itemize}

\begin{definition}[\label{def:ApproximateSolutions}Piecewise constant approximation]
	For a given mesh $\mathcal{T}_{h}$ we define the approximate solution to system \eqref{eq:crossdiffsystem} by
	\begin{align*}
		\rho_{h}(t,x):= \rho_i(t),\qquad \text{and} \qquad \eta_{h}(t,x):=\eta_i(t),
	\end{align*}
	for all $(t,x) \in [0,T]\times C_i$, with $i=1,\ldots,N$. Moreover, we define the following approximations of the gradients
	\begin{align*}
		\d \rho_{h}(t,x) = \frac{\rho_{i+1}-\rho_i}{\Delta x_{i+1/2}}, \qquad \mbox{and}\qquad \d \eta_{h}(t,x) = \frac{\eta_{i+1}-\eta_i}{\Delta x_{i+1/2}}
	\end{align*}
	for $(t,x)\in[0,T)\times [x_i,x_{i+1})$, for
        $i=1,\ldots,N-1$. Furthermore, in order to define $\d \rho_h$
        and $\d \eta_h$ on the whole interval $(a,b)$ we set them to zero
         on $(a,x_1)$ and  $(x_N,b$). 
\end{definition}
Notice that the discrete gradients $(\d \rho_h,\d \eta_h)$ are
piecewise constant just like $(\rho_h,\eta_h)$ however not on the same partition
of the interval  $(a,b)$.  In a similar fashion we define the piecewise constant interpolation of the discrete advection fields, \emph{i.e.},
\begin{align*}
	\d V_{k,\, h}(x)  \,=\, (\d V_{k})_\ihalf, 
\end{align*}
for all $x \in [x_i, x_{i+1})$, for $i=1,\ldots, N-1$, and zero at the boundary.

We have set out all definitions necessary to formulate the convergence of the numerical scheme \eqref{eq:scheme}.

\begin{theorem}[\label{thm:convergence}Convergence to a weak solution.]
	Let $\rho_0, \eta_0 \in L_+^1(a,b) \cap L_+^\infty(a,b)$ be
        some initial data  and $Q_T := (0,T)\times (a,b)$. Then, 
\begin{itemize}
\item [$(i)$] there exists  a nonnegative approximate solution
        $(\rho_{h}, \eta_{h})$ in the sense of Definition
        \ref{def:ApproximateSolutions};

\item[$(ii)$]  up to a subsequence, this
        approximate solution converges strongly in $L^2(Q_T)$ to
        $(\rho,\eta) \in L^2(Q_T)$, where $(\rho,\eta)$ is  a weak solution as in
        Definition \ref{def:WeakSolution}. Furthermore we have $\rho$, $\eta
        \in L^2(0,T; H^1(a,b))$;
\item[$(iii)$] as a consequence system \eqref{eq:crossdiffsystem} has a weak solution.
\end{itemize}
\end{theorem}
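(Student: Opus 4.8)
The plan is to follow the classical three-stage programme for the convergence of finite volume schemes: produce the discrete solution together with its qualitative properties, derive mesh-uniform a priori estimates, and finally extract a limit and identify it as a weak solution in the sense of Definition \ref{def:WeakSolution}. I would begin at the discrete level. For a fixed mesh, the scheme \eqref{eq:scheme} is an autonomous system of $2N$ ordinary differential equations with locally Lipschitz right-hand side, so Cauchy--Lipschitz yields a unique maximal solution. The upwind structure of the fluxes \eqref{eq:numerical_fluxes} is designed precisely so that the discrete densities cannot become negative (this is Lemma \ref{lem:nonnegative}); combined with the discrete mass conservation coming from the no-flux conditions \eqref{eq:num_noflux} --- summing \eqref{eq:scheme_evol} over $i$ telescopes the fluxes --- this provides uniform $L^1$ control and hence global-in-time existence, giving part $(i)$. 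Since the entropy density $\rho\log\rho$ requires a strictly positive argument, I would additionally record a strict positivity statement for $t>0$, which is what makes the forthcoming entropy computation legitimate.

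Second, I would reproduce the continuous estimate \eqref{eq:cts_inequality} at the discrete level. Differentiating $\sum_i \Delta x_i(\rho_i\log\rho_i+\eta_i\log\eta_i)$ in time, inserting \eqref{eq:scheme_evol}, and performing a discrete integration by parts should --- owing to the splitting of the velocity field into a cross-diffusion part and an interaction part --- generate a nonnegative dissipation essentially of the form $\nu\|\d\sigma_h\|_{L^2}^2+\epsilon(\|\d\rho_h\|_{L^2}^2+\|\d\eta_h\|_{L^2}^2)$, up to the interaction contributions. The latter are then handled exactly as in the continuous computation: because $W_{kl}'\in C_b$, a weighted Young inequality with parameter $0<\alpha<\epsilon$ absorbs a fraction of the diffusive dissipation and leaves a bounded remainder. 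Integrating in time yields bounds, uniform in $h$, on $\d\rho_h$ and $\d\eta_h$ in $L^2(Q_T)$, i.e. $L^2(0,T;H^1)$-type control.

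Third comes compactness and the passage to the limit. The gradient bounds control space translations of $(\rho_h,\eta_h)$; estimating the time increments through the scheme and its flux bounds controls time translations; a discrete Aubin--Lions / Kolmogorov--Riesz argument (as in \cite{EGH00}) then extracts a subsequence with $\rho_h\to\rho$, $\eta_h\to\eta$ strongly in $L^2(Q_T)$, while the uniform gradient bounds give $\d\rho_h\rightharpoonup\partial_x\rho$ and $\d\eta_h\rightharpoonup\partial_x\eta$ weakly in $L^2(Q_T)$, so that $\rho,\eta\in L^2(0,T;H^1(a,b))$. To identify the limit I would test the scheme against $\varphi\in C_c^\infty$, use Abel summation to transfer the discrete differences onto $\varphi$, and pass term by term: the nonlinear diffusion $\rho_h^2\to\rho^2$ follows from strong $L^2$ convergence and the $L^\infty$ bound; the nonlocal fields $(V_k)_i$ converge because convolution against $W_{kl}'$ is continuous under strong convergence; and the advective contributions are products of a strongly convergent factor with a weakly convergent one, e.g. $\rho_h\,\d\sigma_h\to\rho\,\partial_x\sigma$. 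This yields parts $(ii)$, and then $(iii)$ is immediate.

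The hardest part, I expect, is the double role of the upwind positive/negative-part splitting in \eqref{eq:numerical_fluxes}: it must preserve positivity (needed even to write the entropy) and, in the energy computation, reassemble into the clean dissipation $\nu\|\d\sigma_h\|_{L^2}^2+\epsilon(\|\d\rho_h\|_{L^2}^2+\|\d\eta_h\|_{L^2}^2)$ rather than a mere one-sided upwind proxy; estimating the error terms produced by the upwinding is the delicate algebraic core of the argument. A secondary difficulty is the time-translation estimate required for strong compactness, and, in the limit, verifying that the upwinding is asymptotically consistent, i.e. that the discrepancy between $\rho_i$ and $\rho_{i+1}$ entering the split flux becomes negligible --- which is precisely where the $L^2$ gradient bounds from the energy estimate are consumed.
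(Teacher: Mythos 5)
Your proposal is correct and follows essentially the same route as the paper: Cauchy--Lipschitz plus the upwind flux structure for nonnegativity, mass conservation and global existence; a discrete entropy estimate in which the upwinding error terms have a favourable sign and the interaction contributions are absorbed by a weighted Young inequality with $0<\alpha<\epsilon$; and then strong $L^2(Q_T)$ compactness together with weak $L^2$ convergence of the discrete gradients, followed by term-by-term identification of the limit as a weak solution. The only deviations are minor: the paper obtains strong compactness via Aubin--Lions with $BV(a,b)\hookrightarrow\hookrightarrow L^2(a,b)\hookrightarrow H^{-2}(a,b)$ and an $L^1(0,T;H^{-2})$ bound on the discrete time derivatives rather than your Kolmogorov--Riesz translation estimates (both are standard and interchangeable here), and your appeal to a uniform $L^\infty$ bound to pass to the limit in $\rho_h^2$ is neither available in the paper nor needed, since strong $L^2(Q_T)$ convergence already gives $\rho_h^2\to\rho^2$ in $L^1(Q_T)$, which suffices against a smooth test function; similarly, strict positivity is never proven nor required, the entropy computation being made rigorous through the logarithmic-mean quantity $\tilde\rho_{i+1/2}$.
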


%%%%%%%%%%%%%%%%%%%%%%%%%%%%%%%%%%%%%%%%%%%%%%%%%%%%%%%%%%%%%%%%%%%%%%%%%%%%%%%%%%%%%%%%%%%%
%%%%%%%%%%%%%%%%%%%%%%%%%%%%%%%%%%%%%%%%%%%%%%%%%%%%%%%%%%%%%%%%%%%%%%%%%%%%%%%%%%%%%%%%%%%%
%%%%%%%%%%%%%%%%%%%%%%%%%%%%%%%%%%%%%%%%%%%%%%%%%%%%%%%%%%%%%%%%%%%%%%%%%%%%%%%%%%%%%%%%%%%%
\section{\label{sec:apriori_estimates}A priori estimates}
This section is dedicated to deriving a priori estimates for our system. In order to do so we require the positivity of  approximate solutions and their conservation of mass, respectively. The following lemma guarantees these properties.
\begin{lemma}[Existence of nonnegative solutions and conservation of mass]
\label{lem:nonnegative}
\label{lem:conservation_of_m}
Assume that the initial data $(\rho_0, \eta_0)$ are non-negative. Then
there  exists a unique  nonnegative approximate
        solution $(\rho_{h}, \eta_{h})_{h>0}$  to the  scheme (\ref{eq:scheme_evol})-(\ref{eq:num_noflux}). Furthermore, the finite volume scheme conserves the initial mass of both densities.
\end{lemma}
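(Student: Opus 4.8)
The statement bundles three facts about the $2N$-dimensional ODE system \eqref{eq:scheme_evol}--\eqref{eq:numerical_fluxes}: local well-posedness, invariance of the nonnegative orthant, and conservation of the two discrete masses. The plan is to treat them in this order, since mass conservation together with positivity is precisely what rules out finite-time blow-up and upgrades a local solution to a global one. For the first point, I view $(\rho_i,\eta_i)_{i\in I}$ as a point in $\R^{2N}$ and observe that the right-hand side of \eqref{eq:scheme_evol} is a locally Lipschitz vector field: the discrete gradients $(\d U)_{\ihalf}$ and $(\d V_k)_{\ihalf}$ depend linearly on the unknowns (recall $U_i=-(\rho_i+\eta_i)$ and the convolution sums \eqref{eq:nonlocal_part}), the maps $z\mapsto(z)^{\pm}$ are $1$-Lipschitz, and the self-diffusion contributions are polynomial. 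Since sums and products of locally Lipschitz functions are locally Lipschitz, the Cauchy--Lipschitz (Picard--Lindel\"of) theorem yields a unique maximal solution on some interval $[0,T^{\ast})$.

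Second, and this is the crux, comes positivity. Expanding the conservative form $\Delta x_i\,\d\rho_i/\d t=\mF_{\imhalf}-\mF_{\ihalf}$ and collecting every term that carries the value $\rho_i$ (both the linear upwind term and the quadratic self-diffusion term), I rewrite the $i$-th equation as
\[
\frac{\d\rho_i}{\d t}\,=\,-\,b_i(t)\,\rho_i\,+\,a_i(t),
\]
where, writing $\lambda^{\pm}_{\ihalf}:=\nu(\d U)^{\pm}_{\ihalf}+(\d V_1)^{\pm}_{\ihalf}$,
\[
\Delta x_i\,a_i\,=\,\lambda^{+}_{\imhalf}\,\rho_{i-1}\,-\,\lambda^{-}_{\ihalf}\,\rho_{i+1}\,+\,\frac{\epsilon}{2}\,\frac{\rho_{i-1}^2}{\Delta x_{\imhalf}}\,+\,\frac{\epsilon}{2}\,\frac{\rho_{i+1}^2}{\Delta x_{\ihalf}}.
\]
The entire point of the upwind splitting is that $\lambda^{+}_{\imhalf}\ge0$ and $\lambda^{-}_{\ihalf}\le0$, so that $a_i\ge0$ whenever the neighbouring values $\rho_{i\pm1}$ are nonnegative (the boundary cells $i=1,N$ are identical once \eqref{eq:num_noflux} is used), while $b_i(t)$ is merely bounded on compact time intervals. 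In particular, at any instant where $\rho_{\iz}=0$ and all other components are $\ge0$, one gets $\d\rho_{\iz}/\d t=a_{\iz}\ge0$: the field points into the nonnegative orthant.

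To turn this infinitesimal statement into genuine invariance I would run a Gronwall estimate on the squared negative parts. Set
\[
\Phi(t)\,:=\,\frac12\sum_{i\in I}\Delta x_i\Big[\big((\rho_i)^{-}\big)^2+\big((\eta_i)^{-}\big)^2\Big],
\]
which is $C^1$ in time (the square removes the kink of $(\cdot)^{-}$) and vanishes at $t=0$ by nonnegativity of the data. Differentiating and substituting $\d\rho_i/\d t=-b_i\rho_i+a_i$, the diagonal terms contribute $-b_i\big((\rho_i)^{-}\big)^2$, while each cross term $\lambda^{+}_{\imhalf}\rho_{i-1}(\rho_i)^{-}$ is either nonpositive (when $\rho_{i-1}\ge0$) or bounded by $\tfrac12\big(((\rho_{i-1})^{-})^2+((\rho_i)^{-})^2\big)$ (when $\rho_{i-1}<0$), and the $\rho_{i+1}$ and self-diffusion terms obey the same dichotomy. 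On each compact subinterval of $[0,T^{\ast})$ the solution, hence the coefficients $\lambda^{\pm}$ and $b_i$, are bounded, so this gives $\d\Phi/\d t\le C\,\Phi$ and therefore $\Phi\equiv0$, i.e. $\rho_i,\eta_i\ge0$ throughout $[0,T^{\ast})$. The main obstacle is exactly this estimate: the fluxes are nonlinear and couple all cells and both species, and the potentially wrong-sign cross terms can be absorbed into $C\Phi$ only because the sign structure of the upwind splitting confines them to cells whose value is already negative, hence already measured by $\Phi$.

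Finally, conservation of mass and globality. Summing $\Delta x_i\,\d\rho_i/\d t=\mF_{\imhalf}-\mF_{\ihalf}$ over $i\in I$ telescopes to $\mF_{1/2}-\mF_{N+1/2}=0$ by the no-flux conditions \eqref{eq:num_noflux}, whence $\sum_i\Delta x_i\rho_i(t)\equiv\sum_i\Delta x_i\rho_i^0=m_1$ for all $t$, and likewise $\sum_i\Delta x_i\eta_i\equiv m_2$. Combined with the positivity just established, this yields the uniform bound $0\le\rho_i(t)\le m_1/\Delta x_i\le m_1/(\xi h)$ (and the analogue for $\eta$) through the mesh-regularity \eqref{admissible}. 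A bounded solution cannot blow up in finite time, so the maximal time satisfies $T^{\ast}>T$; the resulting nonnegative, mass-conserving approximate solution of Definition \ref{def:ApproximateSolutions} is then global and unique, which completes the argument.
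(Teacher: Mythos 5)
Your proposal is correct, and its core --- the positivity argument --- takes a genuinely different route from the paper. The paper argues by contradiction at the first time nonnegativity could fail: it defines $t^\star$ as the supremum of times on which all components remain nonnegative, extracts a sequence $t_k\downarrow t^\star$ along which some fixed component $\rho_{j_0}(t_k)<0$, and inspects the sign of $\frac{\d \rho_{j_0}}{\d t}(t^\star)$ using the same decomposition $A_i\rho_i+B_i\rho_{i+1}+C_i\rho_{i-1}$ with $B_i,C_i\ge 0$ that underlies your splitting $-b_i\rho_i+a_i$; this forces a case distinction (some neighbour strictly positive versus $\rho_{j_0-1}(t^\star)=\rho_{j_0}(t^\star)=\rho_{j_0+1}(t^\star)=0$), and the degenerate case is disposed of by invoking uniqueness of the ODE flow. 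Your Gronwall--Stampacchia estimate on $\Phi(t)=\frac12\sum_i\Delta x_i\big[((\rho_i)^-)^2+((\eta_i)^-)^2\big]$ handles all configurations uniformly: the wrong-sign couplings occur only at cells whose neighbours are already negative, hence already charged to $\Phi$, so the all-zero configuration never needs separate treatment. What your approach buys is a quantitative, case-free invariance proof; what the paper's buys is brevity, with no need to verify that $\Phi$ is $C^1$ (via $\frac{\d}{\d z}\frac12((z)^-)^2=(z)^-$) or to track constants. Two small remarks: your claim that the self-diffusion cross terms ``obey the same dichotomy'' undersells them, since $\frac{\epsilon}{2}\,\rho_{i\pm1}^2\,(\rho_i)^-/\Delta x_{i\pm1/2}\le 0$ unconditionally, so no dichotomy is needed there; and your Gronwall constant must absorb both $\sup_{[0,T]}|\nu(\d U)^{\pm}_{\ihalf}+(\d V_1)^{\pm}_{\ihalf}|$ and the mesh ratios $\Delta x_i/\Delta x_{i\pm1}\le\xi^{-1}$ from \eqref{admissible}, which is harmless because the mesh is fixed in this lemma. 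The remaining ingredients --- local well-posedness by Cauchy--Lipschitz, mass conservation by telescoping with the no-flux conditions \eqref{eq:num_noflux}, and globality from $0\le\rho_i\le m_1/\Delta x_i$ --- coincide with the paper's proof.
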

\begin{proof}
	On the one hand we notice that the right-hand side of  (\ref{eq:scheme_evol})-(\ref{eq:numerical_fluxes})
	is  locally Lipschitz with respect to $(\rho_i,\eta_i)_{1\leq
          i\leq N}$. Hence, we may apply the Cauchy-Lipschitz theorem to obtain a unique continuously differentiable local-in-time solution.
 
	On the other hand to prove that this solution is global in
        time, we show the nonnegativity of the solution together with
        the conservation of mass and argue by contradiction. 

On a given mesh, let some initial data, $\rho_i(0), \eta_i(0)\geq 0$,
be given for $i=1, \ldots N$. We rewrite the scheme in the following way.
	\begin{align}
	\label{eq:PosSchemeRewritten}
		\frac{\d \rho_i}{\d t}(t) &= -\frac{\mF_\ihalf - \mF_\imhalf}{\Delta x_i} \,=\,\frac{1}{\Delta x_i}\big(A_i \,\rho_i \,+\, B_i \,\rho_{i+1} \,+\, C_i\, \rho_{i-1}\big),
	\end{align}
	where 
	\begin{align*}
		\left\{
		\begin{array}{l}
			A_i=\displaystyle \nu \,(\d U)_\imhalf^- \,+\,
                              (\d V_1)_\imhalf^-\,-\,\nu\,(\d
                              U)_\ihalf^+ \,-\, (\d V_1)_\ihalf^+ \,-\, 
                              \,\frac\epsilon  2 \left(\frac{\rho_i}{\Delta x_{i+1/2}} + \frac{\rho_i}{\Delta x_{i-1/2}}\right),
\\[1em]
		B_i =\displaystyle -\,\nu\,(\d U)_\ihalf^- \,-\, (\d
                      V_1)_\ihalf^- \,+\, \epsilon\,
                      \frac{\rho_{i+1}}{2\Delta x_{i+1/2}},
\\[1em]
		C_i =\displaystyle \nu \,(\d U)_\imhalf^+ \,+\, (\d V_1)_\imhalf^+ \,+\, \epsilon \frac{\rho_{i-1}}{2\Delta x_{i-1/2}}.
		\end{array}
		\right.
	\end{align*}

	Then let $t^\star\geq 0$ be the maximal time for all densities
        to remain nonnegative, \emph{i.e.}
$$
	t^\star = \sup\left\{ t \geq 0 \, |\, \rho_i(s) \geq 0,\,
          \mbox{for all } s \in [0,t], \mbox{ and } i=1,\ldots,N
\right\}.
$$

If $t^\star < \infty$, then there exists a nonincreasing sequence $(t_k)_{k\in\N}$
such that $t_k>t^\star$,  $t_k\rightarrow t^\star$ as $k\rightarrow \infty$ and there
exists $i_k\in \{1,\ldots, N\}$ verifying 
$$
\rho_{i_k}(t_k)<0,\quad \forall k\in\N.
$$
Since the index $i_k$ takes a finite number of  integer values, we can
extract a nonincreasing subsequence  of $(t_k)_{k\in\N}$
still labeled in the same manner such that there exists an index  $j_0\in
\{1,\ldots,N\}$ and 
$$
\rho_{j_0}(t_k)<0,\quad \forall k\in\N,
$$
where $t_k\rightarrow t^\star$, as $k$ goes to infinity.

Also note by continuity of $(\rho_i)_{1\leq i\leq N}$, we have that
$\rho_{i}(t^\star)\geq 0$ for any $i\in\{1,\ldots, N\}$. 

By the above computation, Eq. \eqref{eq:PosSchemeRewritten}, we see
that, if  $\rho_{j_0+1}(t^\star)>0$ or respectively $\rho_{j_0-1}(t^\star)>0$, then
either $B_{j_0}(t^\star)>0$ or  respectively $ C_{j_0}(t^\star)>0$ and
\begin{align*}
\frac{\d \rho_{j_0}}{\d t}(t^\star) & =\,\frac{1}{\Delta
                                      x_{j_0}}\left(A_{j_0}
                                      \,\rho_{j_0}(t^\star) \,+\,
                                      B_{j_0}\, \rho_{j_0+1}(t^\star)
                                      \,+\, C_{j_0}
                                      \,\rho_{j_0-1}(t^\star)\right)
\\
		                            &=\,\frac{1}{\Delta x_{j_0}}\left(B_{j_0} \,\rho_{j_0+1}(t^\star) \,+\, C_{j_0}\, \rho_{j_0-1}(t^\star)\right)
		\,>\, 0,
\end{align*}
hence there exists $\tau>0$ such that for any $t\in
[t^\star,t^\star+\tau)$, we have $\rho_{j_0}(t) > \rho_{j_0}(t^\star)=0$,
which cannot occur since $\rho_{j_0}$ is continuous and for $t_k>t^\star$,
$\rho_{j_0}(t_k) < 0$ for any $k\in\N$ with $t_k\rightarrow t^\star$
when $k$ goes to infinity.

If $\rho_{j_0-1}(t^\star)=\rho_{j_0}(t^\star)=\rho_{j_0+1}(t^\star)=0$
then by uniqueness of the solution, we have that $\rho_{j_0}\equiv 0$
for $t\geq t^\star$, which contradicts again that $\rho_{j_0}(t_k) < 0$ for any $k\in\N$ large enough.

Finally we get  the conservation of mass, 
\begin{align*}
		\ds\frac{\d}{\d t}\int_a^b \rho_{h}(t,x)\d x 
		&= \ds\sum_{i=1}^N\Delta x_i \frac{\d}{\d t}\rho_i\\
		&= \ds\sum_{i = 1}^N \Delta x_i \frac{\mF_{\ihalf} - \mF_{\imhalf}}{\Delta x_i} \,=\, \mF_{N+1/2}-\mF_{1/2} \,=\, 0,
\end{align*}
	by the no-flux condition. Analogously, the second species
        remains nonnegative and its mass is conserved as well. As a consequence of the control of the  $L^1$-norm of $(\rho_h,\eta_h)$ we can extend the local solution to a global, nonnegative solution.
\end{proof}

\noindent
Now, we are ready to study the evolution of the energy of the system on the semi-discrete level. The remaining part of this section is dedicated to proving the following lemma -- an estimate similar to \eqref{eq:cts_inequality} for the semi-discrete scheme \eqref{eq:scheme}. 

\begin{lemma}[Energy control\label{lem:entropy_control}]
Consider a solution of the semi-discrete scheme
\eqref{eq:scheme_evol}-\eqref{eq:numerical_fluxes}. Then we have
\begin{align*}
	\ds \frac{\d }{\d t} \sum_{i=1}^N\Delta x_i [\rho_i \log \rho_i + \eta_i \log\eta_i]\ds+\,\sum_{i=1}^{N-1}\!\Delta x_{i+1/2}\left[\nu\, |\d U_\ihalf|^2 
		\,+\, \frac{\epsilon}{4}\left(  |\d \rho_\ihalf|^2
		\,+\,  |\d \eta_\ihalf|^2\right)
				\right]\,\leq\, C_{\epsilon},
\end{align*}
where the constant $C_{\epsilon}>0$ is given by 
\begin{equation}
\label{def:c}
C_{\epsilon} \,=\, \frac{(b-a)}{\epsilon} \left(\,\left( \|W_{11}'\|_{L^\infty}+\|W_{21}'\|_{L^\infty} \right)^2\, m_1^2 \,+\,
\left(  \|W_{12}'\|_{L^\infty} +  \|W_{22}'\|_{L^\infty}\right)^2\, m_2^2\,\right).
\end{equation}
\end{lemma}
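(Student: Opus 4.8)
The plan is to reproduce, at the discrete level, the continuous computation that led to \eqref{eq:cts_inequality}. I would start by differentiating the discrete entropy in time, $\frac{\d}{\d t}\sum_i\Delta x_i\rho_i\log\rho_i=\sum_i\Delta x_i(\log\rho_i+1)\frac{\d\rho_i}{\d t}$. Since Lemma \ref{lem:conservation_of_m} gives conservation of mass, $\sum_i\Delta x_i\frac{\d\rho_i}{\d t}=0$, so the constant term drops and only $\sum_i\Delta x_i\log\rho_i\frac{\d\rho_i}{\d t}$ survives. Substituting \eqref{eq:scheme_evol} the factors $\Delta x_i$ cancel, and a discrete summation by parts together with the no-flux conditions \eqref{eq:num_noflux} recasts this as $\sum_{i=1}^{N-1}(\log\rho_{i+1}-\log\rho_i)\,\mF_\ihalf$, and similarly $\sum_{i=1}^{N-1}(\log\eta_{i+1}-\log\eta_i)\,\mG_\ihalf$ for the second species. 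Throughout I use the strict positivity furnished by Lemma \ref{lem:nonnegative} so that these logarithms are well defined (arguing on $\{\rho_i>0\}$, or through a regularised entropy, if the initial data vanish).

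Next I would dissect each flux \eqref{eq:numerical_fluxes} into its self-diffusion, cross-diffusion and non-local parts and estimate them separately. For the self-diffusion term $-\tfrac\epsilon2(\rho_{i+1}^2-\rho_i^2)/\Delta x_\ihalf$, the elementary convexity inequality
\[
(\log b-\log a)(b^2-a^2)\,\geq\,(b-a)^2,\qquad a,b>0,
\]
produces the genuinely dissipative contribution $-\tfrac\epsilon2\sum_i\Delta x_\ihalf|\d\rho_\ihalf|^2$. For the cross-diffusion flux I would combine the two one-sided logarithm estimates $(\log\rho_{i+1}-\log\rho_i)\rho_i\leq\rho_{i+1}-\rho_i$ and $(\log\rho_{i+1}-\log\rho_i)\rho_{i+1}\geq\rho_{i+1}-\rho_i$ with the upwind decomposition split according to the sign of $(\d U)_\ihalf$; this bounds the cross-diffusion part of the $\rho$-entropy by $\nu\sum_i(\d U)_\ihalf(\rho_{i+1}-\rho_i)$ and that of the $\eta$-entropy by $\nu\sum_i(\d U)_\ihalf(\eta_{i+1}-\eta_i)$. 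Adding the two and using $U_i=-(\rho_i+\eta_i)$, so that $(\d U)_\ihalf=-\d\sigma_\ihalf$, collapses them into exactly $-\nu\sum_i\Delta x_\ihalf|\d U_\ihalf|^2$, which is precisely the dissipation in the statement.

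The same one-sided estimates reduce the non-local part of the $\rho$-entropy to $\sum_i(\d V_1)_\ihalf(\rho_{i+1}-\rho_i)=\sum_i\Delta x_\ihalf(\d V_1)_\ihalf\,\d\rho_\ihalf$. A mean value argument on the discretised potentials \eqref{eq:discrete_interaction_potential}, using only conservation of mass, yields the uniform velocity bound $|(\d V_1)_\ihalf|\leq\|W_{11}'\|_{L^\infty}m_1+\|W_{12}'\|_{L^\infty}m_2$, and symmetrically for $(\d V_2)_\ihalf$. I would then apply a weighted Young inequality, splitting off a term $\tfrac\epsilon4\sum_i\Delta x_\ihalf|\d\rho_\ihalf|^2$ that is absorbed by the remaining half of the self-diffusion dissipation, while the complementary term, controlled through the above velocity bound and $\sum_i\Delta x_\ihalf\leq b-a$, contributes the constant $C_\epsilon$ of \eqref{def:c}. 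Collecting the $\rho$- and $\eta$-contributions and keeping the dissipative terms on the left-hand side then yields the claimed inequality.

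The hard part will be the second step: making the upwind discretisation of the fluxes compatible with the logarithmic weight. Individually, the one-sided estimates for the two upwind branches are only inequalities, and it is not obvious a priori that the two cross-diffusion contributions recombine into the clean quadratic form $-\nu|\d U_\ihalf|^2$. This works precisely because the velocity field is split, as emphasised after \eqref{eq:numerical_fluxes}, into a cross-diffusion part $\nu(\d U)_\ihalf$ and an interaction part $(\d V_1)_\ihalf$, each upwinded separately; the concavity bounds then line up with the sign of $(\d U)_\ihalf$, and the identity $(\d U)_\ihalf=-\d\sigma_\ihalf$ does the rest. A secondary technical point is ensuring the strict positivity of the discrete densities required to differentiate the entropy.
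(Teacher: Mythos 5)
Your proposal is correct and follows essentially the same route as the paper's proof: differentiation of the discrete entropy (with mass conservation killing the $+1$ term), summation by parts with the no-flux conditions, compatibility of the separately upwinded velocity fields with the logarithmic weight via concavity of the logarithm, the identity $\d U_\ihalf=-\d\sigma_\ihalf$ producing the $-\nu\,|\d U_\ihalf|^2$ dissipation, the uniform bound on $\d V_{k,\ihalf}$ from positivity and mass conservation, and a weighted Young inequality absorbing half of the self-diffusion dissipation. The only cosmetic difference is that you invoke the one-sided inequalities $\rho_i\,\D\log\rho_\ihalf\le\d\rho_\ihalf\le\rho_{i+1}\,\D\log\rho_\ihalf$ directly, whereas the paper introduces the logarithmic mean $\tilde\rho_\ihalf$ of Eq. \eqref{eq:tilde_rho_ihalf} and expresses the very same facts as sign conditions on the ``numerical artifact'' terms in Eq. \eqref{eq:numerical_artifacts}.
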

\begin{proof}
Upon using the scheme, Eq. \eqref{eq:scheme_evol}, we get
\begin{align*}
	 \ds\frac{\d}{\d t} \sum_{i=1}^N \Delta x_i\,\rho_i \log \rho_i =\ds-\sum_{i=1}^N(\mF_{\ihalf} - \mF_{i-1/2}) \log\rho_i,
\end{align*}
due to the conservation of mass, ensured by Eq. \eqref{eq:num_noflux}.
By discrete integration by parts and the no-flux condition, Eq. \eqref{eq:num_noflux}, we obtain
\begin{align*}
\ds\frac{\d}{\d t} \sum_{i=1}^N \Delta x_i\,\rho_i \log \rho_i
                &=\ds\sum_{i=1}^{N-1}  \Delta x_{i+1/2}\,  \mF_{\ihalf}\, \D\log
             \rho_{\ihalf}
\\
	&=\ds\nu\sum_{i=1}^{N-1} \Delta x_{i+1/2} \bigg(
		(\d U)_{\ihalf}^+ \rho_i \,+\,(\d
            U)_{\ihalf}^- \rho_{i+1}  \bigg)	   \rm{d}\! \log\rho_{\ihalf}
\\
&\quad+\ds\sum_{i=1}^{N-1} \Delta x_{i+1/2} \bigg( (\d V_1)_{\ihalf}^+ \rho_i + (\d
            V_1)_{\ihalf}^- \rho_{i+1}  \bigg)	   \rm{d}\! \log\rho_{\ihalf},
\\
&\quad-\ds\frac{\epsilon}{2}\,\sum_{i=1}^{N-1} \left(\rho_{i+1}^2 - \rho_i^2\right)\, 	   \rm{d}\! \log\rho_{\ihalf},
\end{align*}
where, in the last equality, we substituted the definition of the numerical flux, Eq. \eqref{eq:numerical_fluxes}.
Let us define
\begin{align}
\label{eq:tilde_rho_ihalf}
	\tilde \rho_\ihalf := \left\{
		\begin{array}{ll}
			\dfrac{\rho_{i+1} - \rho_i}{\log\rho_{i+1} - \log \rho_i}, & \text{if $\rho_i \neq \rho_{i+1}$},\\[1em]
			\dfrac{\rho_i + \rho_{i+1}}{2}, & \text{else},
		\end{array}
		\right.
\end{align}
for $i\in\{1,\ldots,N-1\}$, and note that then $\tilde \rho_\ihalf \in
[\rho_i, \rho_{i+1}]$ by concavity of the $\log$.  Here, and throughout, we use the shorthand notation $[x,y]:=[\min(x,y), \max(x,y)]$. Reordering the terms, we obtain
\begin{align}
\label{eq:someeqn}
\begin{split}
	\frac{\d}{\d t} \sum_{i=1}^N &\Delta x_i \rho_i \log \rho_i \,-\, \sum_{i=1}^{N-1} \Delta x_{i+1/2}\, \left[ \nu \,\d U_\ihalf \tilde \rho_\ihalf \,-\,\frac{\epsilon}{2}\,\d \rho_\ihalf^2 \right]\D\log\rho_\ihalf\\
	=&\,\nu\sum_{i=1}^{N-1}\Delta x_{i+1/2}\left((\d U)_{\ihalf}^+ (\rho_i - \tilde \rho_\ihalf) 
		  \,+\, (\d U)_{\ihalf}^-
                  (\rho_{i+1}-\tilde\rho_\ihalf)\right)\, \D\log\rho_{\ihalf}\\
       &+\,\sum_{i=1}^{N-1}\Delta x_{i+1/2}\left( (\d V_1)_{\ihalf}^+ (\rho_i - \tilde \rho_\ihalf) 
		  \,+\,(\d V_1)_{\ihalf}^- (\rho_{i+1}-\tilde\rho_\ihalf) \right)\,\D\log\rho_{\ihalf}\\
	&+\, \sum_{i=1}^{N-1}\Delta x_{i+1/2} \, \tilde\rho_\ihalf\,\d V_{1,\ihalf} \,\D\log\rho_{\ihalf}.
\end{split}
\end{align}
Thus, using $\tilde \rho_\ihalf \in
[\rho_i, \rho_{i+1}]$ and the monotonicity of $\log$, we note that
\begin{align}
	\label{eq:numerical_artifacts}
	\left\{
	\begin{array}{r}
	\displaystyle
	(\rho_i - \tilde \rho_\ihalf) \D\log \rho_\ihalf \big(\nu(\d U)_{\ihalf}^+ + (\d V_1)_{\ihalf}^+\big) \leq 0,\\[1em]
	\displaystyle
	(\rho_{i+1} - \tilde \rho_\ihalf) \D\log \rho_\ihalf \big(\nu (\d U)_{\ihalf}^- + (\d V_1)_{\ihalf}^-\big) \leq 0.
	\end{array}
	\right.
\end{align}
This is easy to see, for, if $\rho_i = \rho_{i+1}$, we observe $ \D\log \rho_\ihalf=0$ and Eqs. \eqref{eq:numerical_artifacts} hold with equality. In the case of $\rho_i< \rho_{i+1}$ we observe
\begin{align*}
	\underbrace{(\rho_i - \tilde\rho_\ihalf)}_{\leq  0} \underbrace{\D\log\rho_\ihalf}_{\geq 0} \underbrace{\big(\nu(\d U)_\ihalf^+ + (\d V_1)_\ihalf^+ \big)}_{\geq 0} \leq 0,
\end{align*}
while, for $\rho_i > \rho_{i+1}$ there also holds
\begin{align*}
	\underbrace{(\rho_i - \tilde\rho_\ihalf)}_{\geq  0} \underbrace{\D\log\rho_\ihalf}_{\leq 0} \underbrace{\big(\nu(\d U)_\ihalf^+ + (\d V_1)_\ihalf^+ \big)}_{\geq 0} \leq 0,
\end{align*}
whence we infer the inequality. The same argument can be applied in order to obtain the second line of Eq. \eqref{eq:numerical_artifacts}. Thus we may infer from Eq. \eqref{eq:someeqn} that
\begin{align*}
\begin{split}
	\frac{\d}{\d t} \sum_{i=1}^N &\Delta x_i \rho_i \log \rho_i -  \sum_{i=1}^{N-1} \Delta x_{i+1/2} \left(\nu \tilde \rho_\ihalf \, \d U_\ihalf \,-\,\frac{\epsilon}{2}\,\d \rho_\ihalf^2\right)\, \D\log\rho_\ihalf\\
	&\leq\sum_{i=1}^{N-1}\Delta x_{i+1/2} \,\tilde \rho_\ihalf\,\D\log\rho_{\ihalf} \,\d V_{1,\ihalf}.
\end{split}
\end{align*}
Note that the definition of $\tilde \rho_\ihalf$ in Eq. \eqref{eq:tilde_rho_ihalf}, is consistent with the case $\rho_i = \rho_{i+1}$ and there holds
\begin{align*}
	\tilde \rho_\ihalf \,\D\log\rho_\ihalf \,=\, \d \rho_\ihalf,
\end{align*}
whence we get 
\begin{align*}
\begin{split}
	\frac{\d}{\d t} \sum_{i=1}^N &\Delta x_i \,\rho_i \,\log \rho_i
        \,-\, \nu\,\sum_{i=1}^{N-1} \Delta x_{i+1/2} \,\d\rho_\ihalf\,\d
        U_\ihalf  \\
& +\,\frac{\epsilon}{2}\, \sum_{i=1}^{N-1} \Delta x_{i+1/2} \,\d \rho_\ihalf^2 \,\D\log\rho_\ihalf	\,\leq\,\sum_{i=1}^{N-1}\Delta x_{i+1/2} \,\d\rho_{\ihalf} \,\d V_{1,\ihalf}.
\end{split}
\end{align*}
Furthermore, we notice that 
$$
	\frac{1}{2}\,\d \rho_\ihalf^2 \D\log\rho_\ihalf  \,=\;  \frac{\rho_{i+1} +
          \rho_i}{2\,\tilde \rho_\ihalf} |\d\rho_{i+1/2}|^2\,\geq\,\frac{1}{2}
        |\d\rho_{i+1/2}|^2,
$$
where we employed Eq. \eqref{eq:tilde_rho_ihalf}. Hence we have 
\begin{align}
\label{eq:291116_0954}
\begin{split}
	\frac{\d}{\d t} \sum_{i=1}^N &\Delta x_i \,\rho_i \,\log \rho_i
        \,-\, \nu\,\sum_{i=1}^{N-1} \Delta x_{i+1/2} \,\d\rho_\ihalf\,\d
        U_\ihalf  \\
& +\,\frac{\epsilon}{2}\, \sum_{i=1}^{N-1} \Delta x_{i+1/2}\, |\d \rho_\ihalf|^2 	\,\leq\,\sum_{i=1}^{N-1}\Delta x_{i+1/2} \,\d\rho_{\ihalf} \,\d V_{1,\ihalf}.
\end{split}
\end{align}
A similar computation can be  applied to the second species, which  yields
\begin{align}
\label{eq:220916_1137}
\begin{split}
	\frac{\d}{\d t} \sum_{i=1}^N &\Delta x_i \,\eta_i \,\log \eta_i
        \,-\,  \nu \sum_{i=1}^{N-1} \Delta x_{i+1/2}\, \d \eta_\ihalf \,\d
        U_\ihalf \\
& +\,\frac{\epsilon}{2}\sum_{i=1}^{N-1}\Delta x_{i+1/2} \,|\d \eta_\ihalf|^2	\,\leq\,\sum_{i=1}^{N-1}\Delta x_{i+1/2} \,\d \eta_\ihalf \d V_{2,\ihalf}.
\end{split}
\end{align}
\noindent
Upon adding up equations \eqref{eq:291116_0954} and \eqref{eq:220916_1137}, we obtain
\begin{align}
\label{eq:220916_1212}
\begin{split}
	\frac{\d }{\d t}\sum_{i=1}^N &\Delta x_i\,[\rho_i \log \rho_i
        + \eta_i \log \eta_i] \,-\, \nu\,\sum_{i=1}^{N-1}\Delta
        x_{i+1/2}\,\d U_\ihalf \,\d\left(\rho+\eta\right)_\ihalf \\
	&\ds\,+\,\frac{\epsilon}{2}\,\sum_{i=1}^{N-1}\Delta x_{i+1/2}
        \,\bigg(|\d \rho_\ihalf|^2  \,+\, |\d
        \eta_\ihalf|^2 \bigg)\,\leq\, \mR_h,
\end{split}
\end{align}	
where $\mR_h$ is given by
$$
\mR_h \,=\, \sum_{i=1}^{N-1}\Delta x_{i+1/2} \,\left[ \d V_{1,\ihalf}
\,\d\rho_\ihalf \,+\, \d V_{2,\ihalf} \,\d\eta_\ihalf\right].
$$
Finally we notice  that
\begin{equation}
	\label{eq:19_11_17_0911}
	\mR_h\,\leq\,\frac{1}{2}\sum_{i=1}^{N-1} \Delta x_{i+1/2} \left[\frac{|\d V_{1,\ihalf}|^2}{\alpha} \,+\, \alpha\,|\d \rho_\ihalf|^2 \,+\, \frac{|\d V_{2,\ihalf}|^2}{\alpha} \,+\, \alpha \,|\d \eta_\ihalf|^2\right],
\end{equation} 
for any $\alpha>0$, by Young's inequality. Observing, that for $k=1,\,2$,
\begin{align*}
|\d V_{k,\ihalf}| &\leq \left|\sum_{j=1}^{N-1} \rho_j\,\int_{C_j}
  \frac{W_{k1}(x_{i+1}-y)-W_{k1}(x_{i}-y)}{\Delta x_{i+1/2}} \d
  y\right| 
\\
&\quad + \left|\sum_{j=1}^{N-1} \eta_j\,\int_{C_j}
  \frac{W_{k2}(x_{i+1}-y)-W_{k2}(x_{i}-y)}{\Delta x_{i+1/2}} \d
  y\right| 
\\
 &\leq \|W_{k1}'\|_{L^\infty} \, m_1 \,+\, \|W_{k2}'\|_{L^\infty} \, m_2
\end{align*}
and by conservation of positivity and mass, it gives for $k=1,\,2$,
\begin{eqnarray*}
\frac{1}{2\alpha}\sum_{i=1}^{N-1}\Delta x_{i+1/2} |\d V_{k,\ihalf}|^2 \,\leq\, \frac{ (b-a)}{2\,\alpha} \, \left(m_1\,\|W_{k1}'\|_{L^\infty} \, +\,  m_2\|W_{k2}'\|_{L^\infty}  \right)^2.
\end{eqnarray*}
Thus Eq. \eqref{eq:19_11_17_0911} becomes
$$
\mR_h \,\leq\, \frac{\alpha}{2}\sum_{i=1}^{N-1} \Delta x_{\ihalf}\left(|\d
  \rho_\ihalf|^2+|\d \eta_\ihalf|^2\right)  \,+ \, C_{2\alpha},
$$
where $C_{2\alpha}$ is given in Eq. \eqref{def:c}. Finally, substituting the latter
estimate into  Eq. \eqref{eq:220916_1212},  we obtain, upon using Eq. \eqref{eq:defU},
\begin{align*}
	 \frac{\d }{\d t} \sum_{i=1}^N\Delta x_i [\rho_i \log \rho_i + \eta_i \log\eta_i]
		\,&+\, \nu\,\sum_{i=1}^{N-1}\!\Delta x_{\ihalf} |\d  U_\ihalf|^2 \\
&+ \frac{\epsilon \,-\,\alpha}{2}\, \sum_{i=1}^{N-1}\!\Delta x_{\ihalf} \left( |\d \rho_\ihalf|^2\,+\, |\d \eta_\ihalf|^2\right)
				\,\leq\, C_{2\alpha},
\end{align*}
for any solution $(\rho_i)_{i\in I}, (\eta_i)_{i\in I}$ of the
semi-discrete scheme \eqref{eq:scheme}. Hence choosing
$\alpha=\epsilon/2$ concludes the proof. 
\end{proof}
\begin{corollary}[A priori bounds]
\label{cor:apriori}
	Let $(\rho_i)_{i\in I}, (\eta_i)_{i\in I}$ be solutions of the
        semi-discrete scheme \eqref{eq:scheme}. Then there exists a
        constant $C>0$ such that 
	\begin{align*}
		\int_0^T \sum_{i=1}^{N-1} \Delta x_{\ihalf} \left(\,\frac\epsilon4|\d\rho_\ihalf|^2 + \frac\epsilon4|\d\eta_\ihalf|^2 + \,\nu|\d U_\ihalf|^2 \right) \,\d
          t&\leq C.
	\end{align*}
\end{corollary}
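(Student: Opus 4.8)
The plan is to integrate the energy estimate of Lemma~\ref{lem:entropy_control} in time over $(0,T)$ and then absorb the resulting boundary-in-time contributions into a constant that does not depend on the mesh size $h$. Writing $\Phi(t) := \sum_{i=1}^N \Delta x_i \left[\rho_i(t)\log\rho_i(t) + \eta_i(t)\log\eta_i(t)\right]$ for the discrete entropy, Lemma~\ref{lem:entropy_control} states that
\begin{align*}
	\frac{\d}{\d t}\Phi(t) \,+\, \sum_{i=1}^{N-1}\Delta x_{\ihalf}\left[\nu\,|\d U_\ihalf|^2 \,+\, \frac{\epsilon}{4}\left(|\d\rho_\ihalf|^2 \,+\, |\d\eta_\ihalf|^2\right)\right] \,\leq\, C_\epsilon.
\end{align*}
Setting $c_0 := \min\{\nu, \epsilon/4\} > 0$ and integrating from $0$ to $T$, the dissipation terms can all be bundled with the common factor $c_0$, so that
\begin{align*}
	c_0 \int_0^T \sum_{i=1}^{N-1}\Delta x_{\ihalf}\left(|\d\rho_\ihalf|^2 + |\d\eta_\ihalf|^2 + |\d U_\ihalf|^2\right)\,\d t \,\leq\, C_\epsilon\,T \,+\, \Phi(0) \,-\, \Phi(T).
\end{align*}

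It then remains to bound the right-hand side uniformly in $h$, and this is where the elementary structure of $s\mapsto s\log s$ does the work. For the final-time term I would use $s\log s \geq -1/e$, valid for all $s\geq 0$; summing over the cells and recalling $\sum_i \Delta x_i = b-a$ yields $\Phi(T)\geq -2(b-a)/e$, hence $-\Phi(T)\leq 2(b-a)/e$. For the initial term, the cell averages in \eqref{eq:initial_data} satisfy $0\leq \rho_i^0 \leq \|\rho_0\|_{L^\infty}$ and $0\leq \eta_i^0 \leq \|\eta_0\|_{L^\infty}$ by Jensen's inequality, so with $M := \max\{\|\rho_0\|_{L^\infty},\|\eta_0\|_{L^\infty}\}$ and the boundedness of $s\log s$ on the fixed compact interval $[0,M]$ we obtain $\Phi(0)\leq 2(b-a)\max_{s\in[0,M]}|s\log s|$. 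Both bounds are independent of the mesh; combined with the explicit expression \eqref{def:c} for $C_\epsilon$, which involves only the masses $m_1,m_2$, the $L^\infty$ norms of the $W_{kl}'$, and $b-a$, they show that the entire right-hand side is at most a constant $C$ independent of $h$. Dividing by $c_0$ gives the assertion.

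The only genuinely delicate point is the uniformity in $h$ of the initial entropy $\Phi(0)$: one must make sure that discretising the initial data by cell averages does not inflate the entropy as the mesh is refined. This is exactly what the $L^\infty$ hypothesis on $(\rho_0,\eta_0)$ delivers, since the cell average never exceeds $\|\rho_0\|_{L^\infty}$ (resp. $\|\eta_0\|_{L^\infty}$) and $s\log s$ is continuous, hence bounded, on the fixed interval $[0,M]$. Everything else—the time integration, the lower bound $-1/e$, and the rescaling by $c_0$—is routine, so I expect no further obstruction.
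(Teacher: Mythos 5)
Your proof is correct and follows essentially the same route as the paper: integrate the energy estimate of Lemma~\ref{lem:entropy_control} in time, bound the final-time entropy from below via $s\log s\geq -1/e$, and bound the initial discrete entropy uniformly in $h$. The only (minor) difference is at the initial time: the paper controls $\Phi(0)$ by the continuous entropy $\int_a^b \rho_0|\log\rho_0|+\eta_0|\log\eta_0|\,\d x$ (using convexity of $s\log s$ and the cell-average definition \eqref{eq:initial_data}), whereas you use the $L^\infty$ bound on the initial data and boundedness of $s\log s$ on $[0,M]$ --- both are valid and mesh-independent under the stated hypotheses.
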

\begin{proof}
Using the fact that  $x \log x \geq -\log(e)/e$, \emph{i.e.} $x\log x$
is bounded from below, yields
\begin{align*}
\sum_{i\in I}	\Delta x_i \, [\rho_i \log \rho_i + \eta_i \log\eta_i]
  (t) \geq -2\frac{\log(e)}{e}\,  (b-a)=: -C_1.
\end{align*}
Hence the discrete version of the classical entropy functional is bounded from below. Therefore, we integrate
the inequality of Lemma \ref{lem:entropy_control} in time and get 
\begin{align*}
\int_0^T \sum_{i=1}^{N-1} &\Delta x_{i+1/2}\left[\nu\, |\d U_\ihalf(t)|^2 
		\,+\, \frac{\epsilon}{4}\left(  |\d \rho_\ihalf(t)|^2
		\,+\,  |\d \eta_\ihalf(t)|^2\right)
				\right]\,\d t \\
&\leq C_\epsilon\, T \,+\,C_1\,+\,\int_a^b\rho_0 |\log \rho_0| + \eta_0 |\log\eta_0|\d x,
\end{align*}
which proves the statement.
\end{proof}

Thanks to a classical discrete Poincar\'e inequality \cite[Lemma 3.7]{EGH00} and \cite{BCF}, we get
uniform $L^2$-estimates on the discrete approximation $(\rho_h,\eta_h)_{h>0}$.
\begin{lemma}
\label{lem:aprioriL2}
	Let $(\rho_i)_{i\in I}, (\eta_i)_{i\in I}$ be the numerical
        solutions obtained from scheme \eqref{eq:scheme}. Then there holds
	\begin{align*}
		\|\rho_{h}\|_{L^2(Q_T)} \,+\, \|\eta_{h}\|_{L^2(Q_T)} \leq C,
	\end{align*}
	for some constant $C>0$ independent of $h>0$.
\end{lemma}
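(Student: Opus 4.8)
The plan is to derive the $L^2(Q_T)$-bound directly from the gradient estimate of Corollary \ref{cor:apriori} by invoking a discrete Poincar\'e--Wirtinger inequality, exploiting the fact that the conservation of mass established in Lemma \ref{lem:conservation_of_m} pins the spatial average of each density to a fixed, time-independent value. The essential point is that with no-flux (Neumann-type) boundary conditions a bare discrete gradient bound cannot by itself control the $L^2$-norm, since constants have vanishing gradient; one must subtract the mean and add its contribution back separately.

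First I would record that, by Lemma \ref{lem:conservation_of_m}, the discrete means are constant in time,
$$
\bar\rho_h(t) \,:=\, \frac{1}{b-a}\sum_{i=1}^N \Delta x_i\,\rho_i(t) \,=\, \frac{m_1}{b-a}, \qquad \bar\eta_h(t) \,=\, \frac{m_2}{b-a}.
$$
Then I would apply the mean-zero version of the discrete Poincar\'e inequality to the piecewise constant function $\rho_h(t,\cdot)$, whose discrete gradient $\d\rho_h$ vanishes on $(a,x_1)\cup(x_N,b)$ by Definition \ref{def:ApproximateSolutions}. This furnishes a constant $C_P>0$, depending only on $b-a$ and on the regularity ratio $\xi$ from \eqref{admissible} and hence independent of $h$, such that
$$
\|\rho_h(t,\cdot) - \bar\rho_h(t)\|_{L^2(a,b)}^2 \,\leq\, C_P \sum_{i=1}^{N-1}\Delta x_\ihalf\,|\d\rho_\ihalf(t)|^2,
$$
and analogously for $\eta_h$; this is precisely \cite[Lemma 3.7]{EGH00} together with \cite{BCF}, adapted to the dual partition on which $\d\rho_h$ is defined.

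The remainder is routine. I would split, via the triangle inequality, $\|\rho_h\|_{L^2(a,b)}^2 \leq 2\|\rho_h - \bar\rho_h\|_{L^2(a,b)}^2 + 2(b-a)\,\bar\rho_h^2$, bounding the first summand by the Poincar\'e estimate above and the second by the fixed constant $2\,m_1^2/(b-a)$. Integrating in time over $(0,T)$ and feeding in the uniform gradient control of Corollary \ref{cor:apriori} gives
$$
\|\rho_h\|_{L^2(Q_T)}^2 \,\leq\, 2\,C_P\,C \,+\, 2\,\frac{m_1^2}{b-a}\,T,
$$
with the identical argument for $\eta_h$ completing the proof.

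The hard part will not be the structure of the argument but the verification that the Poincar\'e constant $C_P$ is genuinely independent of $h$. This requires care because the discrete gradients $(\d\rho_h,\d\eta_h)$ live on the staggered mesh $\{[x_i,x_{i+1})\}$ rather than on the primal cells $\{C_i\}$, and because they are set to zero in the two boundary strips $(a,x_1)$ and $(x_N,b)$. One must check that the standard telescoping proof of the Poincar\'e--Wirtinger inequality survives on this shifted partition, using the mesh-regularity bound \eqref{admissible} to absorb the ratios $\Delta x_i/\Delta x_\ihalf$ into a constant depending only on $\xi$; this is exactly the technical content borrowed from \cite{EGH00, BCF}.
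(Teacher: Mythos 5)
Your proposal is correct and is essentially the paper's own argument: the paper proves this lemma in one line by invoking the classical discrete Poincar\'e inequality of \cite[Lemma 3.7]{EGH00} and \cite{BCF} together with the gradient bounds of Corollary \ref{cor:apriori}, which --- given the no-flux boundary conditions --- implicitly means exactly the mean-value (Poincar\'e--Wirtinger) version combined with conservation of mass that you spell out. You have simply filled in the details (mean subtraction pinned by Lemma \ref{lem:conservation_of_m}, triangle inequality, time integration, and $h$-uniformity of the constant on the staggered mesh) that the paper delegates to the cited references.
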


%%%%%%%%%%%%%%%%%%%%%%%%%%%%%%%%%%%%%%%%%%%%%%%%%%%%%%%%%%%%%%%%%%%%%%%%%%%%
%%%%%%%%%%%%%%%%%%%%%%%%%%%%%%%%%%%%%%%%%%%%%%%%%%%%%%%%%%%%%%%%%%%%%%%%%%%%
%%%%%%%%%%%%%%%%%%%%%%%%%%%%%%%%%%%%%%%%%%%%%%%%%%%%%%%%%%%%%%%%%%%%%%%%%%%%
\section{\label{sec:convergence}Proof of Theorem \ref{thm:convergence}}
This section is dedicated to proving compactness of both species, the fluxes, and the regularising porous-medium type diffusion. Upon establishing the compactness result we identify the limits as weak solutions in the sense of Definition \ref{def:WeakSolution}.  

First by application of Lemma \ref{lem:nonnegative}, we get existence
and uniqueness  of a nonnegative  approximate solution
$(\rho_h,\eta_h)$ to
(\ref{eq:scheme_evol})-(\ref{eq:numerical_fluxes}). Hence the first
item of Theorem \ref{thm:convergence} is proven. Now let us
investigate the asymptotic $h\rightarrow 0$.

\subsection{Strong compactness of approximate solutions}
We shall now make use of the above estimates in order to obtain strong
compactness of both species, $(\rho_{h},\eta_{h})$ in $L^2(Q_T)$. 

\begin{lemma}[Strong compactness in $L^2(Q_T)$]
\label{lem:strongconvergence}
	Let $(\rho_{h}, \eta_{h})_{h>0}$ be the approximation to system \eqref{eq:crossdiffsystem} obtained by the semi-discrete scheme \eqref{eq:scheme}. Then there exist functions $\rho, \eta \in L^2(Q_T)$ such that
	\begin{align*}
		\rho_{h}\rightarrow \rho,\qquad \text{and} \qquad \eta_{h} \rightarrow \eta,
	\end{align*}
	strongly in $L^2(Q_T)$, up to a subsequence.
\end{lemma}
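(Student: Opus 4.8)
The plan is to establish strong $L^2$-compactness of $(\rho_h,\eta_h)$ via the Aubin--Lions--Simon compactness framework adapted to the semi-discrete finite volume setting, following the discrete functional analysis developed in \cite{EGH00}. The strategy is to combine the already-established uniform estimates on the discrete spatial gradients (Corollary \ref{cor:apriori} and Lemma \ref{lem:aprioriL2}) with a uniform bound on a discrete time-translate, so that a Kolmogorov--Riesz-type criterion forces relative compactness in $L^2(Q_T)$.

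First I would collect the uniform-in-$h$ estimates we already own. From Lemma \ref{lem:aprioriL2} we have that $(\rho_h,\eta_h)$ is bounded in $L^2(Q_T)$, and from Corollary \ref{cor:apriori} we have control of the discrete spatial gradients, which I would repackage as a uniform bound on space-translates: for a shift by $\xi$ one controls $\|\rho_h(\cdot,\cdot+\xi)-\rho_h\|_{L^2}$ by $|\xi|$ times the discrete $H^1$ seminorm, so the space-translate estimate degenerates linearly in $\xi$ uniformly in $h$. This handles the spatial direction of the compactness criterion. Next I would turn to time-translates. Here I would use the scheme itself, Eq.~\eqref{eq:scheme_evol}: the time derivative $\d\rho_i/\d t$ equals a discrete flux difference, so testing against $\rho_h$ (or integrating the square of the time-increment against itself) and performing discrete integration by parts converts a time-translate estimate into a pairing between the discrete gradient of the test increment and the numerical fluxes $\mF_{\ihalf}$. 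The fluxes in turn are bounded using the $L^\infty$-bounds on the interaction velocities $(\d V_k)$, the cross-diffusion velocity $(\d U)$, and the a priori $L^2$-bounds on the densities and their gradients, yielding $\|\rho_h(\cdot+\tau,\cdot)-\rho_h\|_{L^2(Q_{T-\tau})}^2 \le C\,\tau$ uniformly in $h$.

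With uniform $L^2$-boundedness plus equicontinuity of both space- and time-translates in $L^2$, the Fréchet--Kolmogorov theorem (in the Riesz--Fréchet--Kolmogorov form valid on $Q_T$) yields relative compactness of $(\rho_h)_{h>0}$ and $(\eta_h)_{h>0}$ in $L^2(Q_T)$, and hence convergent subsequences to limits $\rho,\eta\in L^2(Q_T)$. Because both families are nonnegative by Lemma \ref{lem:nonnegative}, the limits are nonnegative as well.

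\textbf{The main obstacle} I anticipate is the time-translate estimate, specifically bounding $\int_0^{T-\tau}\!\!\int_a^b |\rho_h(t+\tau,x)-\rho_h(t,x)|^2\,\d x\,\d t$ by $C\tau$ uniformly in $h$. The difficulty is that the flux $\mF_{\ihalf}$ contains the porous-medium term $\tfrac{\epsilon}{2}\,\d\rho_{\ihalf}^2 = \tfrac\epsilon2(\rho_{i+1}^2-\rho_i^2)/\Delta x_{\ihalf}$, which is \emph{quadratic} in the density; after discrete integration by parts this pairs the gradient of the time-increment with $\rho_h\,\d\rho_h$, and controlling it requires simultaneously an $L^\infty$ (or at least higher-integrability) bound on $\rho_h$ together with the $L^2$-in-time gradient bound from Corollary \ref{cor:apriori}. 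The careful bookkeeping---splitting $\mF_{\ihalf}$ into its convective and diffusive parts, invoking the uniform flux-velocity bounds for the former and the gradient $L^2$-estimate for the latter, and summing the telescoping flux differences over the time-window of length $\tau$---is where the real work lies; the rest is a routine application of the discrete Aubin--Lions machinery.
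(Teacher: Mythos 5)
Your overall strategy (Riesz--Fr\'echet--Kolmogorov via space- and time-translate estimates, in the spirit of \cite{EGH00}) is a genuinely different route from the paper, which instead invokes an Aubin--Lions lemma with the triple $BV(a,b)\hookrightarrow\hookrightarrow L^2(a,b)\hookrightarrow H^{-2}(a,b)$ and proves boundedness of $(\d\rho_h/\d t)_{h>0}$ in $L^1(0,T;H^{-2}(a,b))$ by testing the scheme against smooth test functions $\varphi\in C_c^\infty((a,b))$. The decisive advantage of the paper's choice is that the test function is fixed and smooth: after discrete integration by parts every flux term is paired with $\d\varphi_{\ihalf}$, so $\|\partial_x\varphi\|_{L^\infty}$ can be pulled out and Cauchy--Schwarz is applied between $\rho_h$ and $\d\rho_h$ (or $\d U_h$), \emph{both} of which are in $L^2(Q_T)$ by Corollary \ref{cor:apriori} and Lemma \ref{lem:aprioriL2}. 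This is exactly the structural trick your proposal lacks.

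The genuine gap is in your time-translate estimate, and it is not mere ``bookkeeping''. After discrete integration by parts, the flux $\mF_{\ihalf}$ is paired with the discrete gradient of the increment $\rho_h(t+\tau)-\rho_h(t)$, which is only bounded in $L^2$; to apply Cauchy--Schwarz you therefore need the flux itself to be bounded in (at least) $L^1(0,T;L^2(a,b))$ uniformly in $h$. Under the paper's a priori bounds this is false as stated: the porous-medium part of the flux is $\tfrac{\epsilon}{2}(\rho_{i+1}+\rho_i)\,\d\rho_{\ihalf}$ and the cross-diffusion part is $\nu\,(\d U)^{\pm}_{\ihalf}\rho_i$, and in both cases you are multiplying two functions that are each only $L^2(Q_T)$ --- the product is merely $L^1(Q_T)$. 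Note in particular that, contrary to what your proposal suggests, there is \emph{no} $L^\infty$ bound on the cross-diffusion velocity $\d U_h=-\d(\rho_h+\eta_h)$; only the interaction velocities $\d V_{k,h}$ are uniformly bounded. Your argument can be rescued, but it needs an ingredient you neither state nor prove: in one dimension the discrete $H^1$ bound of Corollary \ref{cor:apriori} combined with Lemma \ref{lem:aprioriL2} yields a discrete Sobolev embedding $\rho_h,\eta_h\in L^2(0,T;L^\infty(a,b))$ uniformly in $h$, whence both problematic products lie in $L^1(0,T;L^2(a,b))$ and the translate estimate closes --- though then with a bound of order $\tau^{1/2}$ rather than the $\tau$ you claim (which would require the flux in $L^2(Q_T)$, i.e.\ a uniform $L^\infty(Q_T)$ bound that nobody has). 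Any rate vanishing as $\tau\to0$ suffices for Kolmogorov, so the approach is viable, but as written the crucial step is missing.
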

\begin{proof}

	We invoke the compactness criterion by Aubin and Lions \cite{CJL14}.
	Accordingly, a set $P\subset L^2(0,T;B)$ is relatively compact if $P$ is bounded in $L^2(0,T;X)$ and the set of derivatives $\{\partial_t \rho \big| \rho\in P\}$ is bounded in a third space $L^1(0,T;Y)$, whenever the involved Banach spaces satisfy $X \hookrightarrow\hookrightarrow B \hookrightarrow Y$, 	\emph{i.e.} the first embedding is compact and the second one continuous. For our purpose we choose $X:=BV(a,b)$, $B:=L^2(a,b)$, and $Y:=H^{-2}(a,b)$. 
The first embedding is indeed compact,  \emph{e.g.} Ref. \cite[Theorem 10.1.4]{ABM14} and the second one is continuous. 
		
		In the second step we show the time derivatives are bounded in $L^1(0,T;H^{-2}(a,b))$. To this end, let $\varphi\in C_c^\infty((a,b))$. Throughout, we write $\langle \cdot, \cdot\rangle$ for $\langle \cdot, \cdot\rangle_{H^{-2}, H^2}$ for the dual pairing. Making use of the scheme, there holds
	\begin{align*}
	\begin{split}
		\left\langle\frac{\d  \rho_{h}}{\d t}, \varphi\right\rangle 
		&=\sum_{i=1}^N \int_{C_i} \frac{\d \rho_i }{\d t} \,\varphi \,\d x =-\sum_{i=1}^N  \frac{\mF_\ihalf - \mF_\imhalf}{\Delta x_i}  \int_{C_i}\varphi \,\d x,
	\end{split}
	\end{align*}
	having used the scheme, Eq. \eqref{eq:scheme_evol}. Next we
        set
$$
	\varphi_i :=  \frac{1}{\Delta x_i}\int_{C_i}\varphi \,\d x,
$$
perform a discrete integration by parts and use the no-flux boundary conditions, Eq. \eqref{eq:num_noflux}, to obtain
$$		
\left\langle\frac{\d \rho_{h}}{\d t} , \varphi\right\rangle 
		\,=\, \sum_{i=1}^{N-1}  \mF_\ihalf \, \left( \varphi_{i+1} -
                  \varphi_i\right).
$$
Using the definition of the numerical flux, Eq. \eqref{eq:numerical_fluxes}, we  get
	\begin{align*}
	\begin{split}
		\left\langle\frac{\d \rho_h}{\d t}, \varphi\right\rangle
                \,=\,&\sum_{i=1}^{N-1}\left[\left(\nu(\d U)_\ihalf^++(\d V_1)_\ihalf^+\right) \,\rho_i  \,+\,
                  \left(\nu(\d U)_\ihalf^- \,+\, (\d V_1)_\ihalf^-\right)\,\rho_{i+1} \right]\, \left(
                  \varphi_{i+1} -\varphi_i\right)
\\
		&- \frac{\epsilon}{2} \,\sum_{i=1}^{N-1}\frac{\rho_{i+1}^2 - \rho_i^2}{\Delta x_{i+1/2}}\, \left(
                  \varphi_{i+1} -\varphi_i\right).
	\end{split}
	\end{align*}
	Let us begin with the self-diffusion part. Using the
        Cauchy-Schwarz inequality, we estimate the discrete gradient and $\rho$
        itself by Corollary \ref{cor:apriori} and Lemma \ref{lem:aprioriL2}
	\begin{align}
		\label{eq:CS_estimate}
		\begin{split}
		\frac{\epsilon}{2}  &\int_0^T \sum_{i=1}^{N-1}\frac{\rho_{i+1}^2 - \rho_i^2}{\Delta x_{i+1/2}}\, \left(
                  \varphi_{i+1} -\varphi_i\right) \,\d t \\
		&\leq \,\frac{\epsilon}{2} \left\|\frac{\partial \varphi}{\partial x} \right\|_{L^\infty} \int_0^T \sum_{i=1}^{N-1} \Delta x_\ihalf |\d \rho_\ihalf| \, (\rho_{i+1} + \rho_i)\, \d t\\
		&\leq \,\frac{\epsilon}{2} \left\|\frac{\partial \varphi}{\partial x} \right\|_{L^\infty} \, \left(\int_0^T \sum_{i=1}^{N-1} \Delta x_\ihalf |\d \rho_\ihalf|^2 \d t\right)^{1/2} \,\left(\int_0^T  \sum_{i=1}^{N} 4\xi^{-1} \Delta x_{i} |\rho_i|^2 \d t\right)^{1/2}\\
		&\leq  C \,\|\varphi\|_{H^2(a,b)},
		\end{split}
	\end{align}
	where we used that $\varphi' \in H^1 \subset L^\infty$  and the regularity of the mesh, $\xi>0$, \emph{cf.} Eq. \eqref{admissible}.

	Next, we address the cross-diffusion and non-local interactions terms
        using the same argument. For instance for the cross-diffusive
        part, we have 
	\begin{align*}
		\nu \int_0^T &\left|\sum_{i=1}^{N-1}\left[(\d U)_\ihalf^+ \rho_i  \,+\,(\d U)_\ihalf^- \rho_{i+1}\right]\right| \, \left|
                  \varphi_{i+1} -\varphi_i\right| \,\d t \\
		&\leq \frac{2\,\nu}{\sqrt{\xi}} \,\left\|\frac{\partial \varphi}{\partial x}\right\|_{L^\infty} \left(\int_0^T \sum_{i=1}^{N-1} \Delta x_\ihalf |\d U_\ihalf|^2 \d t\right)^{1/2}   \left(\int_0^T\sum_{i=1}^{N}\Delta x_i \rho_i^2 \d t\right)^{1/2}\\
		&\leq C\, \|\varphi\|_{H^2(a,b)},
	\end{align*}
	where we used Corollary \ref{cor:apriori} and Lemma
        \ref{lem:aprioriL2} again. The non-local interaction term is
        estimated in the same way, thus there holds
	\begin{align*}
		\int_0^T \left|\left\langle \frac{\d \rho_h}{\d t}, \varphi \right\rangle\right| \d t \leq C \|\varphi\|_{H^2(a,b)}.
	\end{align*}
	By density of $C_c^\infty((a,b))$ in $H_0^{2}(a,b)$  we may infer the boundedness of $(\frac{\d\rho_h}{\d t})_{h>0}$ in $L^1(0,T; H^{-2}(a,b))$, which concludes the proof.
\end{proof}

From the latter result we can prove the convergence of the discrete advection
field $\d V_{1,h}$ and $\d V_{2,h}$ defined as in Definition \ref{def:ApproximateSolutions}.

\begin{lemma}
\label{dVh}
For any $1\leq p \leq \infty$ and $k\in\{1,2\}$, the piecewise constant approximation  $\d
V_{k,h}$ converges strongly in $L^2(0,T;L^2(a,b))$  to $-(W_{k\, 1}^\prime\star
\rho + W_{k\, 2}^\prime\star \eta)$, where $(\rho,\eta)$
corresponds to the limit obtained in Lemma \ref{lem:strongconvergence}. 
\end{lemma}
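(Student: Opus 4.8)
The plan is to reduce the statement to two independent limits: a consistency estimate comparing the discrete gradient $\d V_{k,h}$ with the exact derivative of the continuous potential, and the continuity of the convolution operators under the strong $L^2$ convergence of the densities supplied by Lemma \ref{lem:strongconvergence}. The starting point is the observation that, because $\rho_h$ and $\eta_h$ are piecewise constant, the cell quadrature in \eqref{eq:nonlocal_part}--\eqref{eq:discrete_interaction_potential} is \emph{exact}:
\[
(V_k)_i = -\sum_{j=1}^N \int_{C_j}\!\bigl(W_{k1}(|x_i-s|)\,\rho_h(s) + W_{k2}(|x_i-s|)\,\eta_h(s)\bigr)\,\d s = V_k[\rho_h,\eta_h](x_i),
\]
where $V_k[\rho,\eta] := -(W_{k1}\star\rho + W_{k2}\star\eta)$ is the continuous field. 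Hence $\d V_{k,\ihalf}$ is a genuine finite difference of the smooth function $V_k[\rho_h,\eta_h]$, and writing $V_k'[\rho,\eta] := \partial_x V_k[\rho,\eta] = -(W_{k1}'\star\rho + W_{k2}'\star\eta)$ I would split
\[
\|\d V_{k,h} - V_k'[\rho,\eta]\|_{L^2(Q_T)} \le \|\d V_{k,h} - V_k'[\rho_h,\eta_h]\|_{L^2(Q_T)} + \|V_k'[\rho_h,\eta_h] - V_k'[\rho,\eta]\|_{L^2(Q_T)}.
\]

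For the second term I would use that $W_{kl}',W_{kl}''\in L^\infty(\R)$ since $W_{kl}\in C_b^2(\R)$, together with the compact support of the densities. A Cauchy--Schwarz estimate gives $\|W_{kl}'\star\psi\|_{L^2(a,b)} \le (b-a)\,\|W_{kl}'\|_{L^\infty}\,\|\psi\|_{L^2(a,b)}$; applying this to $\psi = \rho_h-\rho$ and $\psi = \eta_h-\eta$ and integrating in time bounds the second term by a constant times $\|\rho_h-\rho\|_{L^2(Q_T)} + \|\eta_h-\eta\|_{L^2(Q_T)}$, which tends to zero along the subsequence of Lemma \ref{lem:strongconvergence}.

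For the first term I would invoke the $C_b^2$ regularity of the kernels to control the finite-difference error. Mass conservation (Lemma \ref{lem:nonnegative}) gives the uniform bound $\|\partial_{xx}V_k[\rho_h,\eta_h]\|_{L^\infty} \le \|W_{k1}''\|_{L^\infty} m_1 + \|W_{k2}''\|_{L^\infty} m_2 =: M_k$. The mean value theorem identifies $\d V_{k,\ihalf}$ with $V_k'[\rho_h,\eta_h](\xi_i)$ for some $\xi_i\in(x_i,x_{i+1})$, so for $x\in[x_i,x_{i+1})$ one has the pointwise bound $|\d V_{k,h}(x) - V_k'[\rho_h,\eta_h](x)| \le M_k |\xi_i - x| \le M_k h$. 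On the two boundary intervals $(a,x_1)$ and $(x_N,b)$, where $\d V_{k,h}$ is set to zero by Definition \ref{def:ApproximateSolutions}, the difference is merely bounded by $\|V_k'[\rho_h,\eta_h]\|_{L^\infty} \le \|W_{k1}'\|_{L^\infty}m_1 + \|W_{k2}'\|_{L^\infty}m_2$, but these intervals have measure $O(h)$. Integrating over $Q_T$ yields $\|\d V_{k,h} - V_k'[\rho_h,\eta_h]\|_{L^2(Q_T)}^2 = O(h^2) + O(h) \to 0$, which together with the previous paragraph gives the claimed strong $L^2(Q_T)$ convergence. Finally, since the energy computation already furnishes the uniform bound $\|\d V_{k,h}\|_{L^\infty(Q_T)} \le \|W_{k1}'\|_{L^\infty}m_1 + \|W_{k2}'\|_{L^\infty}m_2$, the convergence upgrades to $L^p(Q_T)$ for every $1\le p<\infty$ by interpolation between $L^2$ and $L^\infty$ (and by finiteness of $|Q_T|$ when $p<2$).

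The hard part will be the consistency step: one must genuinely use $W_{kl}\in C_b^2$ to trade the finite difference for the pointwise derivative at the cost of an $O(h)$ error, and then separately dispose of the two boundary cells where the discrete gradient is set to zero by convention. By contrast, the convolution-continuity step is routine once the strong convergence of $(\rho_h,\eta_h)$ is in hand, and the exact-quadrature identity above is what makes the whole argument clean rather than requiring a separate analysis of the quadrature error in the definition of $(V_k)_i$.
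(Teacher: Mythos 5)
Your proof is correct and follows essentially the same route as the paper: the same triangle-inequality decomposition through the intermediate field $-(W_{k1}'\star\rho_h+W_{k2}'\star\eta_h)$, the same use of the strong $L^2$ convergence of $(\rho_h,\eta_h)$ plus convolution continuity for one term, and the same $O(h)$ consistency estimate (your exact-quadrature/mean-value-theorem phrasing is just a repackaging of the paper's direct Taylor estimate on the kernel difference quotients, with the same constant $\|W_{k1}''\|_{L^\infty}m_1+\|W_{k2}''\|_{L^\infty}m_2$ from mass conservation). Your explicit treatment of the measure-$O(h)$ boundary cells and the $L^p$ interpolation remark are slightly more careful than the paper's corresponding passing comments, but the argument is the same.
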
 
\begin{proof}
Let $k\in\{1,2\}$. For each $i=0,\ldots,N-1$, and $x\in [x_i, x_{i+1})$ we have
\begin{align*}
\d V_{k,\, h}(x)  \,=\, (\d V_{k})_\ihalf &= -\sum_{j=1}^{N} \int_{C_j} \frac{ W_{k\,1}(x_{i+1}-y)-
  W_{k\,1}(x_i-y)}{\Delta x_\ihalf} \,\rho_j\,\d y
\\
&\quad- \sum_{j=1}^{N} \int_{C_j} \frac{W_{k\,2}(x_{i+1}-y)- W_{k\, 2}(x_i-y)}{\Delta x_\ihalf}
\,\eta_j\,\d y.
\end{align*}
We define $V_{k, h}'$ and $V_{k}'$  as 
$$
\left\{
\begin{array}{ll} 
\ds {V}_{k, h}'(x) &\ds \,:=\, - W_{k\,1}^\prime\star \rho_h \,-\,   W_{k\,2}^\prime\star \eta_h,
\\ \, \\
\ds V_{k}'(x)  & \ds \,:=\, -W_{k\,1}^\prime\star \rho \, - \,   W_{k\,2}^\prime\star \eta.
\end{array}\right.
$$
On the one hand from the strong convergence of $(\rho_h,\eta_h)$ to $(\rho,\eta)$
in $L^2(0,T;L^2(a,b))$ and the convolution product's properties, we obtain 
\begin{equation}
\|  V_{k,
  h}'- V_{k}'\|_{L^2(0,T;L^2(a,b))}\rightarrow 0, \textrm{\,when\, }h\rightarrow
0.
\label{res:1}
\end{equation} 
On the other hand, we have for any $x\in [x_i,x_{i+1})$ 
\begin{align*}
|\d V_{k,\, h}(x)-V_{k,\, h}^\prime(x)|   &\leq \sum_{j=1}^{N} \int_{C_j} \left|\frac{ W_{k\,1}(x_{i+1}-y)-
  W_{k\,1}(x_i-y)}{\Delta x_\ihalf}-W_{k\,1}^\prime(x-y)\right| \,\rho_j\,\d y,
\\
&\quad+ \sum_{j=1}^{N} \int_{C_j} \left|\frac{W_{k\,2}(x_{i+1}-y)- W_{k\, 2}(x_i-y)}{\Delta x_\ihalf}
-W_{k\,2}^\prime(x-y)\right| \,\eta_j\,\d y,
\\
&\leq  \left( \,\|W_{k\,1}^{\prime\prime}\|_{L^\infty} m_1
        \,+\,
        \|W_{k\,2}^{\prime\prime}\|_{L^\infty}\,m_2\;\right) \, h,
\end{align*}
hence there exists a constant $C>0$ such that
$$
	|\d V_{k,\, h}(x)-V_{k,\, h}^\prime(x)|^2 \,\leq \, C^p \, h^2.
$$
Integrating over $x\in [x_i, x_{i+1})$ and summing over $i\in
\{1,\ldots,N-1\}$, we get that 
\begin{equation}
\|  \d V_{k,
  h}- V_{k,h}'\|_{L^2(0,T;L^2(a,b))}\rightarrow 0, \textrm{\,when\, }h\rightarrow
0.
\label{res:2}
\end{equation} 
Notice that $(x_1,x_{N})\subset (a,b)$ where $x_1\rightarrow a$ and
$x_N\rightarrow b$ as $h\rightarrow 0$. From Eqs. (\ref{res:1}) and (\ref{res:2}) we  get that 
$\|\d V_{k,h} - V_{k}^\prime\|_{L^2(0,T;L^2(a,b))}$ goes to zero as $h$
tends to zero.
\end{proof}

\subsection{Weak compactness for the discrete gradients}
In the previous section we have established the strong $L^2$-convergence of both species, $(\rho_{h})_{h>0}$ and $(\eta_{h})_{h>0}$. However, in order to be able to pass to the limit in the cross-diffusion term $\rho_{h} (\d \rho_{h} +  \d \eta_{h})$ we need to establish weak convergence in the discrete gradients in $L^2$. This is done in the following proposition.
\begin{proposition}[\label{prop:WeakConvergenceDerivatives}Weak convergence of the derivatives]
	The discrete spatial derivatives, defined in Definition
        \ref{def:ApproximateSolutions}, satisfy   $\d \beta_h$ 
        converges weakly to $\frac{\partial \beta}{\partial x}$ in $L^2(Q_T)$ and $\beta\in L^2(0,T;H^1(a,b))$, where $\beta \in \{\rho, \eta, U\}$
\end{proposition}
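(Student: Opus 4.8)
The plan is to establish weak $L^2$-convergence of the discrete gradients by exhibiting them as a bounded sequence in a Hilbert space and then identifying the weak limit as the distributional derivative of the strong limit obtained in Lemma \ref{lem:strongconvergence}. The uniform $L^2$-bound is already in hand: Corollary \ref{cor:apriori} provides
\begin{align*}
	\int_0^T \sum_{i=1}^{N-1} \Delta x_{\ihalf} \left(|\d\rho_\ihalf|^2 + |\d\eta_\ihalf|^2 + |\d U_\ihalf|^2\right)\,\d t \,\leq\, C,
\end{align*}
which is precisely $\|\d\rho_h\|_{L^2(Q_T)}^2 + \|\d\eta_h\|_{L^2(Q_T)}^2 + \|\d U_h\|_{L^2(Q_T)}^2 \leq C$ once we recall that $\d\beta_h$ is the piecewise constant function equal to $\d\beta_\ihalf$ on $[x_i,x_{i+1})$ and set to zero on the two boundary strips. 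Since bounded sequences in the Hilbert space $L^2(Q_T)$ are weakly relatively compact (Banach--Alaoglu / reflexivity), for each $\beta\in\{\rho,\eta,U\}$ we may extract a subsequence along which $\d\beta_h \rightharpoonup g_\beta$ weakly in $L^2(Q_T)$ for some limit $g_\beta$. Passing to a common subsequence handles all three simultaneously, and we note $g_U = g_\rho + g_\eta$ by linearity since $\d U_h = -(\d\rho_h + \d\eta_h)$.

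The main work is to identify $g_\beta$ with $\partial\beta/\partial x$, and this is the step I expect to be the crux. I would test the weak limit against a smooth compactly supported $\psi\in C_c^\infty(Q_T)$ and perform a discrete summation by parts, transferring the difference quotient from $\beta_h$ onto $\psi$. Concretely, for $\beta=\rho$ one writes
\begin{align*}
	\int_0^T \int_a^b \d\rho_h\,\psi\,\d x\,\d t \,=\, \int_0^T \sum_{i=1}^{N-1} \Delta x_\ihalf\, \frac{\rho_{i+1}-\rho_i}{\Delta x_\ihalf}\,\tilde\psi_\ihalf\,\d t \,=\, -\int_0^T \sum_{i} \rho_i\,(\tilde\psi_\ihalf - \tilde\psi_{\imhalf})\,\d t + \text{(boundary)},
\end{align*}
where $\tilde\psi_\ihalf$ denotes a suitable cell-edge evaluation or average of $\psi$. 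The quantity $\tilde\psi_\ihalf - \tilde\psi_{\imhalf}$ is a discrete derivative of the smooth test function and, by Taylor expansion using $\psi\in C^\infty$, converges uniformly to $\Delta x_i\,\partial_x\psi$; together with the strong convergence $\rho_h\to\rho$ in $L^2(Q_T)$ from Lemma \ref{lem:strongconvergence}, the right-hand side converges to $-\int_0^T\int_a^b \rho\,\partial_x\psi\,\d x\,\d t$. The boundary contributions vanish because $\psi$ has compact support in $(a,b)$ while $x_1\to a$ and $x_N\to b$ as $h\to 0$, so for $h$ small the support of $\psi$ is contained in the interior where $\d\rho_h$ is the genuine difference quotient. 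Comparing the two sides yields $\int_{Q_T} g_\rho\,\psi = -\int_{Q_T}\rho\,\partial_x\psi$ for all test functions $\psi$, which is exactly the statement $g_\rho = \partial\rho/\partial x$ in the distributional sense.

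Finally, because $g_\rho\in L^2(Q_T)$ is the distributional spatial derivative of $\rho$, we conclude $\rho\in L^2(0,T;H^1(a,b))$, and likewise for $\eta$; the claim for $U$ then follows from $U = -(\rho+\eta)\in L^2(0,T;H^1(a,b))$ with $\partial_x U = -(\partial_x\rho + \partial_x\eta) = g_U$. The same argument applied verbatim to $\eta$ completes all three cases. The only delicate points are the careful bookkeeping of the two boundary strips $(a,x_1)$ and $(x_N,b)$ where the discrete gradient was set to zero, and the uniform consistency estimate $|\tilde\psi_\ihalf - \tilde\psi_{\imhalf} - \Delta x_i\,\partial_x\psi(x_i)| = O(h^2)$ controlling the error in transferring the difference quotient; both are routine given the mesh regularity \eqref{admissible} and the smoothness of $\psi$.
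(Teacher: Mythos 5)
Your overall strategy --- uniform $L^2$ bound from Corollary \ref{cor:apriori}, weak compactness in $L^2(Q_T)$, then identification of the weak limit by discrete summation by parts against $\psi\in C_c^\infty(Q_T)$ combined with the strong convergence of Lemma \ref{lem:strongconvergence} --- is exactly the paper's. The gap is in the error bookkeeping of the identification step, and it sits precisely where the paper does its real work. Your two displayed equalities and your consistency claim cannot both hold for any single choice of $\tilde\psi_\ihalf$. If $\tilde\psi_\ihalf$ is the cell average of $\psi$ over $[x_i,x_{i+1})$, the first equality is exact, but the claimed uniform consistency $|\tilde\psi_\ihalf-\tilde\psi_\imhalf-\Delta x_i\,\partial_x\psi(x_i)|=O(h^2)$ is \emph{false} on the nonuniform meshes allowed by \eqref{admissible}: writing $m_\ihalf=(x_i+x_{i+1})/2$, one has $\tilde\psi_\ihalf-\tilde\psi_\imhalf=\partial_x\psi(x_i)\,(m_\ihalf-m_\imhalf)+O(h^2)$, and $m_\ihalf-m_\imhalf-\Delta x_i=\tfrac14\left(\Delta x_{i+1}+\Delta x_{i-1}-2\Delta x_i\right)$ is generically $O(h)$, not $O(h^2)$. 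Since $\sum_i\rho_i\sim m_1/(\xi h)$, your per-cell error budget then yields an $O(1)$ total error and the argument does not close (the sum does in fact converge to the right limit, but only through cancellations that require a further summation by parts or a weak-convergence argument for the oscillating mesh factor --- not the uniform consistency you invoke). If instead $\tilde\psi_\ihalf=\psi(x_\ihalf)$ (cell-edge evaluation), the consistency estimate is fine (even $O(h^3)$, since $x_i$ is the midpoint of $[x_\imhalf,x_\ihalf]$), but then your first equality is no longer exact: it acquires a quadrature error
$\int_0^T\sum_{i}\int_{x_i}^{x_{i+1}} \d\rho_\ihalf\,\left[\psi(t,x)-\psi(t,x_\ihalf)\right]\d x\,\d t$
that you never account for.

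Controlling that error is the crux, and the mass bound alone cannot do it: one needs Cauchy--Schwarz together with the gradient bound of Corollary \ref{cor:apriori}, which gives the bound
$\|\partial_x\psi\|_{\infty}\,\bigl(\int_0^T\sum_i\Delta x_\ihalf|\d\rho_\ihalf|^2\,\d t\bigr)^{1/2}\bigl(\int_0^T\sum_i\Delta x_\ihalf^3\,\d t\bigr)^{1/2}\leq C\,h$.
This is exactly how the paper proceeds: it pairs $\beta_h$ with $\partial_x\varphi$ \emph{exactly}, using $\int_{C_i}\partial_x\varphi=\varphi(t,x_\ihalf)-\varphi(t,x_\imhalf)$ so that no consistency error for the test function ever arises, performs the exact Abel summation, and then spends its one displayed estimate on precisely the quadrature error above. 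So the missing ingredient is not new to you --- you already cite Corollary \ref{cor:apriori} for weak compactness --- but it must be deployed a second time in the identification step; Taylor expansion of $\psi$ plus the bound $\sum_i\rho_i\leq m_1/(\xi h)$ is not enough. (A minor point: your claim $g_U=g_\rho+g_\eta$ has a sign error; from $\d U_h=-(\d\rho_h+\d\eta_h)$ one gets $g_U=-(g_\rho+g_\eta)$, consistent with your later, correct line $\partial_x U=-(\partial_x\rho+\partial_x\eta)$.)
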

\begin{proof}
	Take $\beta\in \{\rho, \eta, U\}$, hence from Lemma \ref{lem:strongconvergence}, we
        know that $\beta_h\rightarrow \beta$ strongly in
        $L^2(Q_T)$. Furthermore, from Corollary \ref{cor:apriori} we also
        deduce that $\d \beta_h$ weakly converges to some function $r \in
        L^2(Q_T)$. 

Let us show that $\beta\in L^2(0,T, H^1(a,b))$ and
        $r=\frac{\partial\beta}{\partial x}$. First, we have for any $t\in [0,T]$ and any $\varphi\in
        \mC^\infty_c((0,T)\times(a,b))$,
	\begin{align*}
		\int_{Q_T} \beta_{h}(t)\,\frac{\partial
          \varphi}{\partial x} \,\d x &\,=\, \int_0^T \sum_{i=1}^N\beta_i(t) \,\left[ \varphi(t,x_\ihalf) - \varphi(t,x_\imhalf)\right]\, \d t\\
		&\,=\,-\int_0^T \sum_{i=1}^{N-1} \Delta x_\ihalf \,\d \beta_\ihalf(t)\, \varphi(t,x_\ihalf)\, \d t,
	\end{align*}
	having used discrete integration by parts and the fact that
        $\varphi$ is  compactly supported, \emph{i.e.} $\varphi(t,
        x_{N+1/2})=\varphi(t, x_{1/2})=0$. Then, by
        Definition \ref{def:ApproximateSolutions} on the discrete
        gradient, we may consider
	\begin{align*}
	&\left|\int_0^T\sum_{i=1}^{N-1}\int_{x_i}^{x_{i+1}} \d
                        \beta_{\ihalf} \varphi(t,x)\d x \d t +
                        \int_0^T\int_a^b \beta_{h}\,
                        \frac{\partial\varphi}{\partial x} \,\d x\, \d t \right|\\
	&\quad\,\leq\, \int_0^T\sum_{i=1}^{N-1} \int_{x_i}^{x_{i+1}} \left| \d \beta_\ihalf\right|\, \left|\varphi(t,x) - \varphi(t,x_\ihalf)\right|\,\d x \,\d t\\
	&\quad\,\leq\,\left\|\frac{\partial \varphi}{\partial x}\right\|_\infty\,  \left(\int_0^T\sum_{i=1}^{N-1}\Delta x_{i+1/2} \left|\d \beta_\ihalf\right|^2\d t\right)^{1/2} \left(\int_0^T\sum_{i=1}^{N-1}\Delta x_{i+1/2}^3\d t\right)^{1/2}\\
	&\quad\,\leq\,\left\|\frac{\partial \varphi}{\partial x}\right\|_\infty\, C^{1/2} \,T^{1/2} \,\sqrt{b-a}\,h,
\end{align*}
having used the a priori bounds, \emph{cf.} Corollary \ref{cor:apriori}.
This yields the statement, when $h\rightarrow 0$, for we have 
	\begin{align}
		\label{eq:convergence_weak_derivative}
		\int_0^T\sum_{i=1}^{N-1} \int_{x_i}^{x_{i+1}} \d
          \beta_\ihalf \varphi(t,x)\,\d x\, \d t \,+\,
          \int_0^T\int_a^b \beta_{h}\, \frac{\partial\varphi}{\partial
          x}\, \d x\, \d t \,\rightarrow\, 0,
	\end{align}
	which proves that $\d \beta_h$ converges  weakly to
        $\frac{\partial \beta}{\partial x}$, as $h\rightarrow 0$ and thus  $\beta \in L^2(0,T;H^1(a,b))$.
\end{proof}

\subsection{Passing to the limit}
We have now garnered all information necessary to  prove  Theorem \ref{thm:convergence}. For brevity we shall
only show the convergence result for $\rho$, as it follows for $\eta$
similarly, using the same arguments. Let $\varphi\in
C_c^\infty([0,T)\times (a,b))$ be a test function. We introduce the
following notations: 
$$
\left\{ 
\begin{array}{lll}
\mE_h &:=&\ds \int_0^T\int_{a}^b \rho_{h} \frac{\partial
  \varphi}{\partial t} \,\d x\, \d t \,+\, \int_a^b \rho_{h}(0)\,\varphi(0)\,\d x, 
\\ \, \\ 
\mA_h &:=&\ds \int_0^T\int_{a}^b  \d V_{1,h}
          \,\rho_h\,\frac{\partial\varphi}{\partial x}\,\d x\, \d t,
\\ \, \\ 
\mC_h &:=&\ds\nu\,\int_0^T\int_{a}^b   \d U_{h} \,\rho_h\,\frac{\partial\varphi}{\partial x}\,\d x\, \d t,
\\ \, \\ 
\mD_h &:=&\ds\frac{\epsilon}{2}\int_0^T \int_{a}^b\rho_{h}^2\,\frac{\partial^2\varphi}{\partial x^2}\,\d x\, \d t.
\end{array}\right.
$$
and 
$$
\varepsilon(h) \,:=\, \mE_h \,+\, \mA_h \,+\, \mC_h \,+\, \mD_h.
$$

On the other hand, we set 
$$
\varphi_i(t) = \frac{1}{\Delta x_i} \int_{C_i} \varphi(t,x)\,\d x,
$$ 
and multiply the scheme, Eq. \eqref{eq:scheme_evol}, by the test function and integrate in time and space to get
\begin{equation}
 \mE_h \,+\, \mA_{1,h} \,+\, \mC_{1,h} \,+\, \mD_{1,h} \,=\, 0,
\label{cata:0}
\end{equation}
where 
$$
\left\{ 
\begin{array}{lll}
\mA_{1,h} &\,:=\,&\ds\sum_{i=1}^{N-1} \int_0^T \Delta x_\ihalf\,\left[(\d V_1)_{\ihalf}^+ \,\rho_i
          + (\d V_1)_{\ihalf}^- \,\rho_{i+1}\right]\,\d\varphi_{i+1/2}(t)\, \d t,
\\ \, \\ 
\mC_{1,h} &\,:=\,&\ds\nu\,\sum_{i=1}^{N-1} \int_0^T \Delta x_\ihalf\,\left[(\d U)_{\ihalf}^+ \,\rho_i
          + (\d U)_{\ihalf}^- \,\rho_{i+1}\right]\,\d\varphi_{i+1/2}(t)\, \d t,
\\ \, \\ 
\mD_{1,h} &\,:=&\,\ds-\frac{\epsilon}{2}\sum_{i=1}^{N-1} \int_0^T \left[\rho_{i+1}^2
          - \rho_{i}^2\right]\,\d\varphi_{i+1/2}(t)\, \d t.
\end{array}\right.
$$

When $h$ tends to zero and from the strong convergence of
$(\rho_h,\eta_h)_{h>0}$ to $(\rho,\eta)$ in $L^2(Q_T)$, the strong convergence of $(\d V_{k,h})_{h>0}$ to
$V_k^\prime$ in $L^2(Q_T)$  and the weak convergence of the discrete gradient
$(\d U_h)_{h>0}$ to $-\frac{\partial \sigma}{\partial x}$ in $L^2(Q_T)$, it
is easy to see that 
\begin{eqnarray*}
\varepsilon(h) &\rightarrow& \int_0^T\int_{a}^b \left\{ \rho \left[\frac{\partial
  \varphi}{\partial t}\,+\,\left(\frac{\partial
  V_1}{\partial x} - \nu\frac{\partial
  }{\partial x}\left(\rho+\eta\right)\right)\,\frac{\partial
  \varphi}{\partial x} \right] \,+\, \frac{\epsilon}{2}\,\rho^2 \, \frac{\partial^2
  \varphi}{\partial x^2} \right\}  \,\d x\, \d t\\
&\,& \,+\, \int_a^b \rho(0)\,\varphi(0)\,\d x,
\end{eqnarray*}
when $h\rightarrow 0$. Therefore it suffices to prove that
$\varepsilon(h)\rightarrow 0$, as $h$ goes to zero, which will be
achieved by proving that $\mA_h-\mA_{1,h}$, $\mC_h-\mC_{1,h}$ and
$\mD_h-\mD_{1,h}$ vanish  in the limit $h\rightarrow 0$.

\subsubsection*{The self-diffusion part $\mD_h-\mD_{1,h}$.} On the one
hand,  after a simple integration we get
	\begin{eqnarray*}
\mD_h	&=&
            \frac{\epsilon}{2}\,\sum_{i=1}^N \,\int_0^T\rho_i^2(t)
            \,\left[\frac{\partial \varphi}{\partial x}(t,x_\ihalf)
            -\frac{\partial \varphi}{\partial x}(t,x_{i-1/2})\right]
            \,\d t 
\\
&=&
            -\frac{\epsilon}{2}\,\sum_{i=1}^{N-1} \,\int_0^T\left[\rho_{i+1}^2(t)\,-\, \rho_{i}^2(t) \right] 
            \,\frac{\partial \varphi}{\partial x}(t,x_\ihalf)\,\d t. 
	\end{eqnarray*}
Hence, we have
\begin{eqnarray*}
\mD_h-\mD_{1,h} &=& -\frac{\epsilon}{2}\,\sum_{i=1}^{N-1} \,\int_0^T\left[\rho_{i+1}^2(t)\,-\, \rho_{i}^2(t) \right] 
            \,\left[\frac{\partial \varphi}{\partial x}(t,x_\ihalf)-\d
                    \varphi_\ihalf(t)\right]\,\d t 
\end{eqnarray*}
and observing that 
$$
\left|\frac{\partial \varphi}{\partial x}(t,x_\ihalf)-\d
                    \varphi_\ihalf(t)\right| \,\leq \;\left\|
                    \frac{\partial^2 \varphi}{\partial
                      x^2}\right\|_{L^\infty} h,
$$
we obtain, in conjunction with the Cauchy-Schwarz inequality and the {\it a priori} bounds established in  Corollary \ref{cor:apriori},
and Lemma \ref{lem:aprioriL2}, that
\begin{eqnarray}
\label{resu:D}
|\mD_h-\mD_{1,h}| &\leq& \frac\epsilon2 \left\|
                    \frac{\partial^2 \varphi}{\partial
                      x^2}\right\|_{L^\infty}\,\left(\sum_{i=1}^{N-1}
  \int_0^T\Delta x_{\ihalf}|\d\rho_{\ihalf}|^2 \d t\right)^{1/2}\,
\frac{2\,\|\rho_h\|_{L^2(Q_T)}}{\xi^{1/2}}\,h \\
&\leq&\, C\, h,
\nonumber
\end{eqnarray}
in the virtue of the estimate Eq. \eqref{eq:CS_estimate}.
	
\subsubsection*{The cross-diffusion part.} 
Let us now treat the cross-diffusion part. This term is more complicated
since it involves the piecewise constant functions $\rho_h$ and $\d
U_h$, which are not defined on the same mesh. Thus, on the one hand we
reformulate the discrete cross-diffusion term 
$\mC_{1,h}$ as $\mC_{1,h}=\mC_{10,h}+\mC_{11,h}$ with
$$
\mC_{10,h} \,=\,\nu\,\sum_{i=1}^{N-1} \int_0^T \Delta x_\ihalf\, (\d U)_{\ihalf}^-
          \,\left[\rho_{i+1}-\rho_{i}\right]\,\d\varphi_{i+1/2}(t)\, \d t
$$
and
$$
\mC_{11,h} \,=\,\nu\,\sum_{i=1}^{N-1} \int_0^T \Delta x_\ihalf \,\rho_i\,\d U_{\ihalf} \,\d\varphi_{i+1/2}(t)\, \d t,
$$
where a direct computation and the application of  Corollary \ref{cor:apriori}  and Lemma \ref{lem:aprioriL2} yield
\begin{eqnarray}
\label{resu:2}
|\mC_{10,h}| &\leq&\nu\, \left\|
                    \frac{\partial \varphi}{\partial
                      x}\right\|_{L^\infty} \,\|\d U_{h}\|_{L^2(Q_T)}
                    \,\|\d\rho_{h}\|_{L^2(Q_T)} \, h\\
&\leq&  C\, h.
\nonumber 
\end{eqnarray}
On the other hand, the term $\mC_h$ can be rewritten as
$$
\mC_h \,=\,\nu\,\int_0^T\sum_{i=1}^{N-1} \d U_{\ihalf}(t)\,   \int_{x_i}^{x_{i+1}} \rho_h\, \frac{\partial\varphi}{\partial x}\,\d x\, \d t.
$$
Since  
\begin{eqnarray*}
\int_{x_i}^{x_{i+1}}   \rho_h\, \frac{\partial\varphi}{\partial x}\,\d
  x & =&
\rho_i \,\left[\varphi(t,x_\ihalf) - \varphi(t,x_i)\right] \,+\,
  \rho_{i+1}\, \left[\varphi(t,x_{i+1})   - \varphi(t,x_{\ihalf})\right],
\\
&=& \left[ \rho_i - \rho_{i+1}\right] \,\left[\varphi(t,x_\ihalf) - \varphi(t,x_i)\right] \,+\,
  \rho_{i+1}\, \left[\varphi(t,x_{i+1})   - \varphi(t,x_{i})\right],
\end{eqnarray*}
the term $\mC_h$ can be decomposed as $\mC_h=\mC_{00,h}+\mC_{01,h}$
with 
$$
\mC_{00,h} \,=\, -\nu\,\int_0^T \sum_{i=1}^{N-1}  \d U_{\ihalf}\, \left[\rho_{i+1}
     -\rho_i\right]
          \,\left[\varphi(t,x_\ihalf)- \varphi(t,x_i)\right]\,\d t 
$$
and
$$
\mC_{01,h} \,=\,\nu\,\int_0^T \sum_{i=1}^{N-1}  \d
    U_{\ihalf} \,\rho_i\,\left[\varphi(t,x_{i+1})-\varphi(t,x_i)\right]\,\d t.
$$
Similarly to (\ref{resu:2}), the first term $\mC_{00,h}$  can
be estimated as
\begin{equation}
\label{resu:3}
|\mC_{00,h} |\,\leq  C\, h,
\end{equation}
whereas the second term $\mC_{01,h}$ is compared to $\mC_{11, h}$ 
$$
|\mC_{01, h}-\mC_{11, h}|\,\leq\,\nu \int_0^T \sum_{i=1}^{N-1} \Delta x_\ihalf |\d
    U_{\ihalf}| \,\rho_i\,\left|\frac{\varphi(t,x_{i+1})-\varphi(t,x_i)}{\Delta x_\ihalf}-d\varphi_{i+1/2}(t)\right|\,\d t.
$$
Using a second order Taylor expansion of $\varphi$ at $x_i$ and
$x_{i+1}$, it yields that 
$$
\left|\frac{\varphi(t,x_{i+1})-\varphi(t,x_i)}{\Delta
    x_\ihalf}-d\varphi_{i+1/2}(t)\right|\,\leq\, C\, h,
$$
hence we get  from  Corollary \ref{cor:apriori}  and Lemma \ref{lem:aprioriL2} that
\begin{equation}
\label{resu:4}
|\mC_{01, h}-\mC_{11, h}| \leq C\, h.
\end{equation}
Gathering Eqs. (\ref{resu:2}), (\ref{resu:3}), and (\ref{resu:4}), we finally
obtain that
\begin{equation}
\label{resu:C}
|\mC_{h}-\mC_{1, h}| = |\mC_{00,h} + \mC_{01,h} - \mC_{10,h}-\mC_{11,h}|\leq C\, h.
\end{equation}
	
\subsubsection*{The advective part.} The evaluation of
$\mA_{h}-\mA_{1,h}$ is along the same lines of the
cross-diffusion terms $\mC_{h}-\mC_{1,h}$ since the latter is treated
as an advective term. Hence, thanks to Lemma \ref{dVh}, we get that
\begin{equation}
\label{resu:A}
|\mA_{h}-\mA_{1, h}| \leq C\, h.
\end{equation}

Finally by definition of $\varepsilon(h)$ and using  Eq.
(\ref{cata:0}) together with Eqs. (\ref{resu:D}),  (\ref{resu:C}), and
(\ref{resu:A}),  we obtain
\begin{eqnarray*}
|\varepsilon(h)| &=& | -(\mA_{1,h} \,+\, \mC_{1,h} \,+\, \mD_{1,h}) + \mA_{h} \,+\, \mC_{h} \,+\, \mD_{h} |
\\
&\leq & | \mA_{h} -\mA_{1,h} | \,+\, | \mC_{h} -\mC_{1,h} | \,+\,  |
        \mD_{h} -\mD_{1,h} | 
\\
&\leq&  C\, h,
\end{eqnarray*}
that is, $\varepsilon(h)\rightarrow 0$, when $h\rightarrow 0$, which
proves that $(\rho,\eta)$ is a weak solution to
Eq. \eqref{eq:crossdiffsystem}. This proves the second item of Theorem
\ref{thm:convergence}.

Finally the last item concerning the existence of solutions to
\eqref{eq:crossdiffsystem} is a direct consequence of the convergence.  

%%%%%%%%%%%%%%%%%%%%%%%%%%%%%%%%%%%%%%%%%%%%%%%%%%%%%%%%%%%%%%%%%%%%%%%%%%%%
%%%%%%%%%%%%%%%%%%%%%%%%%%%%%%%%%%%%%%%%%%%%%%%%%%%%%%%%%%%%%%%%%%%%%%%%%%%%
%%%%%%%%%%%%%%%%%%%%%%%%%%%%%%%%%%%%%%%%%%%%%%%%%%%%%%%%%%%%%%%%%%%%%%%%%%%%

\section{A Fully Discrete Implicit Scheme}
In this section we shall comment on a discrete-in-time version of the semi-discrete scheme  \eqref{eq:scheme}. To this end we replace the time derivative in Eq. \eqref{eq:scheme_evol} by simple forward differences and obtain the following implicit and fully-discrete scheme
\begin{align}
	\label{eq:discrete_scheme_evol}
	\left\{
	\begin{array}{l}
	\displaystyle
	\frac{\rho_i^{n+1}-\rho_i^{n}}{\Delta t}=\displaystyle- \frac{\mF_{\ihalf}^{n+1} - \mF_{i-1/2}^{n+1}}{\Delta x_i},\\[1em]
	\displaystyle
	\frac{\eta_i^{n+1} - \eta_i^{n}}{\Delta t}=\displaystyle - \frac{\mG_{\ihalf}^{n+1} - \mG_{i-1/2}^{n+1}}{\Delta x_i},
	\end{array}
	\right.
\end{align}
where $\Delta t >0$. System \eqref{eq:discrete_scheme_evol} gives rise
to two approximating sequences $(\rho_i^n)_{1\leq i \leq N}$ and
$(\eta_i^n)_{1\leq i \leq N}$, for $0\leq n\leq M$ where $M:= \lceil T / \Delta t\rceil$ and the discrete time instances $t^n:=n\Delta t$; cf. Theorem \ref{thm:existence_discrete}. Here the numerical fluxes are given by
\begin{align}
	\label{eq:implicit_numerical_fluxes}
	\left\{
	\begin{array}{l}
	\displaystyle
	\mF_{\ihalf}^{n+1} =	\displaystyle \left[\nu\,(\d U^{n+1})_{\ihalf}^+ +
                       (\d V_1^{n})_{\ihalf}^+\right] \,\rho_i^{n+1} \,+\,
                       \left[\nu\,(\d U^{n+1})_{\ihalf}^- + (\d
                       V_1^n)_{\ihalf}^-\right]\,\rho_{i+1}^{n+1}\\ [1.5em]
	\phantom{\curlyF_{\ihalf}=	}- \ds\frac{\epsilon}{2} \,\frac{(\rho_{i+1}^{n+1})^2 - (\rho_i^{n+1})^2}{\Delta x_{i+1/2}},\\[1.5em]
	\displaystyle
	\curlyG_{\ihalf}^{n+1} =	\displaystyle \left[ \nu\,(\d U^{n+1})_{\ihalf}^+
                       +(\d V_2^n)_{\ihalf}^+\right]\,   \eta_i^{n+1} \,+\,
                       \left[\nu\, (\d U^{n+1})_{\ihalf}^- + (\d V_2^{n})_{\ihalf}^-\right]\,\eta_{i+1}^{n+1}\\[1.5em]
	\phantom{\curlyG_{\ihalf} =	}- \ds\,\frac{\epsilon}{2}\, \frac{(\eta_{i+1}^{n+1})^2 - (\eta_i^{n+1})^2}{\Delta x_{i+1/2}},
	\end{array}
	\right. \tag{\ref{eq:discrete_scheme_evol}a}
\end{align}
for $i = 1,\ldots, N-1$, with the numerical no-flux boundary condition
\begin{align}
\label{eq:implicit_num_noflux}
	\mF_{1/2}^{n+1} = \mF_{N+1/2}^{n+1} &= 0, \quad \text{and} \quad \mG_{1/2}^{n+1} = \mG_{N+1/2}^{n+1} = 0,\tag{\ref{eq:discrete_scheme_evol}b}
\end{align}
for $n=0,\ldots,M$. Recall that
\begin{align*}
	(\d U^{n+1})_\ihalf^{\pm} = \left(\frac{(\rho_{i+1}^{n+1} + \eta_{i+1}^{n+1}) - (\rho_{i}^{n+1} + \eta_{i}^{n+1})}{\Delta x_\ihalf}\right)^\pm,
\end{align*}
and

\begin{align*}
(\d V_{k}^n)_\ihalf^\pm &= \bigg(-\sum_{j=1}^{N} \int_{C_j} \frac{ W_{k\,1}(x_{i+1}-y)-
  W_{k\,1}(x_i-y)}{\Delta x_\ihalf} \,\rho_j^n\,\d y
\\
&\qquad - \sum_{j=1}^{N} \int_{C_j} \frac{W_{k\,2}(x_{i+1}-y)- W_{k\, 2}(x_i-y)}{\Delta x_\ihalf}
\,\eta_j^n\,\d y\bigg)^\pm,
\end{align*}
for $k=1,2$.

Similarly to Definition \ref{def:ApproximateSolutions} we define the piecewise constant interpolation by
	\begin{align*}
		\rho_{h}(t,x):= \rho_i^n,\qquad \text{and} \qquad \eta_{h}(t,x):=\eta_i^n,
	\end{align*}
	for all $(t,x) \in [t^n, t^{n+1})\times C_i$, with $i=1,\ldots,N$, and $n=0,\ldots, M$. Moreover, we define the discrete approximation of the spatial gradients as 
	\begin{align*}
		\d_x \rho_{h}(t,x) = \frac{\rho_{i+1}^n-\rho_i^n}{\Delta x_{i+1/2}}, \qquad \mbox{and}\qquad \d_x \eta_{h}(t,x) = \frac{\eta_{i+1}^n - \eta_i^n}{\Delta x_{i+1/2}},
	\end{align*}
	for $(t,x)\in[t^n,t^{n+1})\times [x_i,x_{i+1})$, for
        $i=1,\ldots,N-1$ and $n=0,\ldots,M$. As above, we set the discrete gradients to zero
         on $(a,x_1)$ and  $(x_N,b$). Furthermore, we define the discrete time derivative as  
	\begin{align*}
		\d_t \rho_{h}(t,x) = \frac{\rho_{i}^{n+1}-\rho_i^n}{\Delta t}, \qquad \mbox{and}\qquad \d_t \eta_{h}(t,x) = \frac{\eta_{i}^n - \eta_i^n}{\Delta t},
	\end{align*}
	for $(t,x)\in[t^n,t^{n+1})\times C_i$, for
        $i=1,\ldots,N$ and $n=0,\ldots,M-1$.

\begin{theorem}[\label{thm:existence_discrete}Existence and uniqueness
  result]
Let $\rho_i^0, \eta_i^0$ be nonnegative initial data with mass $m_1$ and $m_2$, respectively, and 
	assume the following time step restriction condition
	\begin{align}
		\label{eq:CFL}
16\, (m_1+m_2)\,\frac{\Delta  t}{(\xi\,h)^3}\,<\, 1.
	\end{align}
Then there exists a unique nonnegative solution $(\rho_i^n, \eta_i^n)$
to scheme \eqref{eq:discrete_scheme_evol},
        \eqref{eq:implicit_numerical_fluxes} and
        \eqref{eq:implicit_num_noflux}. 
\end{theorem}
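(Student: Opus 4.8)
The plan is to construct the solution one time level at a time and, at each level, to recast the fully-implicit update \eqref{eq:discrete_scheme_evol}--\eqref{eq:implicit_num_noflux} as a fixed-point problem amenable to the Banach contraction principle, the CFL condition \eqref{eq:CFL} being exactly what forces both the self-mapping (positivity) and the contraction. Arguing by induction on $n$, I would assume $(\rho_i^n,\eta_i^n)_i$ is nonnegative with masses $m_1,m_2$ and work on
\[
\mathcal{K}=\Big\{(r,s)\in\R^{2N}:\ r_i,s_i\geq 0,\ \textstyle\sum_i\Delta x_i r_i=m_1,\ \sum_i\Delta x_i s_i=m_2\Big\},
\]
which is closed, bounded and convex, hence complete for the Euclidean metric. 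The mesh regularity \eqref{admissible} yields on $\mathcal{K}$ the uniform bound $r_i,s_i\leq (m_1+m_2)/(\xi h)$, which will drive every coefficient estimate below.

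Next I would define a map $\mathcal{S}:\mathcal{K}\to\R^{2N}$ by \emph{freezing} the nonlinear coefficients at the argument and solving the resulting \emph{linear} system. Given $(\hat\rho,\hat\eta)\in\mathcal{K}$, I freeze the upwind velocities $\nu(\d U)_\ihalf^\pm+(\d V_k^n)_\ihalf^\pm$ at $(\hat\rho,\hat\eta)$ and, writing $\rho_{i+1}^2-\rho_i^2=(\hat\rho_{i+1}+\hat\rho_i)(\rho_{i+1}-\rho_i)$, recast the self-diffusion flux as a discrete diffusion with nonnegative coefficient $d_\ihalf=\tfrac\epsilon2(\hat\rho_{i+1}+\hat\rho_i)/\Delta x_\ihalf\geq 0$. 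With the coefficients frozen the two species decouple, and $\mathcal{S}(\hat\rho,\hat\eta)$ is the solution of two tridiagonal systems $\mathcal{M}\rho=\rho^n$ and $\tilde{\mathcal{M}}\eta=\eta^n$. Since a fixed point of $\mathcal{S}$ is precisely a solution of the implicit step, it remains to show $\mathcal{S}$ is a contraction of $\mathcal{K}$ into itself.

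For the self-mapping property I would show $\mathcal{M}$ (and likewise $\tilde{\mathcal{M}}$) is a nonsingular $M$-matrix. Writing $\mathcal{M}=I+\Delta t\,\mathcal{L}$, where $\mathcal{L}$ is the row-rescaled discrete flux-difference operator, the upwind structure (the factor $v^+\geq 0$ multiplying $\rho_i$ and $v^-\leq 0$ multiplying $\rho_{i+1}$) together with $d_\ihalf\geq 0$ makes every off-diagonal entry nonpositive, so $\mathcal{M}$ is a Z-matrix. Testing against the constant vector $\mathbf{1}$, the diffusion contributions telescope and only the velocity divergence survives, giving $(\mathcal{M}\mathbf{1})_i=1+\tfrac{\Delta t}{\Delta x_i}\big(v_\ihalf-v_\imhalf\big)$, with the obvious one-sided modification in the no-flux cells $i=1,N$ where the stencil reduces to two points. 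The $L^\infty$ bound on $\mathcal{K}$ gives $|v_\ihalf|\lesssim (m_1+m_2)/(\xi h)^2$, so \eqref{eq:CFL} forces $\mathcal{M}\mathbf{1}>0$; a Z-matrix with $\mathcal{M}\mathbf{1}>0$ is a nonsingular $M$-matrix, whence $\mathcal{M}^{-1}\geq 0$ and $\rho^n\geq 0$ yields $\mathcal{S}(\hat\rho,\hat\eta)\geq 0$. Mass conservation is automatic: multiplying the update by $\Delta x_i$, summing and using \eqref{eq:implicit_num_noflux} telescopes the flux to zero. Hence $\mathcal{S}(\mathcal{K})\subset\mathcal{K}$. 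Moreover, since the strict inequality in \eqref{eq:CFL} keeps $(\mathcal{M}\mathbf{1})_i\geq 1-\theta$ for some $\theta<1$, comparison with $\tfrac{1}{1-\theta}\mathbf{1}$ shows $\|\mathcal{M}(X)^{-1}\|_\infty\leq \tfrac{1}{1-\theta}$ uniformly on $\mathcal{K}$.

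Finally, for the contraction I would use the resolvent identity $\mathcal{M}(X)^{-1}-\mathcal{M}(Y)^{-1}=\mathcal{M}(X)^{-1}\big(\mathcal{M}(Y)-\mathcal{M}(X)\big)\mathcal{M}(Y)^{-1}$ for $X,Y\in\mathcal{K}$. The frozen coefficients $d_\ihalf$ and $v_\ihalf^\pm$ are Lipschitz in their argument with constant $\mathcal{O}(1/(\xi h))$, so $\|\mathcal{M}(Y)-\mathcal{M}(X)\|_\infty\lesssim \tfrac{\Delta t}{(\xi h)^2}\|X-Y\|_\infty$; combining this with the uniform bound on $\|\mathcal{M}(X)^{-1}\|_\infty$ and with $\mathcal{M}(Y)^{-1}\rho^n=\mathcal{S}(Y)\in\mathcal{K}$, hence $\|\mathcal{S}(Y)\|_\infty\leq (m_1+m_2)/(\xi h)$, produces a Lipschitz constant of order $(m_1+m_2)\Delta t/(\xi h)^3$, which is strictly below one precisely by \eqref{eq:CFL}. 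Banach's fixed-point theorem then gives a unique fixed point of $\mathcal{S}$ in $\mathcal{K}$, that is, a unique nonnegative mass-conserving solution of the implicit step; iterating over $0\leq n\leq M$ completes the proof. I expect this last estimate to be the main obstacle: the frozen velocity $\d U$ couples both species and scales like $(\xi h)^{-2}$, so contractivity is by no means automatic and hinges on feeding the a priori $L^\infty$ bound coming from mass conservation and mesh regularity into careful bookkeeping that reproduces exactly the scaling of \eqref{eq:CFL}, while the degenerate boundary stencils demand a parallel but separate verification of the $M$-matrix property.
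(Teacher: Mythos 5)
Your freeze-the-coefficients map is exactly the paper's linearisation \eqref{eq:LIN_implicit_numerical_fluxes}, but the paper runs it through \emph{Brouwer's} fixed-point theorem and proves uniqueness by a separate argument on the nonlinear scheme; your attempt to get both at once from \emph{Banach} is where the proof breaks. The contraction constant you obtain from the resolvent identity is not of order $(m_1+m_2)\Delta t/(\xi h)^3$ with a universal constant: the frozen coefficients depend on the argument through $d_\ihalf=\tfrac{\epsilon}{2}(\hat\rho_{i+1}+\hat\rho_i)/\Delta x_\ihalf$ and through $\nu(\d U)^{\pm}_\ihalf$, whose Lipschitz constants in $\hat\rho$ carry the factors $\epsilon/(\xi h)$ and $\nu/(\xi h)$ respectively. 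In $\mathcal{M}(X)^{-1}\bigl(\mathcal{M}(Y)-\mathcal{M}(X)\bigr)\mathcal{S}(Y)$ these differences hit $\mathcal{S}(Y)$, which has size $(m_1+m_2)/(\xi h)$, so your Lipschitz constant is of order $(\epsilon+\nu)(m_1+m_2)\Delta t/\bigl((1-\theta)(\xi h)^3\bigr)$. Since $\epsilon$ and $\nu$ are arbitrary positive parameters, the hypothesis \eqref{eq:CFL} — whose constant $16$ involves neither — cannot force this below one, so Banach's theorem is not applicable under the stated assumption and your argument proves only a weaker theorem with a parameter-dependent CFL. The same defect afflicts your self-mapping step: the row sums $1+\tfrac{\Delta t}{\Delta x_i}(v_\ihalf-v_\imhalf)$ involve $\nu(\d U)$, of size $\nu(m_1+m_2)/(\xi h)^2$, and also $(\d V_1^n)_\ihalf$, of size $\|W'\|_{L^\infty}(m_1+m_2)$, neither of which \eqref{eq:CFL} dominates with the right constants. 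In fact no smallness is needed there at all: because the scheme is in conservation form, the \emph{column} sums of the flux operator vanish (weighted by $\Delta x_i$), so the transpose of your matrix is strictly diagonally dominant; this gives nonsingularity and $\mathcal{M}^{-1}\geq 0$ — hence positivity and mass conservation of the frozen solve — unconditionally, which is what lets the paper invoke Brouwer without any time-step restriction.

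The idea your proposal is missing is that uniqueness must be proved on the \emph{nonlinear} scheme directly, so that the $\epsilon$- and $W$-dependent terms cancel rather than being estimated. The paper takes two solutions, writes the quadratic diffusion differences by the mean value theorem as $h'(\hat\rho)\,s$ with $h'(\hat\rho)\geq 0$, multiplies the difference equation by $\mathrm{sign}(s_i)$ and sums; the upwind/monotone structure (the identity $A_i=B_i+C_i$ with $A_i,B_i,C_i\geq 0$) makes the diffusion, the explicit interaction drift $(\d V^n)^{\pm}$, and the ``same-$U$'' convection terms all cancel, leaving only the cross-coupling terms $\bigl[(\d U)^{\pm}_\ihalf-(\d \tilde U)^{\pm}_\ihalf\bigr]\tilde\rho$. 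Those are the unique terms whose size is exactly $\|(\tilde\rho,\tilde\eta)\|_\infty\,\Delta t/(\xi h)^2\lesssim (m_1+m_2)\Delta t/(\xi h)^3$, which is precisely what \eqref{eq:CFL} controls. In short: Brouwer plus monotonicity for existence (no CFL), and an $L^1$-contraction-type estimate exploiting cancellation for uniqueness (CFL only against the cross-diffusion coupling) — your Banach route cannot reproduce this because the linearised diffusion coefficient no longer sits inside the monotone part of the operator.
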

\begin{proof}
We show existence first and prove uniqueness later. Suppose we are given $(\rho_i^n)_{1\leq i\leq N}$ and $(\eta_i^n)_{1\leq i\leq N}$ from some previous iteration. In order to construct the next iteration we shall employ Brouwer's fixed point theorem. It is easy to verify that the set
\begin{align*}
	\curlyX:=\left\{ (\rho, \eta) \in \R^{2N} \,|\, \forall 1\leq i\leq N: \rho_i, \eta_i \geq 0,  \sum_{i=1}^N \Delta x_i \rho_i \leq m_1, \text{ and } \sum_{i=1}^N \Delta x_i \eta_i \leq  m_2\right\},
\end{align*}
is a convex and compact subset of $\R^N\times\R^N$. Hence, we define the
fixed point operator $\frakS:\curlyX \rightarrow \R^N \times \R^N$ by setting $(\rho^\star, \eta^\star) = \frakS(\rho, \eta)$
where $(\rho^\star,\eta^\star)$ are implicitly given as
\begin{align*}
\rho^\star_i \,=\, \rho_i^n - \frac{\Delta t}{\Delta x_i} \left(\curlyF_\ihalf^\star - \curlyF_\imhalf^\star\right),\;\,\, \eta^\star_i = \eta_i^n - \frac{\Delta t}{\Delta x_i} \left(\curlyG_\ihalf^\star - \curlyG_\imhalf^\star\right)
\end{align*}
for any $(\rho, \eta) \in \curlyX$, where $\curlyF^\star$ and $\curlyG^\star$ denote the numerical fluxes
\begin{align}
\label{eq:LIN_implicit_numerical_fluxes}
	\left\{
	\begin{array}{l}
	\displaystyle
	\curlyF_{\ihalf}^\star =	\displaystyle \left[\nu\,(\d U^{})_{\ihalf}^+ +
                       (\d V_1^n)_{\ihalf}^+\right] \,\rho_i^{\star} \,+\,
                       \left[\nu\,(\d U)_{\ihalf}^- + (\d
                       V_1^n)_{\ihalf}^-\right]\,\rho_{i+1}^{\star}
\\ [1.5em]
	\phantom{\curlyF_{\ihalf}=	}- \ds\frac{\epsilon\,
          (\rho_{i+1}^{}+ \rho_i^{})}{2} \,\frac{\rho_{i+1}^{\star} -
          \rho_i^{\star}}{\Delta x_{i+1/2}},
\\[1.5em]
	\displaystyle
	\curlyG_{\ihalf}^{\star} =	\displaystyle \left[ \nu\,(\d U^{})_{\ihalf}^+
                       +(\d V_2^n)_{\ihalf}^+\right]\,   \eta_i^{\star} \,+\,
                       \left[\nu\, (\d U^{})_{\ihalf}^- + (\d
          V_2^n)_{\ihalf}^-\right]\,\eta_{i+1}^{\star}
\\[1.5em]
	\phantom{\curlyG_{\ihalf} =	}- \ds\,\frac{\epsilon \,(\eta_{i+1} + \eta_i)}{2}\, \frac{\eta_{i+1}^{\star} - \eta_i^{\star}}{\Delta x_{i+1/2}},
	\end{array}
	\right.
\end{align}
for $i = 1,\ldots, N-1$ where  $\d U$ is computed from $(\rho,\eta)$,  with the numerical no-flux boundary condition
\begin{align}
\label{eq:LIN_implicit_num_noflux}
	\curlyF_{1/2}^{\star} = \curlyF_{N+1/2}^{\star} &= 0, \quad \text{and} \quad \mG_{1/2}^{\star} = \mG_{N+1/2}^{\star} = 0.
\end{align}
Notice that for any given $(\rho,\eta)\in \curlyX$, the viscosity
terms in front of the discrete gradients involved in the definition of
the fluxes $\curlyF_{\ihalf}^\star$ and
$\curlyG_{\ihalf}^\star$ are indeed nonnegative, hence  the couple $(\rho^\star,\eta^\star)$ is well
defined since it corresponds to the unique solution of a classical
fully implicit scheme in time with an upwind discretisation for the convective
terms and a centred approximation for diffusive terms \cite{EGT00}.  Moreover, since $\rho^n$ and $\eta^n$ are
nonnegative and using  the monotonicity of the numerical flux with respect to
$(\rho^\star,\eta^\star)$, we prove that both densities $\rho^\star$
and $\eta^\star$ are also nonnegative. Furthermore, using the nonnegativity and the no-flux conditions, we get
\begin{align*}
	\|\rho^\star\|_{L^1} = \sum_{i=1}^N\Delta x_i \rho_i^\star = m_1, \qqand \|\eta^\star\|_{L^1} = \sum_{i=1}^N\Delta x_i \eta_i^\star = m_2, 
\end{align*}
which yields that $\frakS(\curlyX)\subset \curlyX$.   Finally,
$\frakS$ is continuous as the composition of continuous
functions. Thus, we may apply Brouwer's fixed point theorem to infer
the existence of a fixed point, $(\rho^{n+1}, \eta^{n+1})$. It now remains to show uniqueness of the fixed point. 

To treat in a systematic way the boundary conditions and simplify the presentation, we define ghost values
for $(\rho,\eta)$ by setting for $\alpha\in\{\rho,\,\eta\}$, and $k\in\{1,\,2\}$,
$$
\alpha_{N+1}\,=\,\alpha_N \quad  \alpha_{0}\,=\,\alpha_1 \quad{\rm and}\quad
(\d V_k^n)_{N+1/2} \,=\,(\d V_k^n)_{1/2} = 0.
$$
Then we consider two solutions $(\tilde \rho,
\tilde \eta)$ and $(\rho, \eta)$ to  (\ref{eq:discrete_scheme_evol}).
Setting $h(x):=x^2/2$,  $s := \rho-\tilde\rho$ and $r :=
\eta - \tilde \eta$, we  get after substituting the two solutions to
(\ref{eq:discrete_scheme_evol}), for $i=1,\ldots, N$,
\begin{align*}
	s_i = &- \frac{\Delta t}{\Delta x_i} \left((\d V_1^n)_\ihalf^+
                \,s_i + (\d V^n_1)_\ihalf^- \, s_{i+1}-(\d
                V_1^n)_\imhalf^+ \, s_{i-1} - (\d V^n_1)_\imhalf^- \, s_i\right)\\
	&+ \frac{\Delta t}{\Delta x_i} \left(\frac{[h(\rho_{i+1})
          -h(\tilde \rho_{i+1})] \,-\,  
          [h(\rho_i) -
          h(\tilde \rho_i)]}{\Delta x_\ihalf } \,-\, \frac{[h(\rho_{i})- h(\tilde
          \rho_i)] \,-\, [h(\rho_{i-1})-h(\tilde \rho_{i-1})]}{\Delta x_\imhalf}\right)\\
	&- \frac{\Delta t}{\Delta x_i} \left( (\d U)_\ihalf^+ \, s_i
          + (\d U)_\ihalf^- \, s_{i+1} -(\d U)_\imhalf^+ \, s_{i-1}
          -(\d U)_\imhalf^- \, s_{i} \right) \\
	&- \frac{\Delta t}{\Delta x_i} \left([(\d U)_\ihalf^+  - (\d
          \tilde U)_\ihalf^+]\,\tilde \rho_i \,+\, [(\d U)_\ihalf^-  - (\d
          \tilde U)_\ihalf^-]\, \tilde \rho_{i+1} \right) \\
	&+\frac{\Delta t}{\Delta x_i} \left( [(\d U)_\imhalf^+  - (\d
          \tilde U)_\imhalf^+] \,\tilde \rho_{i-1} \,+\, [(\d U)_\imhalf^- - (\d \tilde U)_\imhalf^-] \,\tilde \rho_{i} \right)
\end{align*}
and a similar relation for $(r_i)_{1\leq i\leq N}$. Applying a Taylor
expansion on $h(\rho)=h(\tilde\rho) + h'(\hat\rho)\,s$,
with $\hat\rho$ a convex combination of $\rho$ and $\tilde\rho$, we
may write  
\begin{equation}
\label{tmp:0}
\left\{
\begin{array}{ll}
(\Delta x_i + \Delta t \,A_i )\,s_i = &  \ds\Delta t \left( B_{i-1}\,s_{i-1} \,+\, C_{i+1} \, s_{i+1}  \right) \\[0.75em]
	&-\ds\Delta t \,\left([(\d U)_\ihalf^+  - (\d
          \tilde U)_\ihalf^+]\,\tilde \rho_i \,+\, [(\d U)_\ihalf^-  - (\d
          \tilde U)_\ihalf^-]\, \tilde \rho_{i+1} \right) \\[0.75em]
	&+\ds\Delta t \,\left( [(\d U)_\imhalf^+  - (\d
          \tilde U)_\imhalf^+] \,\tilde \rho_{i-1} \,+\, [(\d U)_\imhalf^- - (\d \tilde U)_\imhalf^-] \,\tilde \rho_{i} \right),
\end{array}\right.
\end{equation}
where $A_i$, $B_{i-1}$, $C_{i+1}$ are nonnegative coefficients given by
$$
\left\{
\begin{array}{rl}
\ds A_i \!\!\!&=\,\ds +(\d V_1^n)_\ihalf^+- (\d V^n_1)_\imhalf^-  +  (\d
  U)_\ihalf^+ -  (\d U)_\imhalf^- + \frac{h'(\hat\rho_i)}{\Delta x_\ihalf}+ \frac{h'(\hat\rho_i)}{\Delta x_\imhalf},
\\[1.1em]
\ds B_{i-1} \!\!\!&=\,\ds  +(\d V_1^n)_\imhalf^+ +   (\d U)_\imhalf^+ + \frac{h'(\hat\rho_{i-1})}{\Delta x_\imhalf},
 \\[1.1em]
\ds C_{i+1} \!\!\!&=\, \ds -(\d V_1^n)_\ihalf^-- (\d U)_\ihalf^- + \frac{h'(\hat\rho_{i+1})}{\Delta x_\ihalf}.
\end{array}
\right.
$$
Now, we multiply  equation (\ref{tmp:0})  by ${\rm sign}(s_i)$ and sum
over $i=1,\ldots,N$, hence using that  $x\mapsto x^\pm$ is
Lipschitz continuous and observing that $A_i=B_i+C_i$, with $A_i,
\,B_i, \,C_i\,\geq\,
0$, it yields 
\begin{align*}
\sum_{i=1}^N \Delta x_i |s_i| \,\,\leq\,\,&  2\Delta t
                                            \,\|\tilde\rho\|_\infty\,\sum_{i=1}^N\left(
                                            \frac{|s_{i+1}|+|r_{i+1}|
                                          +|s_{i}|+|r_{i}|
                                            }{\Delta
                                            x_\ihalf} + \frac{|s_{i-1}|+|r_{i-1}|
                                          +|s_{i}|+|r_{i}|
                                            }{\Delta
                                            x_\imhalf} \right) 
\end{align*}
and in a similar way,
\begin{align*}
\sum_{i=1}^N \Delta x_i |r_i| \,\,\leq\,\,&  2\Delta t
                                            \,\|\tilde\eta\|_\infty \,\sum_{i=1}^N\left(
                                            \frac{|r_{i+1}|+|s_{i+1}|
                                          +|r_{i}|+|s_{i}|
                                            }{\Delta
                                            x_\ihalf} + \frac{|r_{i-1}|+|s_{i-1}|
                                          +|r_{i}|+|s_{i}|
                                            }{\Delta
                                            x_\imhalf} \right).
\end{align*}
Gathering these latter inequalities and from \eqref{admissible}, it
gives that 
\begin{align*}
\sum_{i=1}^N \Delta x_i \left( |s_i| + |r_i|\right) \,\,\leq\,\,16\,
  \|(\tilde\rho,\tilde\eta)\|_\infty \,\frac{\Delta t}{(\xi\,h)^2}\, \sum_{i=1}^N \Delta x_i \left( |s_i| + |r_i|\right). 
\end{align*}
Finally,  from the nonnegativity and the preservation of mass,  we
have
$$
\|(\tilde\rho,\tilde\eta)\|_\infty  \,\leq\,
\frac{m_1+m_2}{\xi h},
$$
hence under the condition \eqref{eq:CFL}, we conclude that $s=r=0$ and the uniqueness follows.
\end{proof}

It is worth to mention here that  the condition \eqref{eq:CFL} is not
optimal since we only use the discrete $L^1$-estimate on $\rho$ and $\eta$ to
control the discrete gradient and the $L^\infty$-norm.

We are now in position to state for  \eqref{eq:discrete_scheme_evol},
        \eqref{eq:implicit_numerical_fluxes} and
        \eqref{eq:implicit_num_noflux} an analogous results to the
semi-discrete case
\begin{theorem}[\label{thm:implicit_thm}Convergence to a weak solution of the implicit Euler discretisation]
Under the assumptions of Theorem \ref{thm:existence_discrete}, let $\rho_0, \eta_0 \in L_+^1(a,b) \cap L_+^\infty(a,b)$ be
        some initial data  and $Q_T := (0,T)\times (a,b)$ as
        above. Then, given two nonnegative sequences
        $(\rho_i^n)_{1\leq i \leq  N}$ and $(\eta_i^n)_{1\leq i\leq
          N}$ satisfying  \eqref{eq:discrete_scheme_evol},
        \eqref{eq:implicit_numerical_fluxes} and
        \eqref{eq:implicit_num_noflux}, for any $n\in\{0,\ldots,M\}$, then
\begin{itemize}
\item[$(i)$]  up to a subsequence, the piecewise constant 
        approximations converge strongly in $L^2(Q_T)$ to
        $(\rho,\eta) \in L^2(Q_T)$, where $(\rho,\eta)$ is  a weak solution as in
        Definition \ref{def:WeakSolution}. Furthermore we have $\rho$, $\eta
        \in L^2(0,T; H^1(a,b))$;
\item[$(ii)$] in particular, system \eqref{eq:crossdiffsystem} has a weak solution.
\end{itemize}
\end{theorem}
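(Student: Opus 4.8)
The plan is to reproduce at the fully discrete level the three-step strategy behind Theorem~\ref{thm:convergence}: a discrete energy estimate, uniform a priori bounds giving compactness, and passage to the limit. The only genuinely new ingredient is the time discretisation. First I would establish the discrete-in-time analogue of Lemma~\ref{lem:entropy_control}. Multiplying the first line of \eqref{eq:discrete_scheme_evol} by $\Delta x_i\log\rho_i^{n+1}$ and summing over $i$, the key point is that convexity of $x\mapsto x\log x$ and the subgradient inequality give $\rho_i^{n+1}\log\rho_i^{n+1}-\rho_i^n\log\rho_i^n\le(\rho_i^{n+1}-\rho_i^n)(\log\rho_i^{n+1}+1)$, and the constant term cancels after summation because mass is conserved at each step (Theorem~\ref{thm:existence_discrete}). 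This bounds the time increment of the discrete entropy by precisely the flux expression appearing in Lemma~\ref{lem:entropy_control}, now evaluated at level $n+1$; the remaining manipulations there --- the auxiliary quantity $\tilde\rho_\ihalf$, the sign estimates \eqref{eq:numerical_artifacts}, and the Young inequality controlling $\mR_h$ --- carry over verbatim, the explicit evaluation of $(\d V_k^n)$ being harmless since its bound uses only the conserved masses $m_1,m_2$. Summing over $n$ and telescoping the entropy then yields the discrete counterparts of Corollary~\ref{cor:apriori} and, through the discrete Poincar\'e inequality, of Lemma~\ref{lem:aprioriL2}.

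Next I would extract compactness. Space-translation estimates follow at once from the uniform bound on $\int_0^T\sum\Delta x_\ihalf(|\d\rho_h|^2+|\d\eta_h|^2)\,\d t$. For the time translates I would test the scheme against a fixed smooth function to control the discrete time derivative in $L^1(0,T;H^{-2}(a,b))$, reusing the flux bounds of Lemma~\ref{lem:strongconvergence} (estimate \eqref{eq:CS_estimate} together with the cross-diffusion and interaction bounds there), which depend only on the a priori estimates and are insensitive to the time level of the fluxes. Combining the spatial $H^1$-bound with this negative-norm bound on the time increments, a discrete Aubin--Simon argument --- equivalently the Kolmogorov--Riesz--Fr\'echet criterion applied to space and time translates --- produces strong convergence of $(\rho_h,\eta_h)$ in $L^2(Q_T)$ to some $(\rho,\eta)$. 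Proposition~\ref{prop:WeakConvergenceDerivatives} and Lemma~\ref{dVh} then adapt without change, giving weak convergence of $\d\beta_h$ to $\partial_x\beta$ for $\beta\in\{\rho,\eta,U\}$ (hence $\rho,\eta\in L^2(0,T;H^1(a,b))$) and strong convergence of $\d V_{k,h}$ to $V_k'$.

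Finally I would pass to the limit along the lines of the semi-discrete argument. Multiplying \eqref{eq:discrete_scheme_evol} by $\varphi_i^n:=\frac{1}{\Delta x_i}\int_{C_i}\varphi(t^n,x)\,\d x$, summing by parts in space and performing Abel summation in time against the forward differences, one recovers a discrete identity of the form \eqref{cata:0} with initial term $\mE_h$. The defects $\mA_h-\mA_{1,h}$, $\mC_h-\mC_{1,h}$ and $\mD_h-\mD_{1,h}$ are each $O(h)$ by the same Taylor-expansion and Cauchy--Schwarz arguments that gave \eqref{resu:D}, \eqref{resu:C} and \eqref{resu:A}, while replacing the piecewise-constant-in-time test values by $\varphi$ itself contributes an additional $O(\Delta t)$; both vanish in the limit under the CFL scaling \eqref{eq:CFL}. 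This identifies $(\rho,\eta)$ as a weak solution in the sense of Definition~\ref{def:WeakSolution}, proving $(i)$, and $(ii)$ is then immediate.

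I expect the main obstacle to be compactness in time: unlike the semi-discrete setting, where Aubin--Lions applies directly to the $C^1$ trajectory, here one must control uniform space--time translates of piecewise-constant-in-time functions and invoke a discrete compactness lemma, while ensuring that the mismatch between the implicit diffusion fluxes at level $n+1$ and the explicitly evaluated convolution field $\d V_k^n$ corrupts neither the entropy estimate nor the passage to the limit.
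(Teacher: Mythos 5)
Your proposal is correct and follows essentially the same route as the paper's (sketched) proof: a fully discrete entropy inequality obtained from convexity of the entropy (your subgradient inequality for $x\log x$, with the constant term removed by mass conservation, is exactly the paper's inequality for $x(\log x-1)$), the sign estimates carried over with $\tilde\rho_{i+1/2}^{n+1}$ evaluated at level $n+1$ while the drift $\d V_k^n$ at level $n$ is controlled by the conserved masses, the resulting a priori bounds, compactness, and the same passage to the limit. If anything, you are more explicit than the paper about the one delicate point --- time compactness for piecewise-constant-in-time interpolants, which requires a discrete Aubin--Lions/translation argument rather than the classical lemma the paper invokes by name only.
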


\begin{proof}[Sketch of the proof of Theorem \ref{thm:implicit_thm}]
	It is easily observed that the total mass is conserved due to the discrete no-flux boundary conditions, cf. \eqref{eq:implicit_num_noflux}. Together with the nonnegativity we were able to prove a semi-discrete version of the energy estimate Eq. \eqref{eq:cts_inequality} which is at the heart of the convergence result. Similarly as above, we are able to prove a fully discrete version of the energy estimate which then reads
	\begin{align*}
		\sum_{n=0}^M &\Delta t\sum_{i=1}^N \Delta x_i \frac{\rho_i^{n+1} \log \rho_i^{n+1} - \rho_i^n\log \rho_i^n}{\Delta t} +\frac{\eta_i^{n+1} \log \eta_i^{n+1} - \eta_i^n\log \eta_i^n}{\Delta t}\\
		 &+\,\sum_{n=0}^M\Delta t\sum_{i=1}^{N-1}\!\Delta x_{i+1/2}\left[\nu\, |\d U_\ihalf^{n+1}|^2 
		\,+\, \frac{\epsilon}{4}\left(  |\d \rho_\ihalf^{n+1}|^2
		\,+\,  |\d \eta_\ihalf^{n+1}|^2\right)
				\right]\,\leq\, C,
	\end{align*}
	with $C>0$ as in Corollary \ref{cor:apriori}. The inequality follows from the convexity of $x (\log x - 1)$ since
	\begin{align*}
		\rho_i^{n+1} (\log \rho_i^{n+1} - 1) - \rho_i^n (\log \rho_i^n -  1)\leq  \log(\rho_i^{n+1}) (\rho_i^{n+1} - \rho_i^n).
	\end{align*}
	Multiplying this expression by $\Delta x_i / \Delta t$ and summing over $i=1,\ldots N$ and $n=0,\ldots, M$ we obtain 
	\begin{align*}
		\sum_{n=0}^M\Delta t \sum_{i=1}^N\Delta x_i \frac{\rho_i^{n+1} (\log \rho_i^{n+1} - 1) - \rho_i^n (\log \rho_i^n -  1)}{\Delta t} \leq  \sum_{n=0}^M\Delta t \sum_{i=1}^N\Delta x_i\log(\rho_i^{n+1}) \frac{\rho_i^{n+1} - \rho_i^n}{\Delta t}.
	\end{align*}
	The right-hand side is then substituted by the scheme and simplified along the lines of the proof of Lemma \ref{lem:entropy_control}. Since the computations are exactly the same we omit them here for brevity and only note that it is important to set
	\begin{align}
	\tilde \rho_\ihalf^{n+1} := \left\{
		\begin{array}{ll}
			\ds\dfrac{\rho_{i+1}^{n+1} -
                  \rho_i^{n+1}}{\log\rho_{i+1}^{n+1} - \log
                  \rho_i^{n+1}}, & \text{if $\rho_i^{n+1} \neq
                                   \rho_{i+1}^{n+1}$},
\\ \, \\
			\ds\dfrac{\rho_i^{n+1} + \rho_{i+1}^{n+1}}{2}, & \text{else},
		\end{array}
		\right.
\end{align}
	to obtain the right sign in the numerical artefacts in Eq. \eqref{eq:numerical_artifacts}, which now read
$$
	\left\{
	\begin{array}{l}
	\displaystyle
	(\rho_i^{n+1} - \tilde \rho_\ihalf^{n+1}) \D\log
          \rho_\ihalf^{n+1} \big(\nu(\d U{n+1})_{\ihalf}^+ + (\d
          V_1^{n})_{\ihalf}^+\big) \leq 0,
\\ \, \\
	\displaystyle
	(\rho_{i+1}^{n+1} - \tilde \rho_\ihalf^{n+1}) \D\log \rho_\ihalf^{n+1} \big(\nu (\d U^{n+1})_{\ihalf}^- + (\d V_1^{n})_{\ihalf}^-\big) \leq 0.
	\end{array}
	\right.
$$
Following the lines of the proof Lemma \ref{lem:entropy_control} and Corollary \ref{cor:apriori} we obtain the fully discrete a priori bounds
	\begin{align*}
		\sum_{n=0}^N \Delta t \sum_{i=1}^{N-1} \Delta x_{\ihalf} \left(\,|\d_x\rho_\ihalf^{n+1}|^2 + |\d_x\eta_\ihalf^{n+1}|^2 + \,|\d_x U_\ihalf^{n+1}|^2 \right) \leq C,
	\end{align*}
	for some constant $C>0$, where we used `$\d_x$' to denote the discrete spatial gradient as before.
	Again, an application of Aubin-Lions Lemma provides relative compactness in the space $L^2((0,T)\times (a,b))$ of the piecewise constant interpolations $\rho_h, \eta_h$. As above, the discrete gradients are uniformly bounded in $L^2((0,T)\times (a,b))$ and their weak convergence is a consequence of the Banach-Alaoglu Theorem. Identifying the limits as a weak solution to system \eqref{eq:crossdiffsystem} is shown in the same way as above and we leave it as an exercise for the reader.
\end{proof}

\begin{rem}[Explicit Scheme]
	We do not consider an explicit scheme here since its analysis
        is much more complicated due to the lack of uniform
        estimates. Indeed, an explicit scheme  requires a CFL
        condition on the time step which appears in the stability
        analysis or the energy estimate provided in Lemma
        \ref{lem:entropy_control}. In our case, it leads to
$$
\frac{\rho^{n+1} \log \rho^{n+1} - \rho^n\log \rho^n}{\Delta
  t} = \frac{\rho^{n+1}  - \rho^n}{\Delta
  t} \left( \log \rho^n - 1\right) \,+\,   \frac{(\rho^{n+1}  - \rho^n)^2}{\left|\rho^{n+1/2}\right|^2\,\Delta
  t}, 
$$
where $\rho^{n+1/2}$ belongs to the interval $[\rho^n,
\rho^{n+1}]$ or  $[\rho^{n+1},
\rho^{n}]$. The control of this reminder term would require some
lower bounds estimates on the density $\rho$ (see for instance \cite{filbet-herda}). 

\end{rem}

\section{Numerical examples and validation\label{sec:numerical_results}}
In this section we perform some numerical simulations of system \eqref{eq:crossdiffsystem} using our scheme, Eqs. \eqref{eq:scheme}. In Section \ref{sec:error_numerical_convergence_order} we test our scheme by computing the error between the numerical simulation and a benchmark solution on a finer grid. Furthermore we determine the numerical convergence order. In Section \ref{sec:stationary_states} we compute the numerical stationary states of system \eqref{eq:crossdiffsystem} and discuss the implication of different cross-diffusivities and  the self-diffusivities, respectively.

Let us note here, that in the case of no regularising porous-medium
diffusion, \emph{i.e.} $\epsilon=0$, and certain singular potentials
the stationary states of system \eqref{eq:crossdiffsystem} are even
known  explicitly \cite{CHS17}. This allows us to compare the
numerical solution directly to the analytical stationary state in  Section \ref{sec:CHS17}.

Throughout the remainder of this section we apply the scheme \eqref{eq:scheme} to system \eqref{eq:crossdiffsystem}
using different self-diffusions, $\epsilon$,  and cross-diffusions, $\nu$.

\subsection{Error and numerical order of convergence\label{sec:error_numerical_convergence_order}}
This section is dedicated to validating our main convergence result, Theorem \ref{thm:convergence}. Due to the lack of  explicit solutions we compute the numerical solution on a fine grid and consider it  a benchmark solution. We then compute numerical approximations on coarser grids and study the error in order to obtain the numerical convergence order.

In all our simulations we use a fourth order Runge-Kutta scheme to solve the ordinary differential equations Eqs. \eqref{eq:scheme} with initial data Eqs. \eqref{eq:initial_data}. The discrepancy between the benchmark solution and numerical solutions on coarser grids is measured by the error
\begin{align*}
	e:=\left(\Delta t \sum_{k=1}^M \left(\Delta x \sum_{i=1}^N |\rho_{\mathrm{ex}}(t_k,x_i) - \rho(t_k, x_i)|^2 + |\eta_{\mathrm{ex}}(t_k,x_i) - \eta(t_k, x_i)|^2\right)\right)^{1/2}.
\end{align*}
Here $\rho_{\mathrm{ex}}$, $\eta_{\mathrm{ex}}$ denote the benchmark solutions. We use this quantity to study the convergence of our scheme as the grid size decreases. 
\subsubsection{No non-local interactions}
Let us begin with the purely diffusive system. We consider system \eqref{eq:crossdiffsystem}  without any interactions, \emph{i.e.}  $W_{ij}\equiv 0$, for $i,j=1,2$, and we choose $\nu=0.5$ and $\epsilon=0.1$. 

In Figure \ref{fig:purelydiffusive} we present the convergence result as we decrease the grid size. We computed a benchmark solution on a grid of $\Delta x = 2^{-10}$ on the time interval $[0,10]$ with $\Delta t = 0.05$. Figure \ref{fig:symmetric_initial_data_only_diffusion} shows the convergence for symmetric initial data whereas Figure \ref{fig:asymmetric_initial_data_only_diffusion} shows the convergence of the same system in case of asymmetric initial data. In both cases we overlay a line of slope one and we conclude that the numerical convergence is of order one.

\begin{figure}[ht!]
	\centering
	\subfigure[Symmetric initial data. $\rho_0(x)=\eta_0(x)= \mathds{1}_{[7,10]}$.]{
		\includegraphics[width=0.45\textwidth]{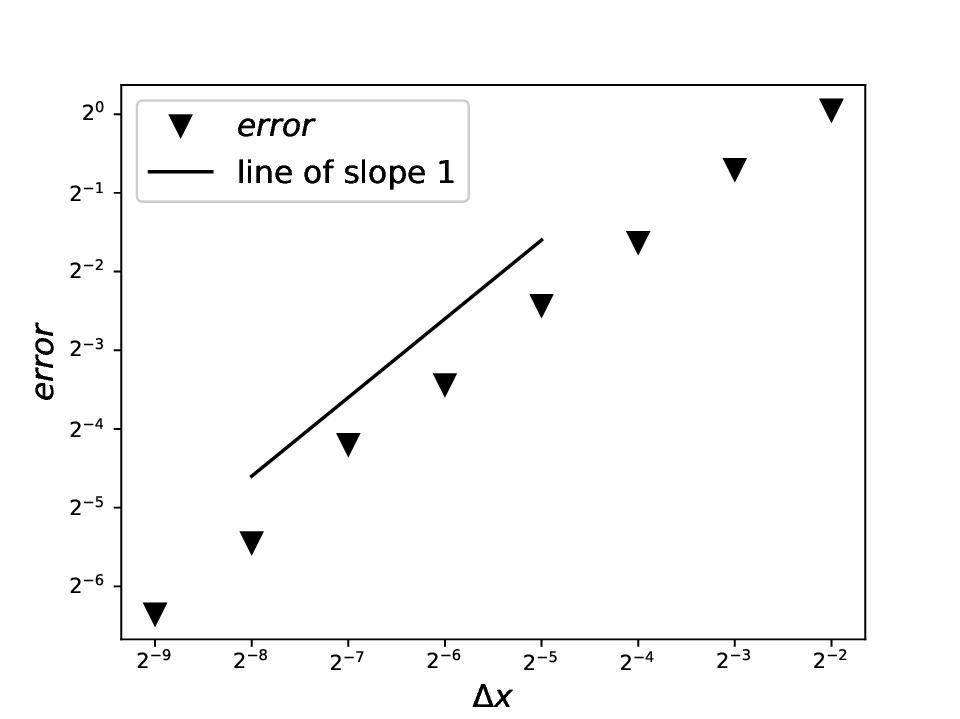}
		\label{fig:symmetric_initial_data_only_diffusion}
	}
	\hspace{1em}
	\subfigure[Asymmetric initial data. $\rho_0(x)=\mathds{1}_{[5,7]}$ and $\eta_0(x)=\mathds{1}_{[10,12]}$.]{
		\includegraphics[width=0.45\textwidth]{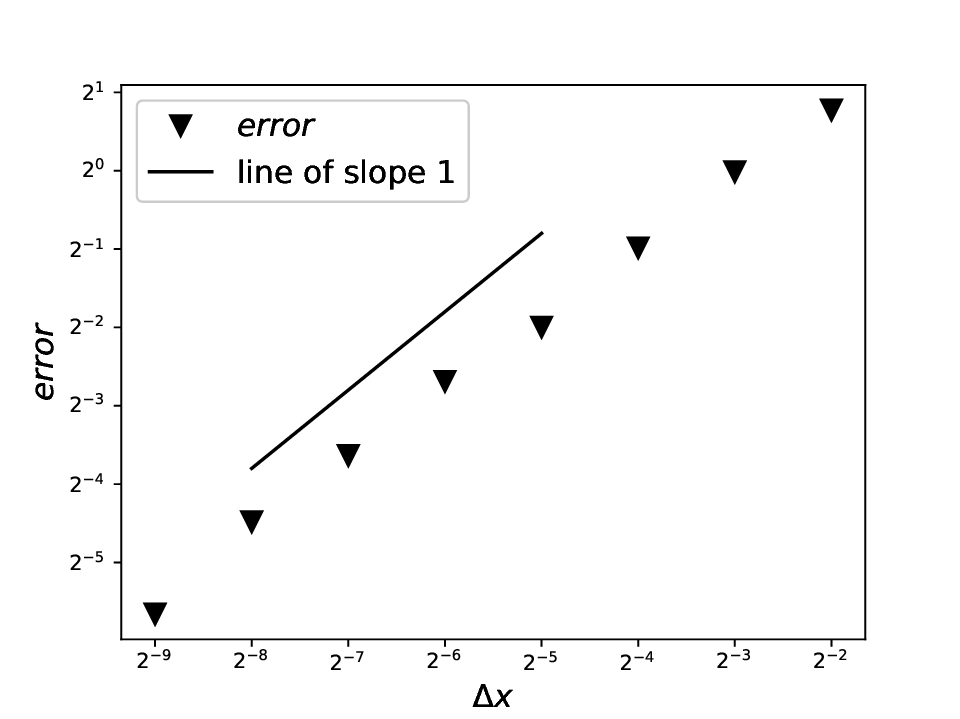}
	\label{fig:asymmetric_initial_data_only_diffusion}
	}
	\caption{In the purely diffusive system all interaction kernels are set to zero. Both graphs show the convergence to the benchmark solution. The triangular markers  denote the discrepancy between the numerical solution and the benchmark solution. A line of slope one is superimposed for the ease of comparison. On the left we start the system with symmetric initial conditions, on the right we start with asymmetric initial data. In both cases the scheme has numerical convergence order 1.}
	\label{fig:purelydiffusive}
\end{figure}

\subsubsection{Gaussian cross-interactions\label{sec:numeric_gaussians}}
Next, we add non-local self-interaction and cross-interactions. We choose smooth Gaussians with different variances. These potentials, like the related, more singular Morse potentials, are classical in mathematical biology since oftentimes the availability of sensory information such as sight, smell or hearing is spatially limited \cite{OCBC06, CMP13, CHM14}. For the intraspecific interaction we use
\begin{align*}
	W_{11}(x) = W_{22}(x) = 1 - \exp\left(-\frac{|x|^4}{4\times 0.1}\right),
\end{align*}
while we choose
\begin{align*}
	W_{12}(x) = -W_{21}(x) = 1 - \exp\left(-\frac{|x|^2}{2\times 0.1}\right),
\end{align*}
for the interspecific interactions.

We consider system \eqref{eq:crossdiffsystem} with the diffusive coefficients $\nu = 0.4$ and $\epsilon\in\{0.1, 0.5\}$, and we initialise the system with
\begin{align*}
	\rho(x) = \eta(x) = c \big((s-6.5)(9.5-s)\big)^+,
\end{align*}
on the domain $[0,9]$. Here the constant $c$ is such that $\rho$ and $\eta$ have unit mass. Figure \ref{fig:gaussian_interactions_error} depicts the simulation with Gaussian kernels as interaction potentials. In Figures \ref{fig:gaussian_interactions_low_self_diffusion} \& \ref{fig:gaussian_interactions_high_self_diffusion} we present the error plots  corresponding to $\epsilon\in\{0.1,0.5\}$. Again we observe convergence to the reference solution with a first order accuracy in space. 
\begin{figure}[ht!]
	\centering
	\subfigure[Convergence to benchmark solution for individual diffusion constant $\epsilon=0.1$.]{
		\includegraphics[width=0.45\textwidth]{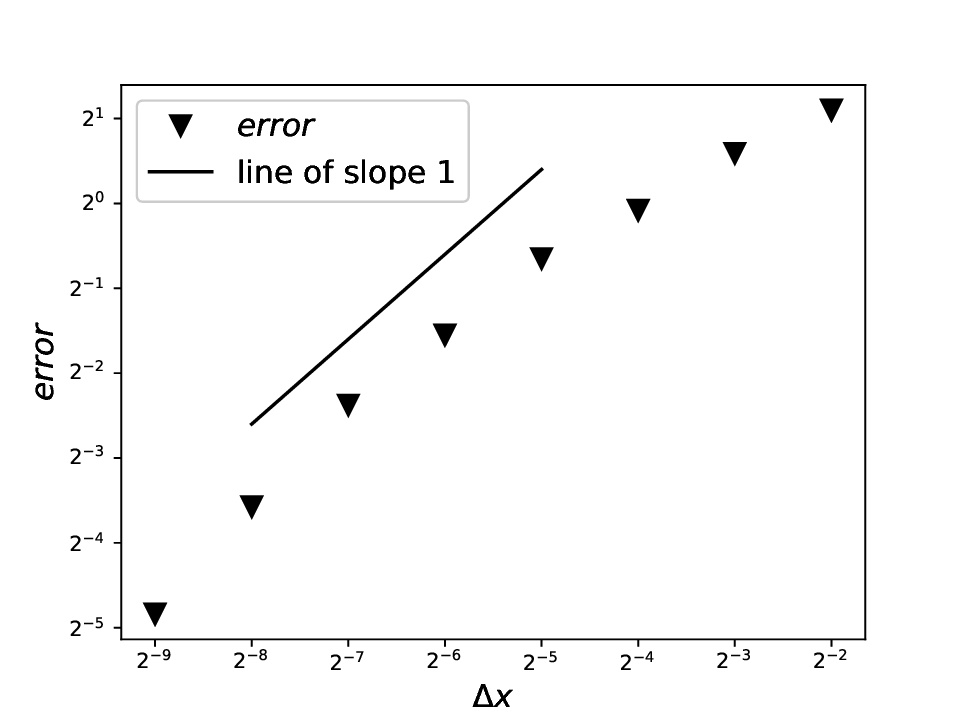}
		\label{fig:gaussian_interactions_low_self_diffusion}
	}
	\hspace{1em}
	\subfigure[Convergence to benchmark solution for individual diffusion constant $\epsilon=0.5$.]{
		\includegraphics[width=0.45\textwidth]{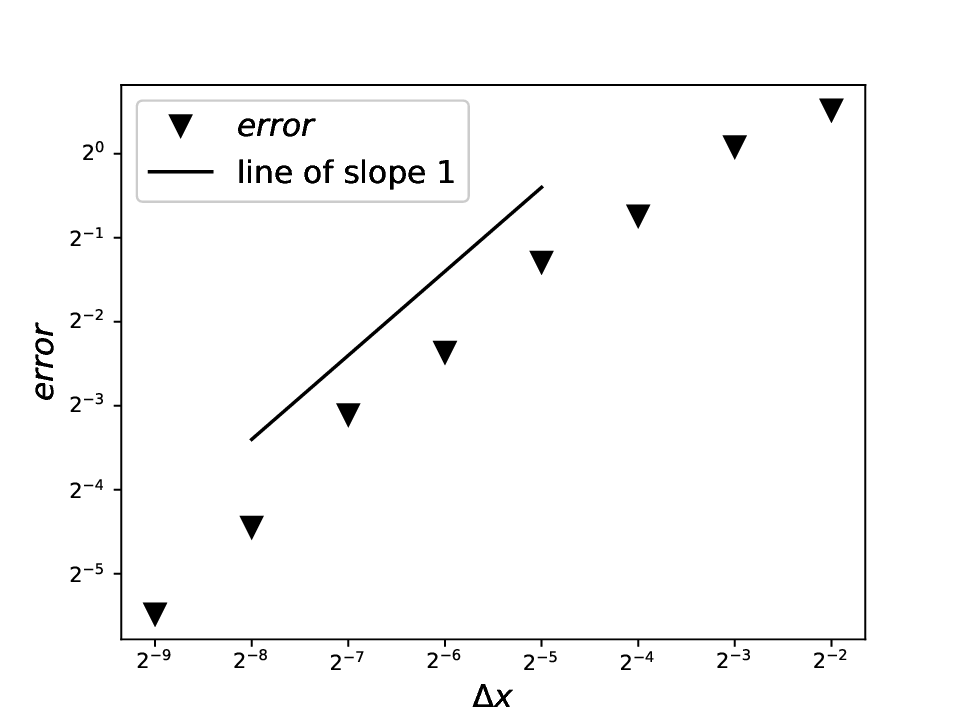}
		\label{fig:gaussian_interactions_high_self_diffusion}
	}
	\caption{We choose Gaussian interaction kernels of different strengths and ranges for the self-interaction and the cross-interaction, respectively. The graphs show the numerical convergence order in the cases of $\epsilon=0.1$ (left),  and  $\epsilon=0.5$ (right), respectively.}
	\label{fig:gaussian_interactions_error}
\end{figure}

\subsection{General behaviour of solutions and stationary states\label{sec:stationary_states}}
In this section we aim to study the asymptotic behaviour of system \eqref{eq:crossdiffsystem} numerically.  Let us begin by going back to the set up of Section \ref{sec:numeric_gaussians}. We note that the potentials were chosen in such a way that there is an attractive intraspecific force, and the cross-interactions are chosen as attractive-repulsive explaining the segregation observed in Figure \ref{fig:gaussian_interactions_left}. For larger self-diffusivity, $\epsilon=0.5$, we see that some mixing occurs. In the absence of any individual diffusion we would have expected adjacent species with a jump discontinuity at their shared boundary \cite{CHS17}. However, this phenomenon is no longer possible as we have a control on the gradients of each individual species, by Lemma \ref{cor:apriori}, rendering jumps impossible thus explaining the continuous transition. 
\begin{figure}[ht!]
	\centering
	\subfigure[Stationary state ($\epsilon=0.1$).]{
		\includegraphics[width=0.47\textwidth]{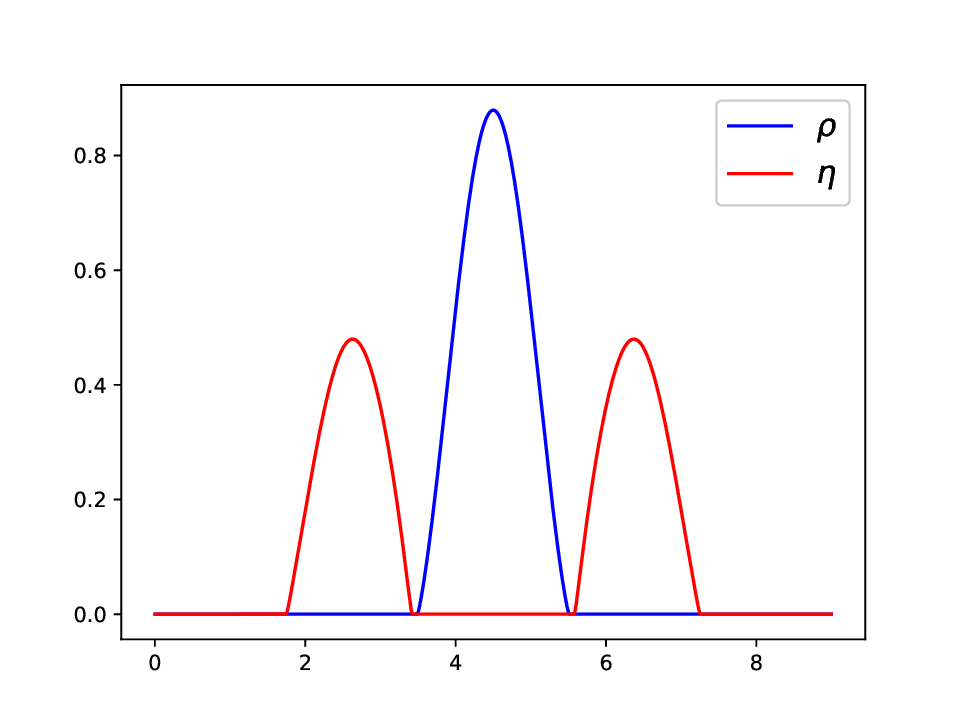}
		\label{fig:gaussian_interactions_left}
	}
	\subfigure[Stationary state ($\epsilon=0.5$).]{
		\includegraphics[width=0.47\textwidth]{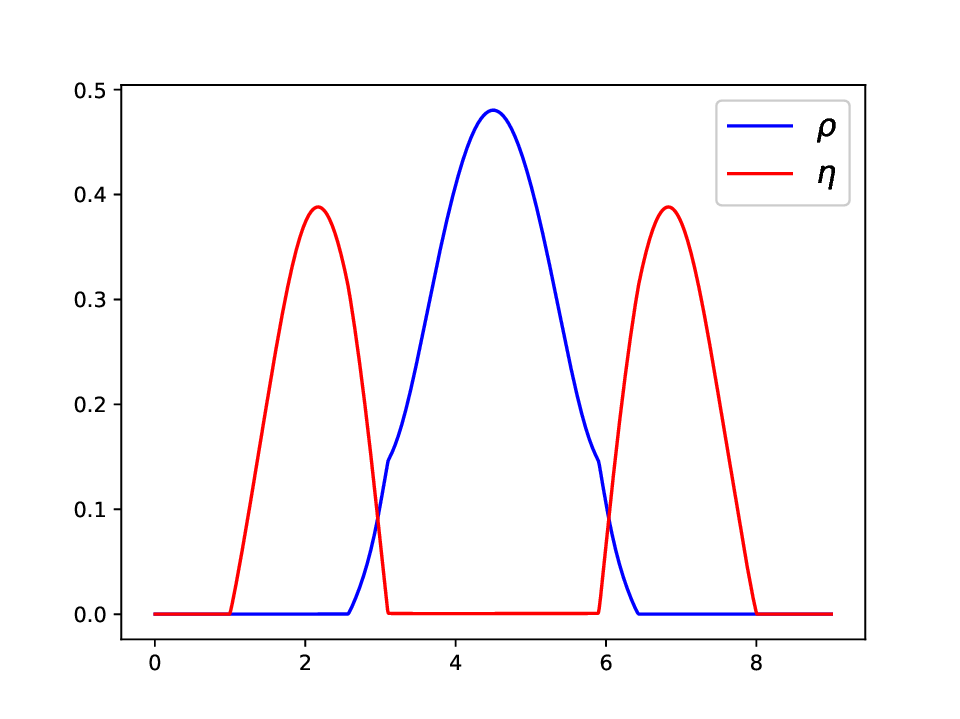}
		\label{fig:gaussian_interactions_right}
	}
	\caption{We choose Gaussian interaction kernels of different strengths and ranges for the self-interaction and the cross-interaction, respectively. The graphs correspond to the simulationed stationary states for self-diffusivities $\epsilon=0.1$, and $\epsilon=0.5$, respectively.}
	\label{fig:gaussian_interactions_steady_states}
\end{figure}

In the subsequent section we shall push our scheme even further by dropping the smoothness assumption on our potentials.

\subsubsection{\label{sec:CHS17_regularised}\label{sec:CHS17}Case of singular potentials}
In this section we go beyond the limit of what we could prove in this paper. On the one hand we consider more singular potentials and on the other hand we consider vanishing individual diffusion. We study system \eqref{eq:crossdiffsystem} for $\epsilon\in\{0,\,0.02,\,0.04,\, 0.06,\, 0.09\}$, and $\nu \in\{0.05, 0.5\}$. 
Here the potentials are given by
\begin{align*}
	W_{11}(x) = W_{22}(x) = x^2/2.
\end{align*}
for the self-interaction terms and  
\begin{align*}
	W_{12}(x) = |x| = \pm W_{21}(x).
\end{align*}
for the cross-interactions. The system is posed on the domain $[0,5]$
with a grid size of $\Delta x = 2^{-8}$. Note that the case of
$\epsilon=0$ corresponds to the absence of individual diffusion,  see
Figures \ref{fig:CHS_agreement_attrrep} \&
\ref{fig:CHS_agreement_attrrep_adjacent}. By virtue of Corollary
\ref{cor:apriori}, it is the individual diffusion that regularises the
stationary states, in the sense that we will not observe any
discontinuities in either $\rho$ or $\eta$. As we add individual
diffusion we can see the immediate regularisation. While stationary
states may still remain segregated, as it is shown in Figure
\ref{fig:CHS_agreement_attrrep_increasing_eps}, adjacent solutions are
not possible anymore (see Figure \ref{fig:CHS_agreement_attrrep_increasing_eps_adjacent}).
\begin{figure}[ht!]
	\centering
	\subfigure[In the case $\nu = 0.05$, $\epsilon=0$, we obtain a great agreement of the numerically computed stationary states and the analytical stationary states described in \cite{CHS17}.]{
	\includegraphics[width=0.47\textwidth]{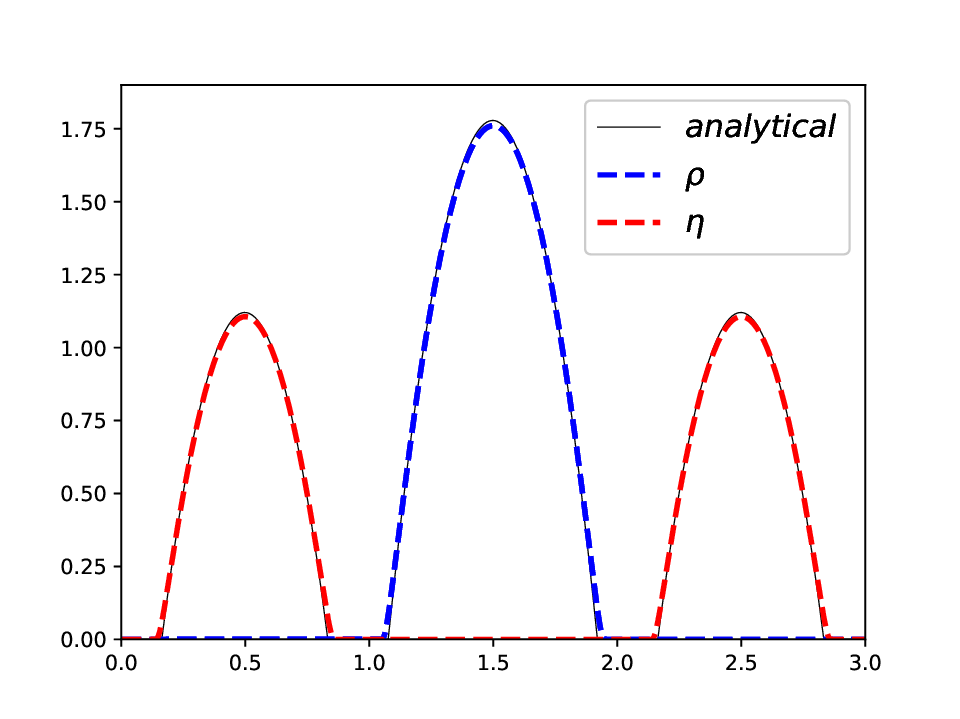}
	\label{fig:CHS_agreement_attrrep}
	}\hspace{0.2cm}
	\subfigure[Adding individual diffusion may still lead to segregated stationary states. However both species remain continuous as they mix.]{
	\includegraphics[width=0.47\textwidth]{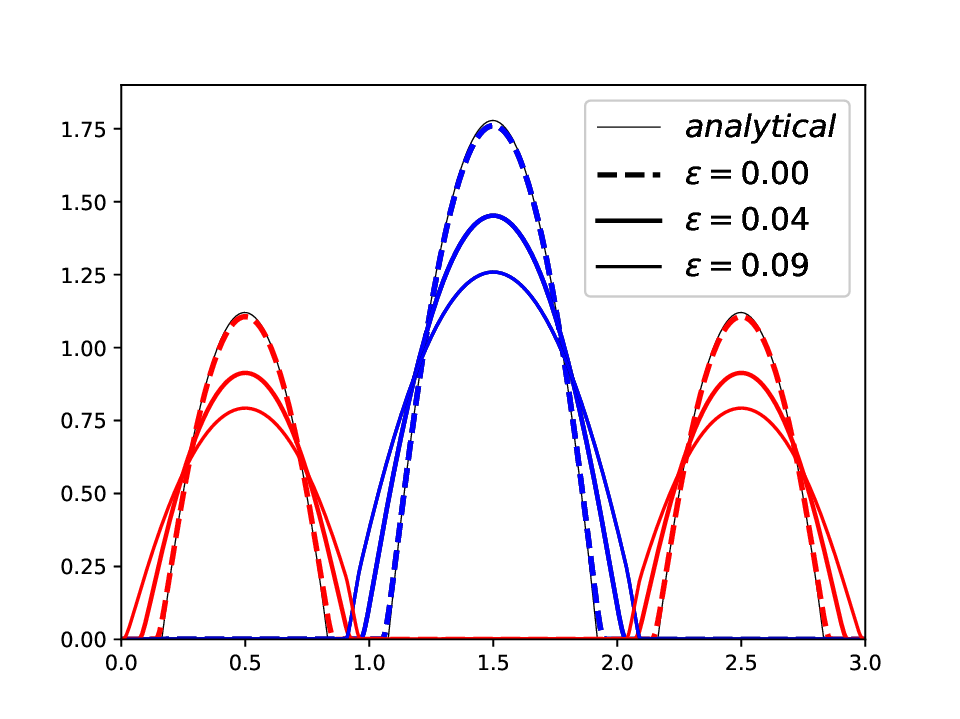}
	\label{fig:CHS_agreement_attrrep_increasing_eps}
	}
	\subfigure[The case $\nu = 0.5$, $\epsilon=0$ leads to adjacent stationary states. Again we see an excellent agreement of the numerical stationary states and the analytical ones \cite{CHS17}.]{
	\includegraphics[width=0.47\textwidth]{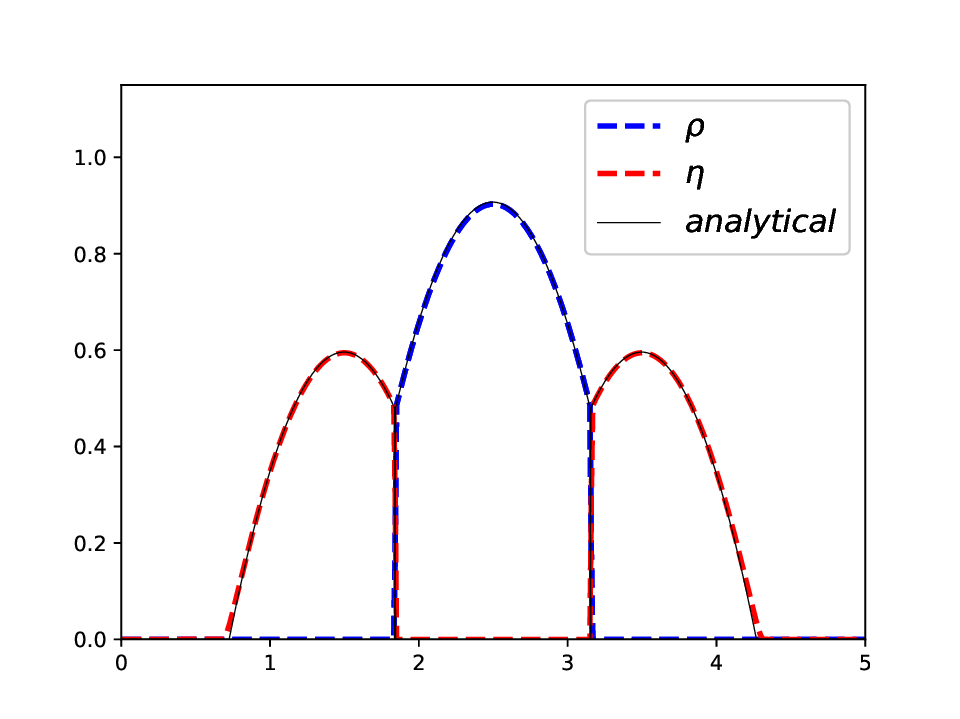}
	\label{fig:CHS_agreement_attrrep_adjacent}
	}\hspace{0.2cm}
	\subfigure[The regularising effect of the individual
        diffusion, by Corollary \ref{cor:apriori}, becomes apparent immediately. Instantaneously both species become continuous as they start to intermingle in a small region. This region grows as we keep increasing the individual diffusion.]{
	\includegraphics[width=0.47\textwidth]{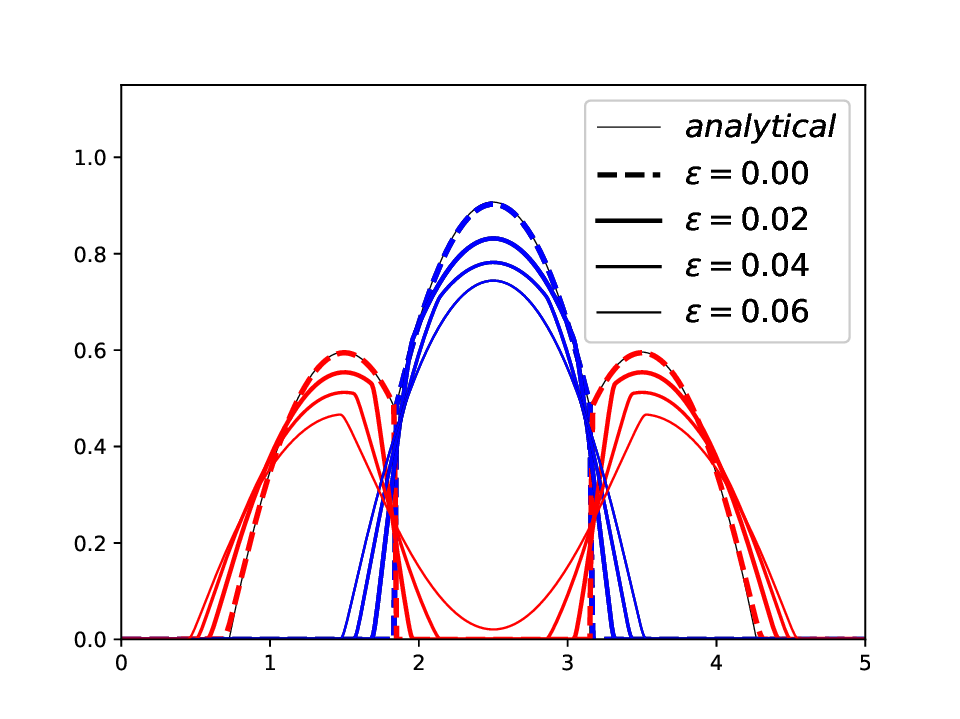}
	\label{fig:CHS_agreement_attrrep_increasing_eps_adjacent}
	}	
	\caption{We pushed our numerical scheme to see how it performs in regimes in which we are unable to prove convergence. We chose Newtonian cross-interactions in the attractive-repulsive case. The red curves denote the symmetric stationary states of $\eta$ while the blue curves are the stationary distributions of $\rho$. The different line widths and styles correspond to varying values of $\epsilon$.}
\end{figure}
In the case of attractive-repulsive interspecific interactions, \emph{i.e.}
\begin{align*}
	W_{12}(x) = |x| = -W_{21}(x).
\end{align*}
 we expect both species to segregate \cite{CHS17}. We initialise the system with the following symmetric initial data
\begin{align*}
	\rho(x) = \eta(x) = c \big((x-3)(5-x)\big)^+,	
\end{align*}
as symmetric initial data are known to approach stationary states \cite{CHS17}. Here the constant $c$ normalises the mass of $\rho$ and $\eta$ to one.

Figures \ref{fig:CHS_agreement_attrrep} \& \ref{fig:CHS_agreement_attrrep_adjacent} show our scheme performs well even in regimes we are unable to show convergence due to the lack of regularity in the potentials as well as the lack of regularity due to the absence of the porous medium type self-diffusion. While the schemes developed in \cite{CCH15, CHS17} are asymptotic preserving, their convergence to weak solutions of the respective equations could not be established. We reproduce the steady states of \cite{CHS17} that exhibit phase separation phenomena. Figure \ref{fig:additional_numerics} displays the stationary state in the case $\nu=0.09$ and $\epsilon=0$. Even in the case of no regularising individual diffusion and with Newtonian cross-interactions we observe a numerical convergence order of one.

\begin{figure}[ht!]
	\centering
	\subfigure[Strictly segregated stationary state in the absence of individual diffusion, $\epsilon=0$, and Newtonian potentials.]{
	\includegraphics[width=0.47\textwidth]{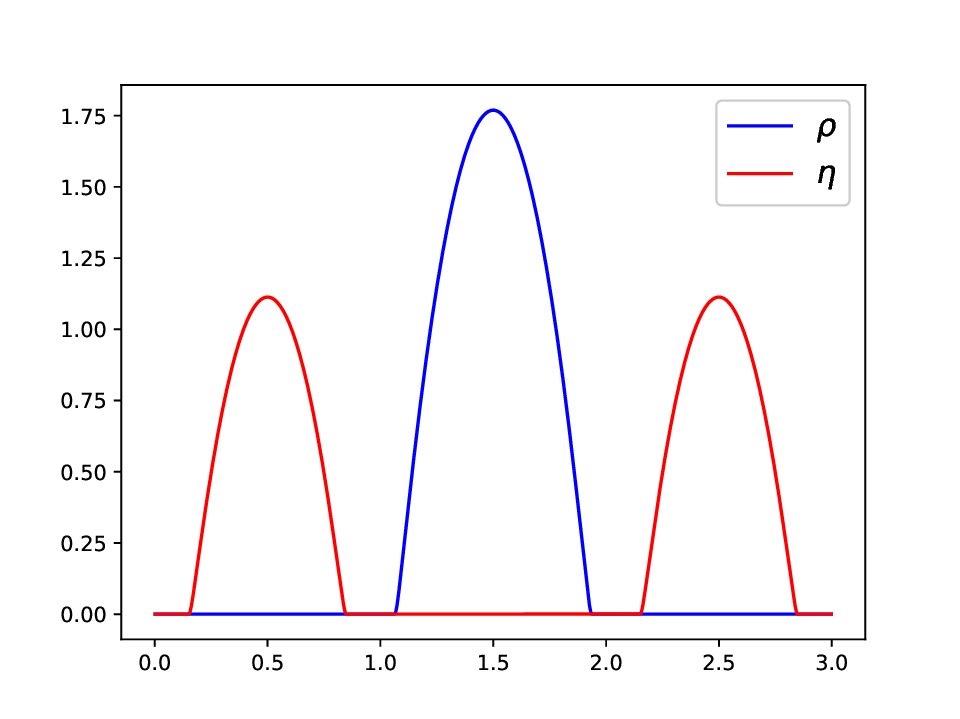}
	}
	\subfigure[Convergence to benchmark solution in the case of $\epsilon=0$ and Newtonian attractive-repulsive potentials.]{
	\includegraphics[width=0.47\textwidth]{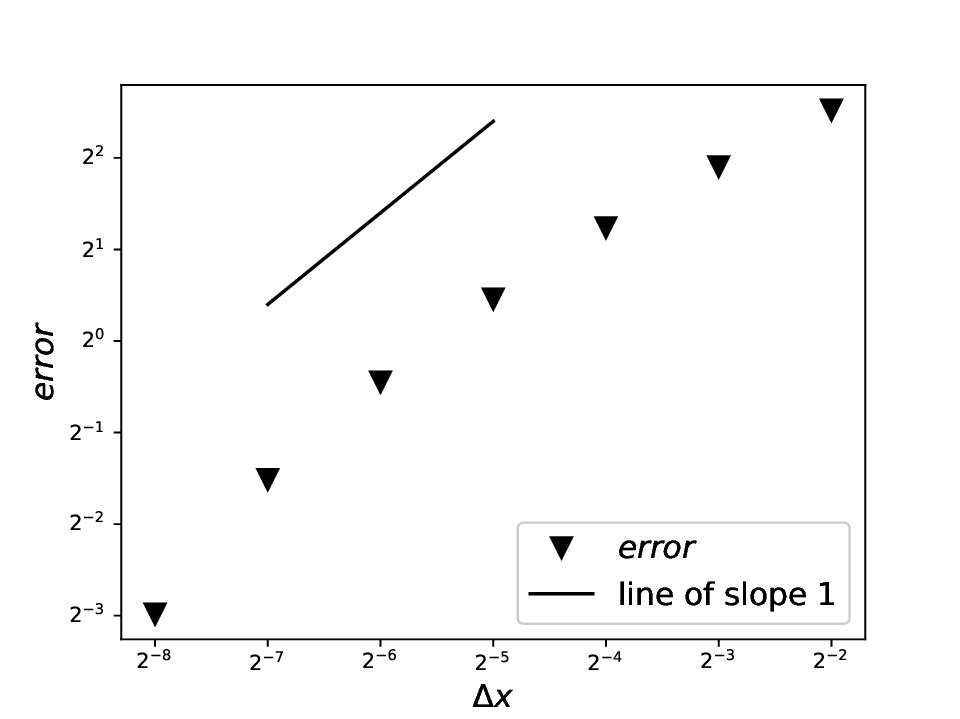}
	}
	\caption{Stationary state and numerical convergence order in the case of Newtonian attractive-repulsive cross-interactions. Even though our estimates fail in the analysis above we observe a numerical convergence order of one.}
	\label{fig:additional_numerics}
\end{figure}

In the case of attractive-attractive cross-interactions, \emph{i.e.} $W_{12}(x) = |x| = W_{21}(x)$, we observe an interesting phenomenon. Even in the absence of the individual diffusion, \emph{i.e.} $\epsilon=0$, some additional mixing occurs even though we expect sharp boundaries, see Figure \ref{fig:CHS_agreement_attrattr}, due to numerical diffusion. This is in contrast to the finite volume schemes proposed in \cite{CCH15, CHS17}.

\begin{figure}[ht!]
	\centering
\includegraphics[width=0.47\textwidth]{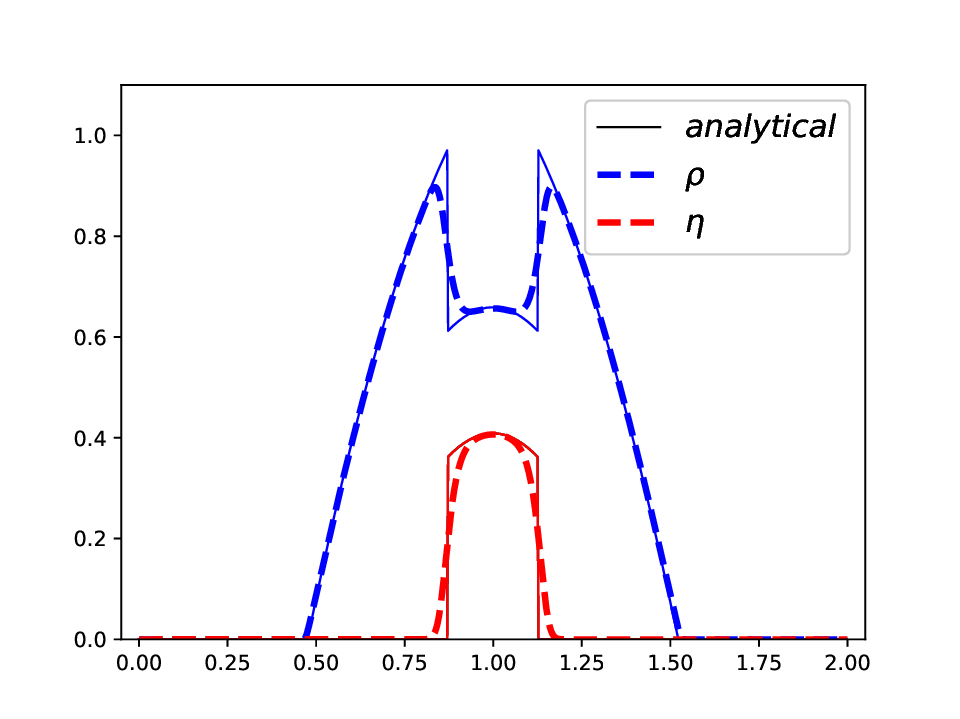}
	\caption{We choose $m_1 = 0.6$ and $m_2=0.1$ in order to be able to compare the stationary state with the explicit one given in \cite{CHS17}. We can see a strong resemblance between the numerical stationary state and the one obtained analytically. However there are some regimes of mixing due to numerical diffusion.}
\label{fig:CHS_agreement_attrattr}
\end{figure}

\subsection{Energy dissipation}
It is known that system \eqref{eq:crossdiffsystem} has a formal gradient flow structure, \emph{cf.} \cite{DFEF17}, whenever $W_{12} = W_{21}$. In this case, the evolution of system \ref{eq:crossdiffsystem} is such that it decays the energy functional
\begin{align*}
	\frak{E}(\rho, \eta) := \frac12 \iint \rho W_{11}\star \rho + \eta W_{22}\eta\, \d x + \iint \rho W_{12} \star \eta\, \d x + \frac12 \int \nu (\rho+\eta)^2 + \epsilon \rho^2 + \epsilon \eta^2 \, \d x.
\end{align*}

Here, we present two examples, one corresponding to the potential
\begin{align*}
	W_{ii}(x) = \frac{x^2}{2}, \quad \text{and} \quad W_{ij}(x) = |x|,
\end{align*}
for $i,j=1,2$ and $i\neq j$,
\emph{cf.} Figure \ref{fig:parabolamodulus}, the other one corresponding to
\begin{align*}
	W_{ii}(x) = 1 - \exp\left(-\frac{(4x)^2}{2}\right), \quad \text{and}\quad W_{ij}(x)= 1 + \exp\left(-(4x)^2\right) - \exp\left(-\frac23  (4x)^2\right),
\end{align*}
for $i,j=1,2$ and $i \neq j$, \emph{cf.} Figure \ref{fig:gaussianpotentials}.

In the first case, we choose the initial data 
\begin{align*}
	\rho(x) = c \big((x-2)(2.5-x)\big)^+,	\quad \text{and} \quad	\eta(x) = c \big((x-1)(0.5-x)\big)^+,
\end{align*}
where $c>0$ normalises the mass to one. Due to the long-range we observe an attraction of the two initial bumps until they meet. They begin to mix until they are completely merged. The graph in the last panel shows the decay of the associated energy to a constant one which corresponds to the energy of the steady state. It appears the energy is dissipated at an exponential rate but an analytic result for systems, corresponding to that of a single equation, \emph{cf.} \cite{CMcCV03}, is not known to our knowledge.
\begin{figure}
	\centering
	\subfigure[$t=0$]{
	\includegraphics[width=0.31\textwidth]{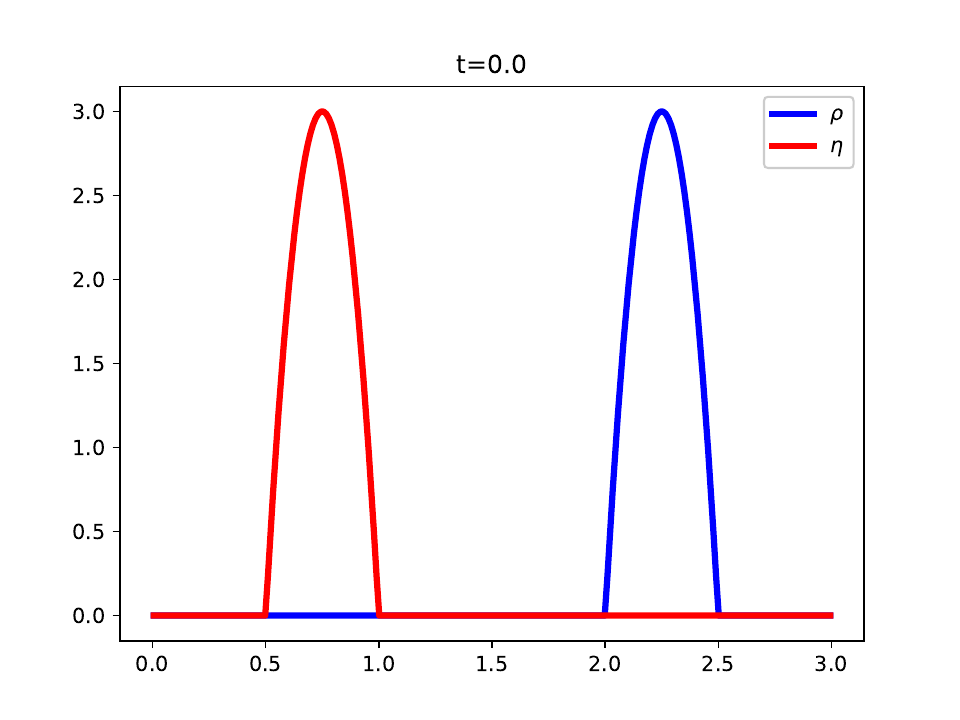}
	}
	\subfigure[$t=0.211$]{
	\includegraphics[width=0.31\textwidth]{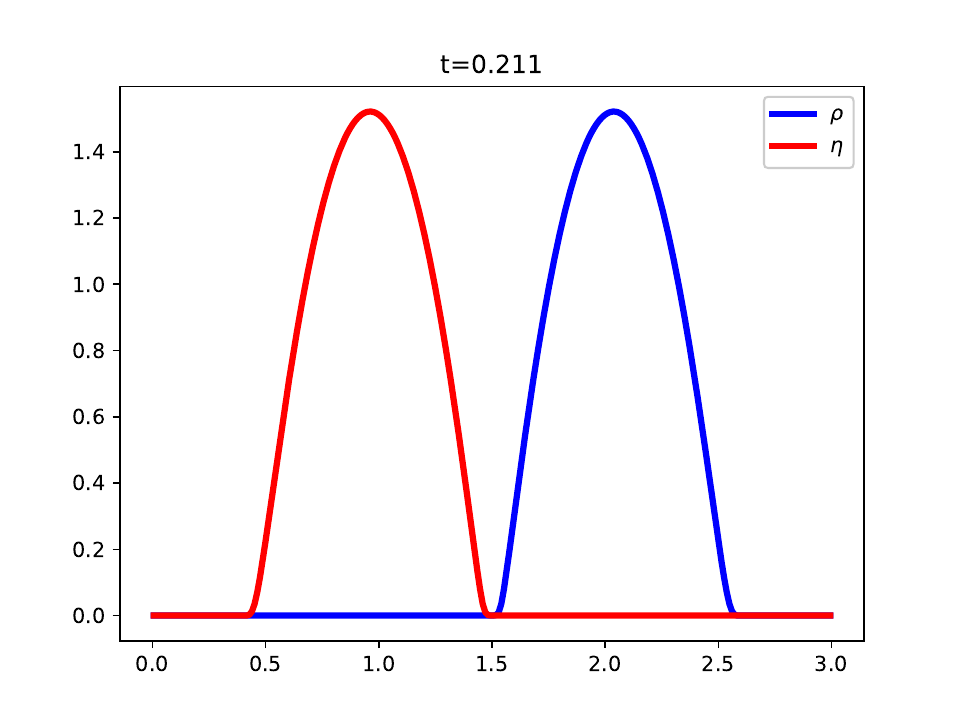}
	}
	\subfigure[$t=0.433$]{
	\includegraphics[width=0.31\textwidth]{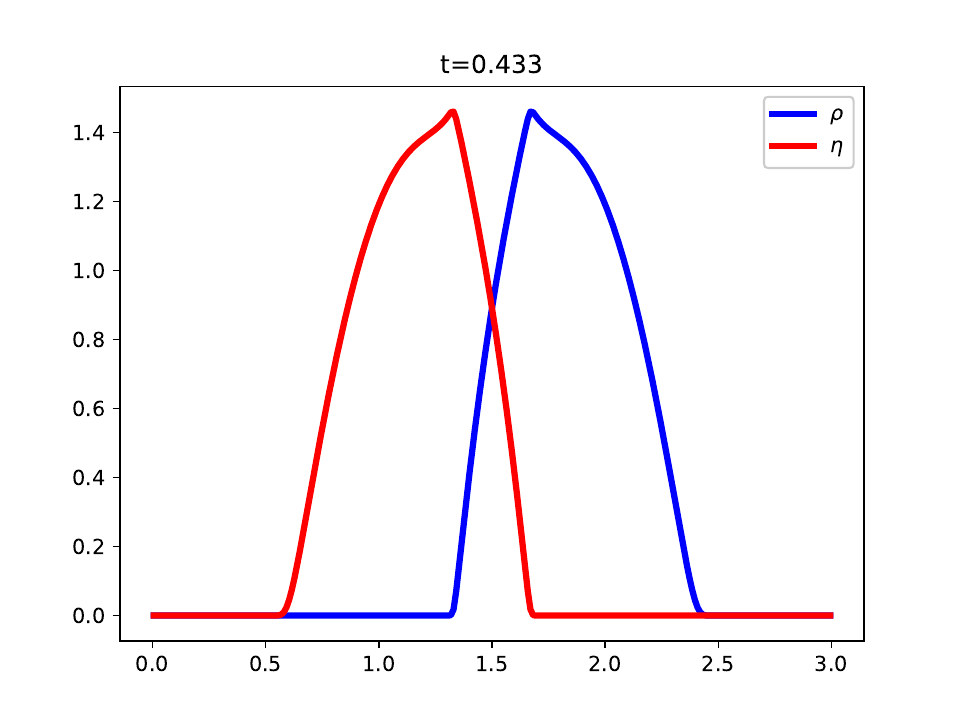}
	}
	\subfigure[$t=0.614$]{
	\includegraphics[width=0.31\textwidth]{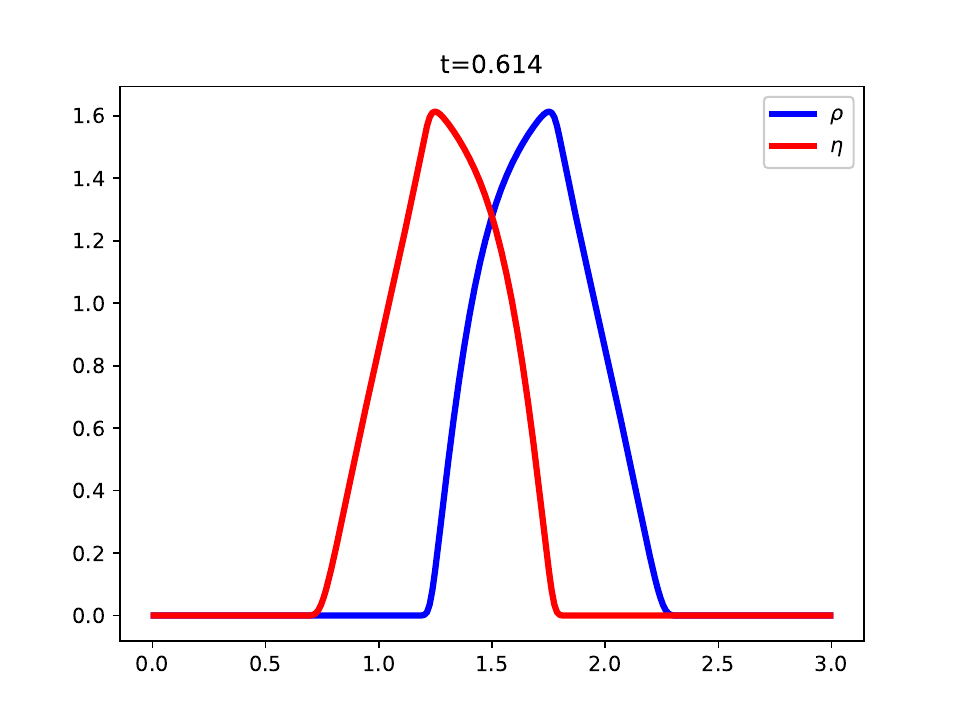}
	}
	\subfigure[$t=0.768$]{
	\includegraphics[width=0.31\textwidth]{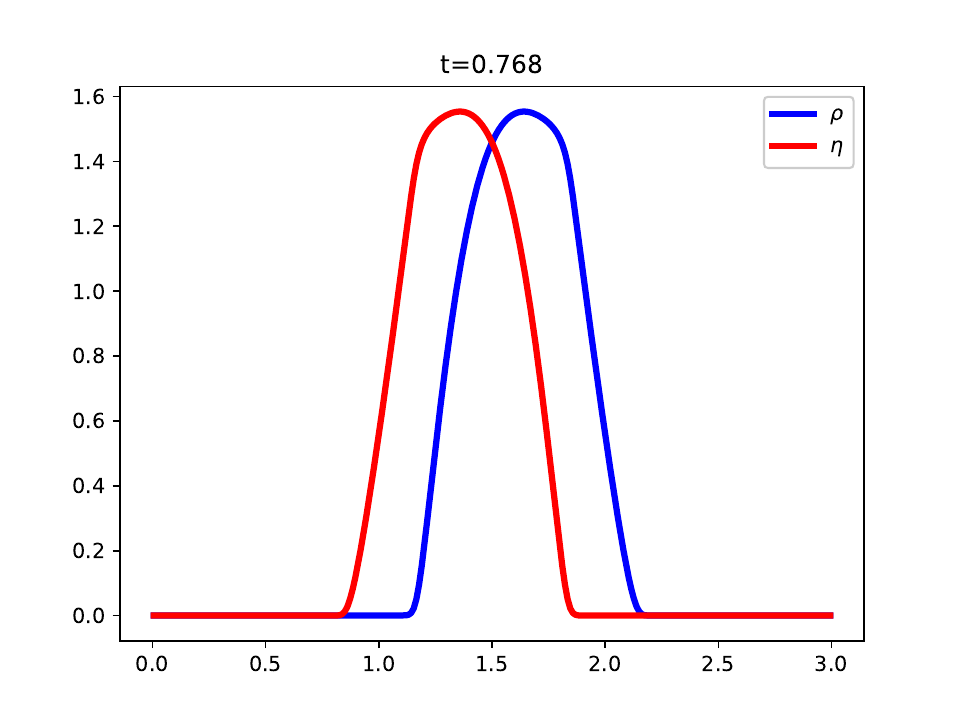}
	}
	\subfigure[$t=0.909$]{
	\includegraphics[width=0.31\textwidth]{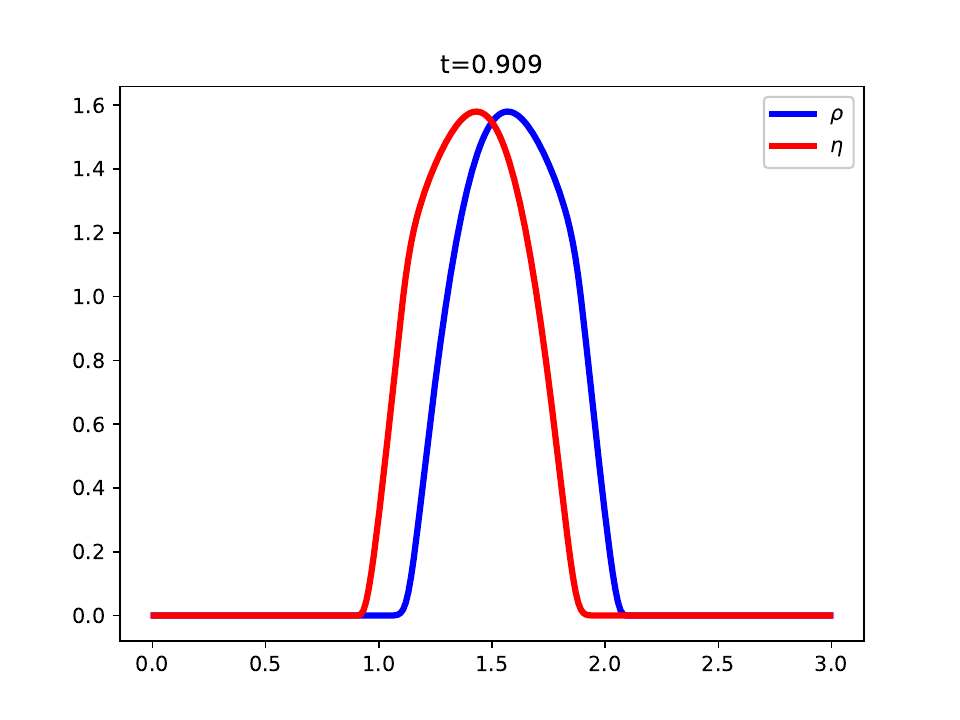}
	}
	\subfigure[$t=1.111$]{
	\includegraphics[width=0.31\textwidth]{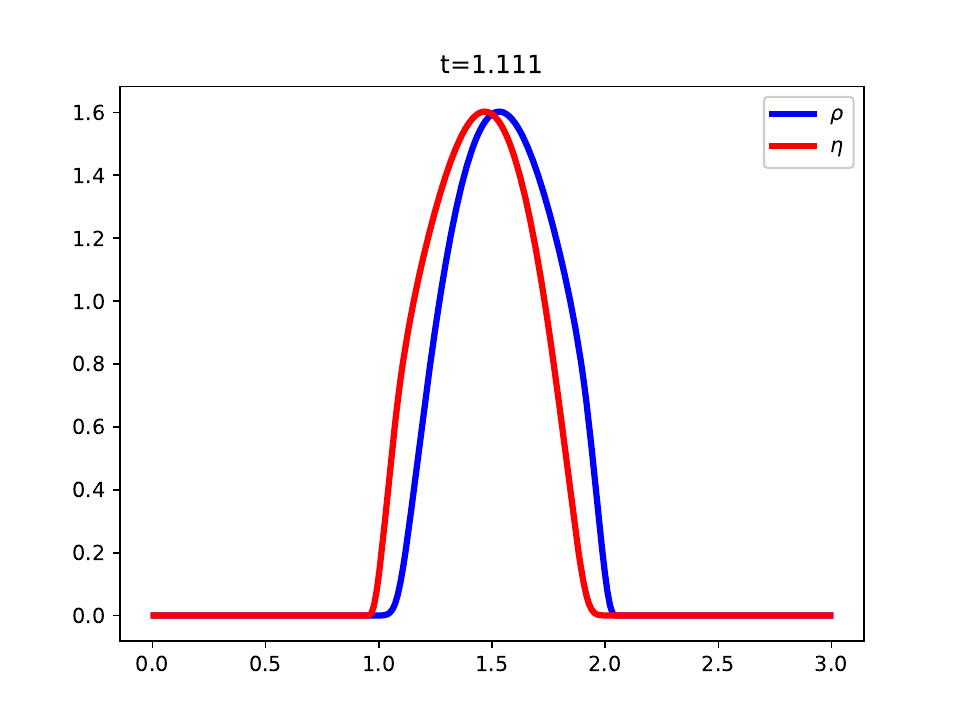}
	}
	\subfigure[$t = 1.41$]{
	\includegraphics[width=0.31\textwidth]{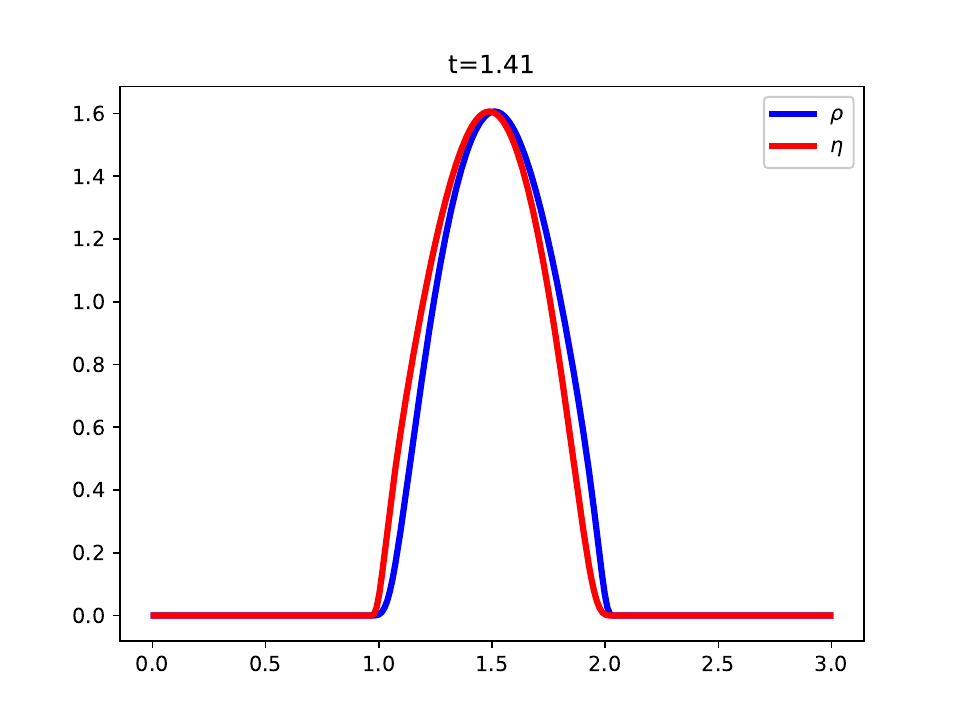}
	}
	\subfigure[Decay of the energy, $\frak{E}(\rho, \eta)$.]{
	\includegraphics[width=0.31\textwidth]{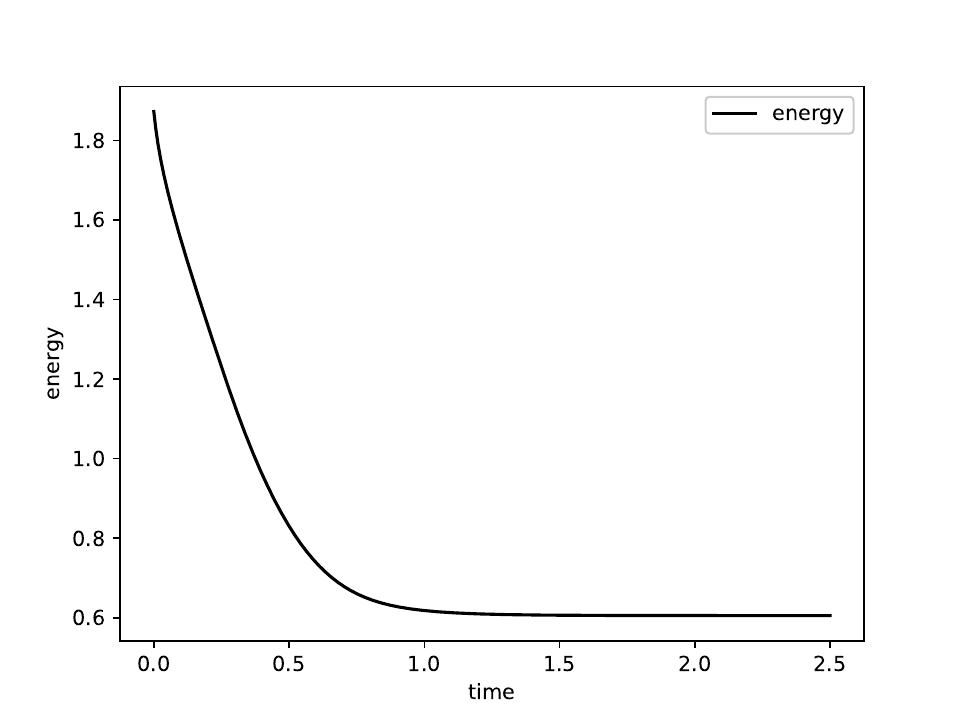}
	}
	\caption{Evolution of segregated initial data for attractive-attractive interactions with corresponding potentials $W_{ii}(x) = x^2/2$ and $W_{ij}(x) = |x|$.  The two blobs move towards each other. The associated energy appears to converge exponentially fast to a constant while the profiles merge.}
	\label{fig:parabolamodulus}
\end{figure}

In the second case, for Gaussian potentials, we change the computational domain to $(0,\pi)$, for convenience. We choose the initial data
\begin{align*}
	\rho(x) = \sin(2x)^2, \quad \text{and} \quad \cos(2x)^2,
\end{align*}
\emph{cf.} Figure \ref{fig:gaussianpotentials}. We observe the formation of nearly segregated clusters which, as the evolution continues, as begin to merge due to the nonlocal interaction. However, the short-range cross-interaction is working against this trend which explains that the evolution slows down just before the merging, a phenomenon which is also observed in meta-stability. After the 5 clusters have merged into three the profile stabilises which is reflected in the evolution of the energy, \emph{cf.} graph in the panel. We still observe a decay, however, after a strong initial decrease the energy decays much slower for a while before going to the constant corresponding to the stationary state. The explanation lies in the increase of the internal energy. Initially, we have $\rho + \eta \equiv 1$ which is a minimiser of the internal energy. Due to the nonlocal interactions the system wants to rearrange but at the cost of increasing the internal energy to allow for a decrease in the interaction energy. This beautifully portrays the interplay of local and nonlocal effects. Similar effects are known in the context of meta-stability. 

\begin{figure}
	\centering
	\subfigure[$t=0.0$]{
	\includegraphics[width=0.31\textwidth]{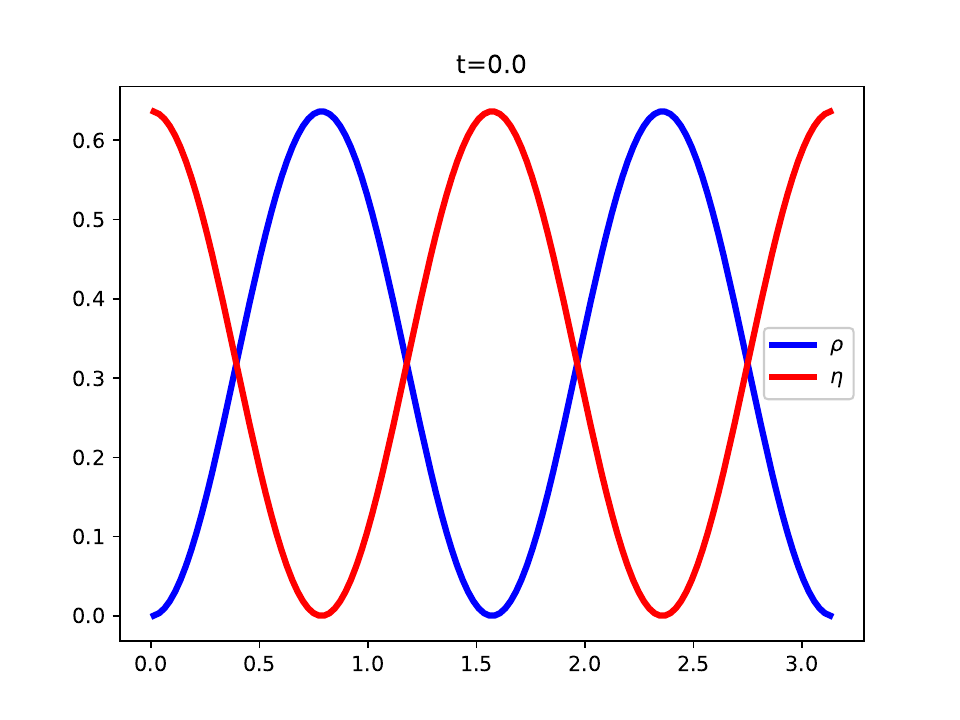}
	}
	\subfigure[$t=1.05$]{
	\includegraphics[width=0.31\textwidth]{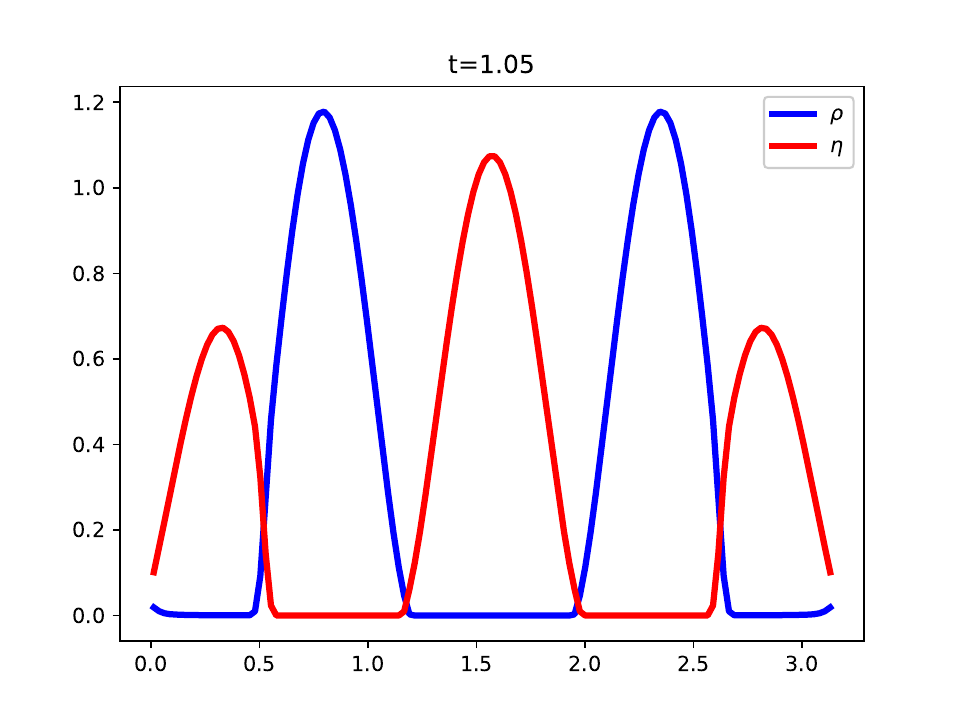}
	}
	\subfigure[$t=5.396$]{
	\includegraphics[width=0.31\textwidth]{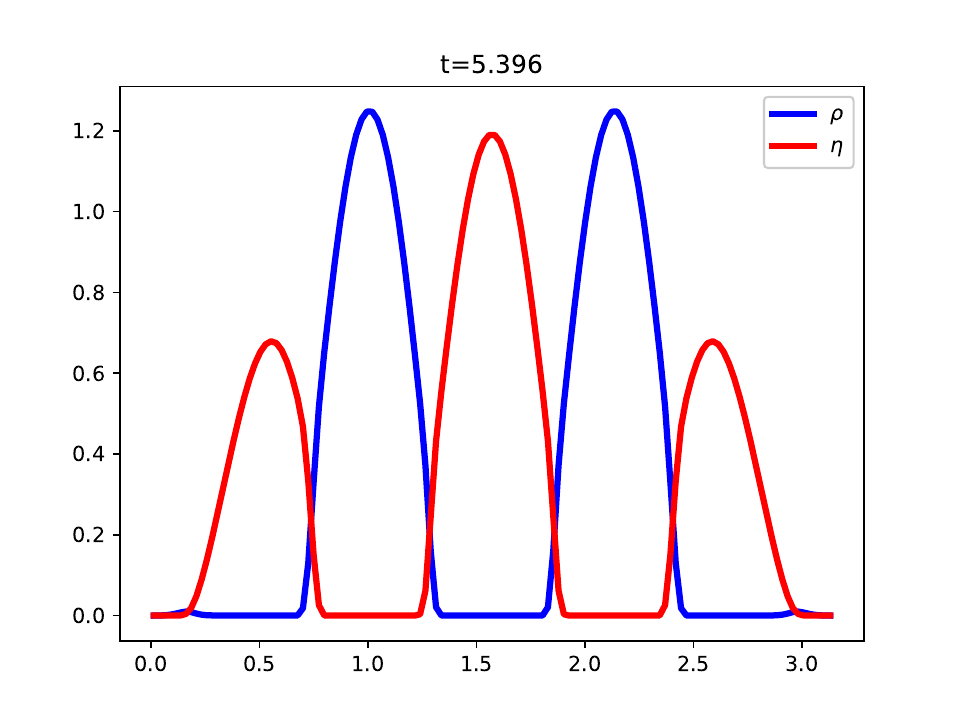}
	}
	\subfigure[$t=9.89$]{
	\includegraphics[width=0.31\textwidth]{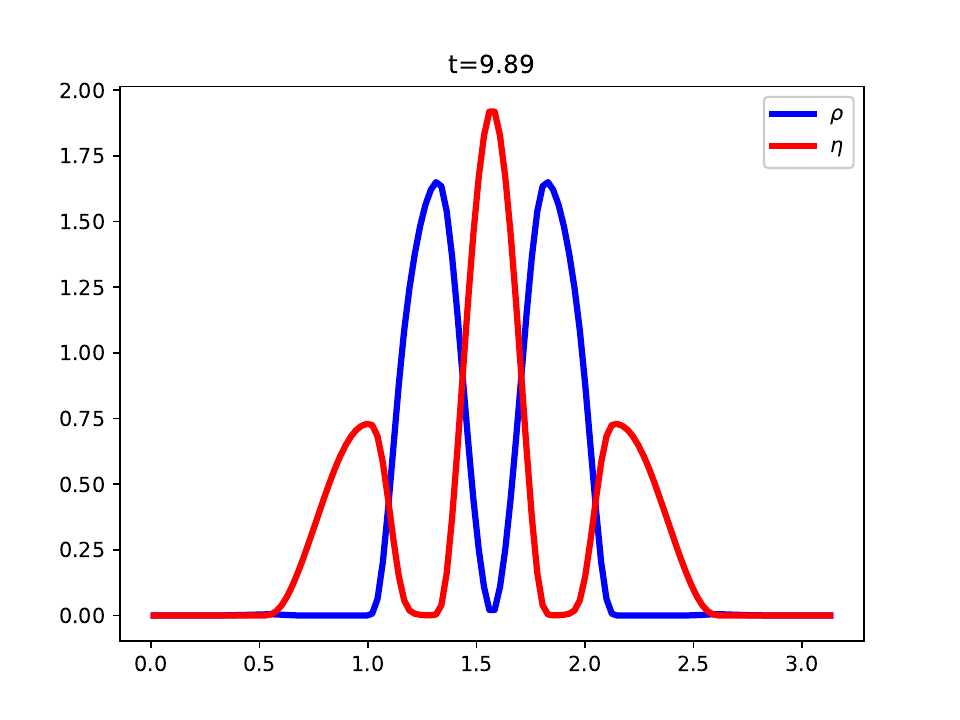}
	}
	\subfigure[$t=10.417$]{
	\includegraphics[width=0.31\textwidth]{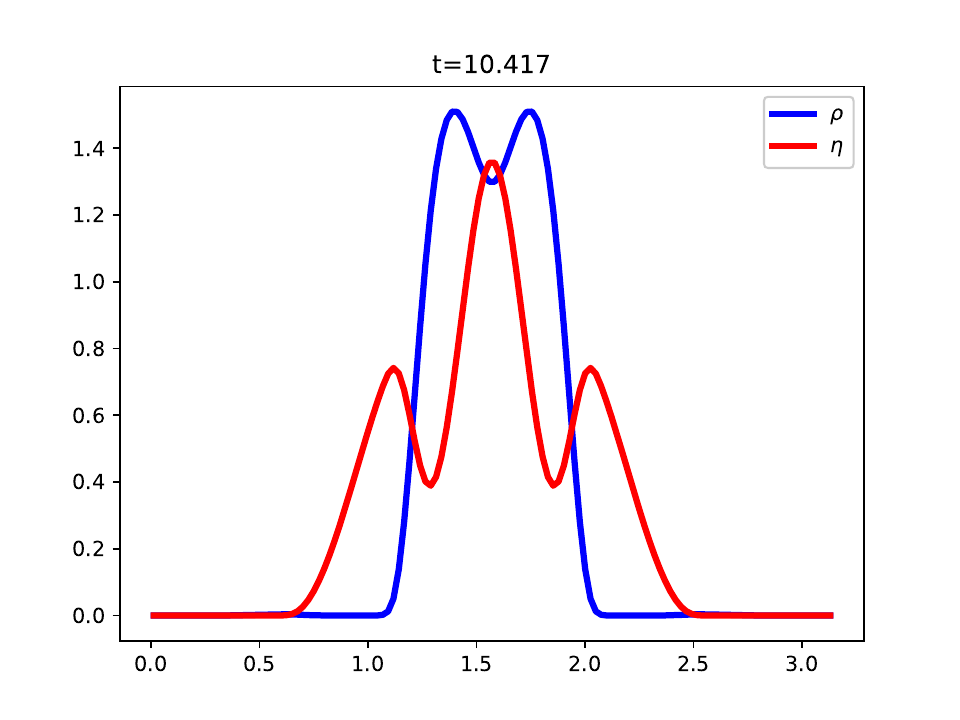}
	}
	\subfigure[$t=10.58$]{
	\includegraphics[width=0.31\textwidth]{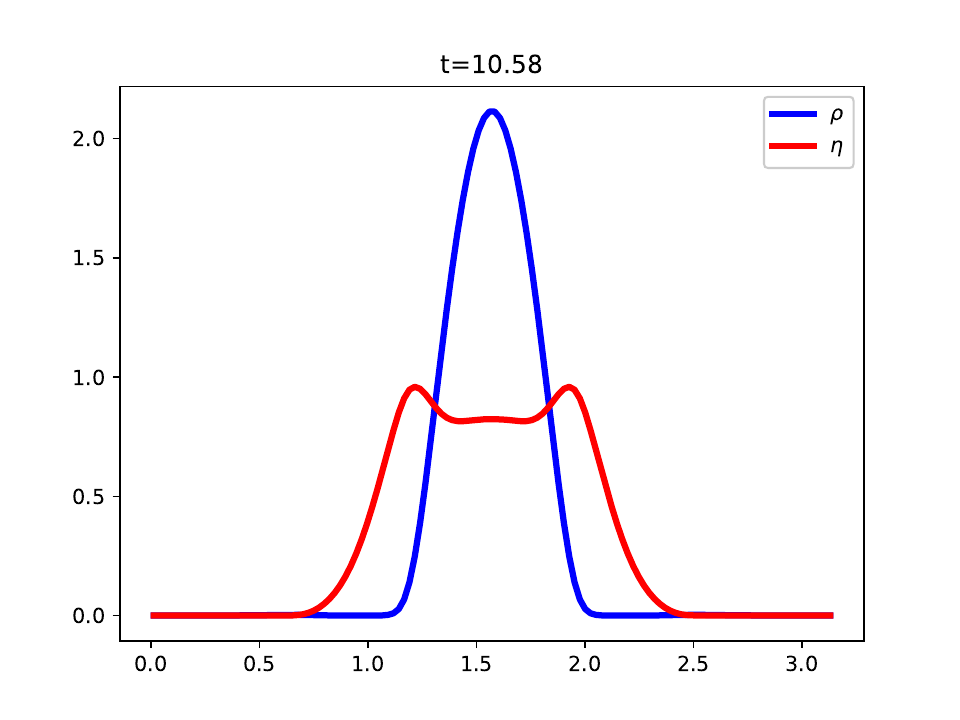}
	}
	\subfigure[$t=10.71$]{
	\includegraphics[width=0.31\textwidth]{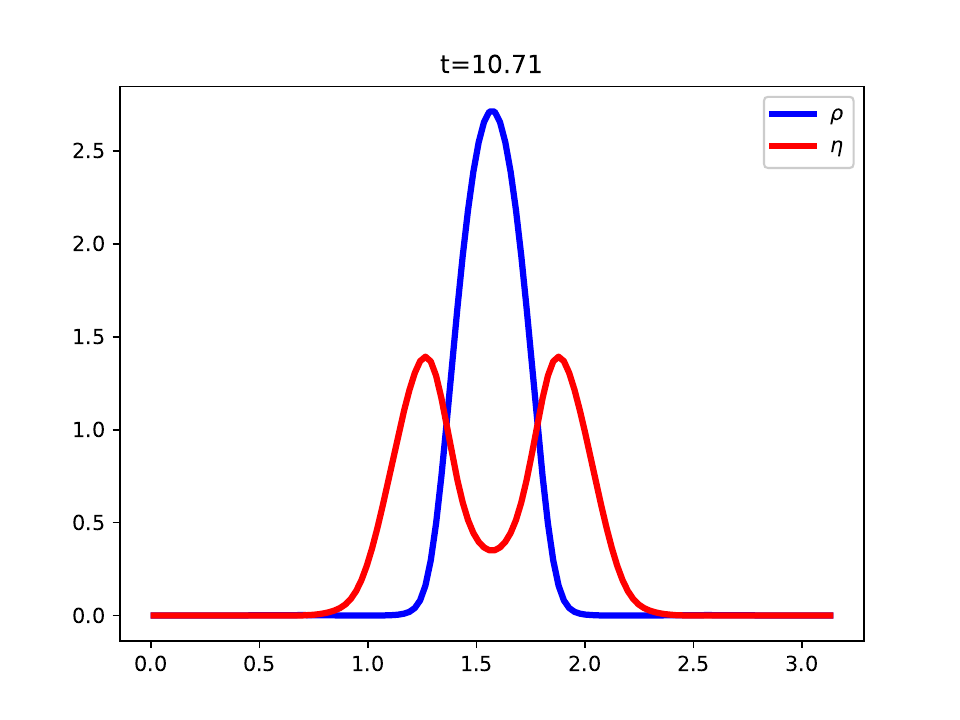}
	}
	\subfigure[$t = 19.54$]{
	\includegraphics[width=0.31\textwidth]{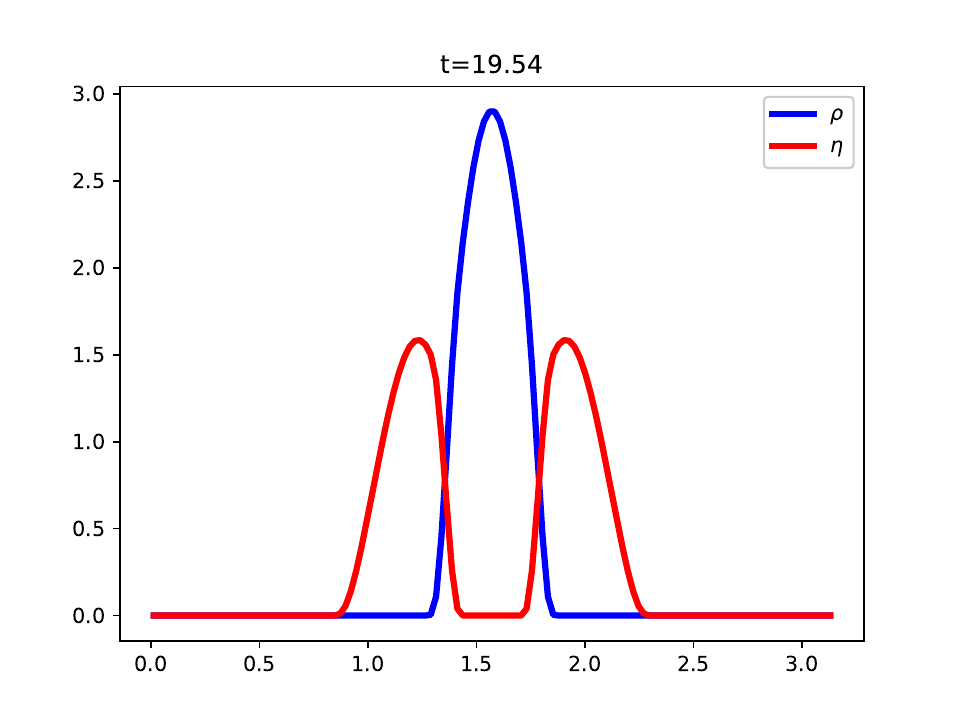}
	}
	\subfigure[Decay of the energy, $\frak{E}(\rho, \eta)$.]{
	\includegraphics[width=0.31\textwidth]{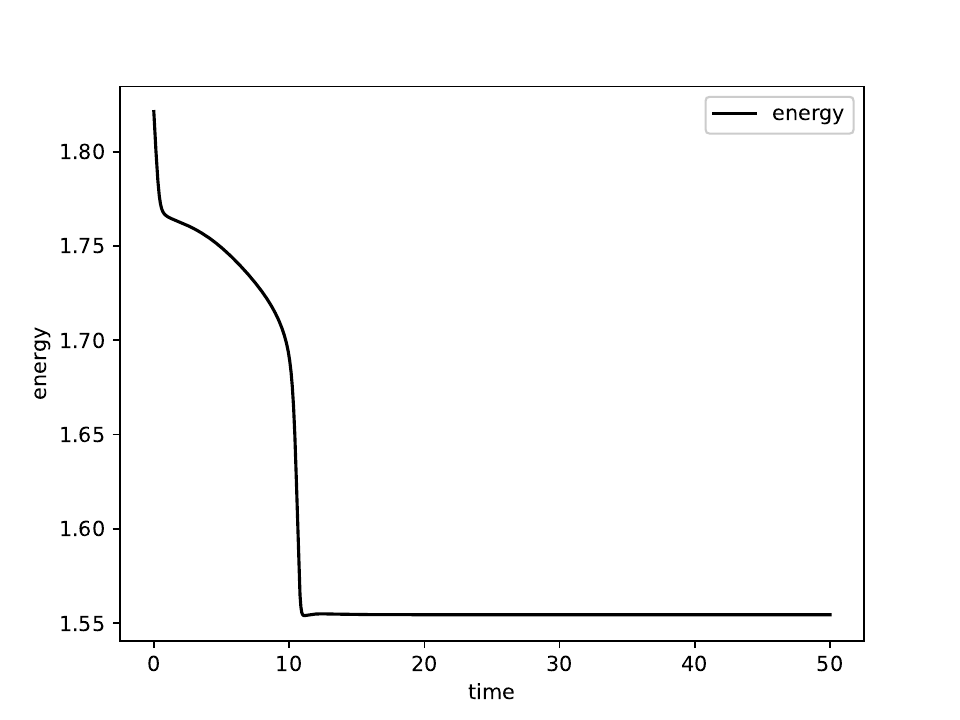}
	}
	\caption{Evolution of mixed initial data, $\rho = \sin(2x)^2$ and $\eta = \cos(2x)^2$ on the domain $(0, \pi)$. The interactions are linear combinations of Gaussians modelling self-attraction whereas the cross-interactions are short-range repulsive and long-range attractive. The associated energy decays abruptly at the merging and separation between aggregates. The slower decay of the energy before $t\approx 10$ is due to the trade off between the local (internal) energy and the nonlocal interaction energy. After the rearrangement of the five initial clusters to only three, the energy stabilises.}
	\label{fig:gaussianpotentials}
\end{figure}

\section{Conclusion}
In this paper we presented a finite volume scheme for a system of non-local partial differential equations with cross-diffusion. We were able to reproduce a continuous energy estimate on the discrete level for our scheme. These discrete estimates for the approximate solution are enough to get compactness results and we are able to identify the limit of the approximate solutions as a weak solution of the equation. We complement the analytical part with numerical simulations. These back up our convergence result and we are also able to apply the scheme in cases in which we cannot show convergence. To this end we pushed the scheme to regimes of singular potentials also lacking the regularising porous medium type self-diffusion terms. Comparing them with the explicit stationary states from \cite{CHS17} we conclude the scheme performs well even in  regimes it was not designed for.

%%%%%%%%%%%%%%%%%%%%%%%%%%%%%%%%%%%%%%%%%%%%%%%%%%%%%%%%%%%%%%%%%%%%%%%%%%%%%%%%%%%%%%%%%%%%
%%%%%%%%%%%%%%%%%%%%%%%%%%%%%%%%%%%%%%%%%%%%%%%%%%%%%%%%%%%%%%%%%%%%%%%%%%%%%%%%%%%%%%%%%%%%
\section*{Acknowledgements}
JAC was partially supported by EPSRC grant number EP/P031587/1.

%%%%%%%%%%%%%%%%%%%%%%%%%%%%%%%%%%%%%%%%%%%%%%%%%%%%%%%%%%%%%%%%%%%%%%%%%%%%%%%%%%%%%%%%%%%%
%%%%%%%%%%%%%%%%%%%%%%%%%%%%%%%%%%%%%%%%%%%%%%%%%%%%%%%%%%%%%%%%%%%%%%%%%%%%%%%%%%%%%%%%%%%%

\bibliographystyle{abbrv}
\bibliography{references}

\end{document}